\newtheorem{thm}{Theorem}[section]
\newtheorem{cor}[thm]{Corollary}
\newtheorem{lem}[thm]{Lemma}
\theoremstyle{definition}
\newtheorem{rem}[thm]{Remark}
\newtheorem{que}[thm]{Question}
\numberwithin{equation}{section}
\begin{document}
\title[On a  bijection between   a finite group\ldots    ]
{On a  bijection between a     finite group to a non-cyclic group with   divisibility of element orders}%
\author[    ]{  Mohsen Amiri}%
\address{ Departamento de  Matem\'{a}tica, Universidade Federal do Amazonas.}%
\email{m.amiri77@gmail.com}%
\email{}
\subjclass[2020]{20D99}
\keywords{ Finite group, order elements}%
\thanks{}
\thanks{}

% ----------------------------------------------------------------

\begin{abstract}
Consider a finite group $G$ of order $n$ with a prime divisor $p$. In this article, we establish, among other results, that if the Sylow $p$-subgroup of $G$ is neither cyclic nor generalized quaternion, then there exists a bijection $f$ from $G$ onto the abelian group $C_{\frac{n}{p}}\times C_p$ such that for every element $x$ in $G$, the order of $x$ divides the order of $f(x)$. This resolves Question 1.5 posed in \cite{mohsen}.
As application of our results, we show that  the group  with the third  largest value of the sum of element orders in the set of all finite groups of order $n$ is a solvable $p$-nilpotent group where $p$ is the smallest prime divisor of  $n$ such that the Sylow $p$-subgroups are not cyclic.
\end{abstract}

\maketitle

%=============================================================================================
%                                                                             INTRODUCTION
%=============================================================================================

\section{\bf Introduction}

For a group $G$, the set of orders of all elements in $G$ is denoted by $\omega(G)$, recently referred to as the spectrum of $G$. One of the intriguing concepts in Finite Group Theory that has garnered significant attention from researchers is the characterization of finite groups based on their element orders.

To illustrate, a finite group $G$ is considered recognizable by its spectrum if every finite group $H$ with an identical spectrum as $G$ is isomorphic to $G$. In 1987, W. Shi, in a letter to J. Thompson, proposed a conjecture asserting that every finite simple group is recognizable in the class of finite groups by both its order and the set of element orders. In response, J. Thompson commended this conjecture and introduced another one: every finite non-abelian simple group is recognizable in the class of finite groups with a trivial center, identified by the set of sizes of conjugacy classes. After extensive efforts by numerous mathematicians, initiated in \cite{97} and \cite{18}, the validity of both conjectures was ultimately established.
 In \cite{lad}, I. M. Isaacs poses a question about the spectrum of a finite group $G$ from a distinct perspective:

\begin{que}\label{is}(see Problem 18.1 in  \cite{Kh})
{\it Does there necessarily exist a bijection $f$ from $G$ onto a cyclic group of order $n$ such that for each element $x\in G$, the order of $x$ divides the order of $f(x)$}?
\end{que}

An affirmative answer
has been given by Ladisch in the special case when $G$ is a solvable group. 
Recently, author has given an   affirmative answer to Question \ref{is} in \cite{mohsen2}.
Note that if $G$ is a non-cyclic group, then  we can not  replace the cyclic group $C_n$ with other abelian groups  in Question \ref{is}. For example,  in \cite{mohsen}, authors show that if $n=8m$, then 
there is no any bijection $f$ from
$C_m\times Q_8$ onto  $C_{4m}\times C_2$ such that $o(x)\mid o(f(x))$. Hence, the cyclic group $C_n$  cannot be replaced by other non-cyclic abelian  groups of order $n$. Suppose that $H$ is a finite group. Then  $\mathfrak{cl}(H)$ denotes the set of all non-isomorphic finite groups $G$  for which there is a bijection $f:G\longrightarrow H$ such that for each element $x\in G$, the order of $x$ divides the order of $f(x)$.
 Clearly, by the affirmative answer to Question \ref{is}, we have   $\mathfrak{cl}(C_n)$  is the set of all finite groups of order $n$.   It is a natural question what we can say about the elements in $\mathfrak{cl}(H)$ if we know $H$.  The main objective of this article is   to give an affirmative answer to following question: 

 \begin{que}( Question 1.5 of \cite{mohsen})\label{iss}
Let $G$  be a finite group of order $n$ and $p$ be a prime divisor of $n$. If a Sylow $p-$subgroup of $G$ is neither cyclic nor generalized quaternion, then is it true that $G\in \mathfrak{cl}(C_{\frac{n}{p}} \times C_p)$?
\end{que}
More precisely, we prove the following Theorem:
\begin{thm}\label{fmain}
    
Let $G$ be a  finite  group of order $n=p_1^{\alpha_1}\ldots p_k^{\alpha_k}$ where $p_1<\ldots<p_k$ are prime numbers, and let $P_i\in Syl_{p_i}(G)$ for all $i=1,\ldots,k$. Then $G\in \mathfrak{cl}(\Pi_{i=1}^k Q_i)$ where     for $i=1,2,\ldots,k$, we have
$Q_i=C_{p_i^{\alpha_1}}$ if    $P_i$ is cyclic and $Q_i=C_{p_i^{\alpha_i}}\times C_{p_i}$ whenever $P_i$ is not cyclic and either    $p_i>2$ or $p_i=2$ and $|\{x\in G,x^{n_{p_i'}p_i}=1\})|>2n_{2'}$.
If $P_1$ is not cyclic, and  $|\{x\in G,x^{n_{p_i'}p_i}=1\}|=2n_{2'}$, then  $Q_1$ is  generaized quaternion group of order $2^{\alpha_1}.$ 
\end{thm}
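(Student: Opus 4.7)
The plan is to construct the required bijection by a Hall-type matching argument. Let $H := \prod_{i=1}^k Q_i$ and form the bipartite graph on $G \sqcup H$ with an edge from $x$ to $y$ whenever $o(x) \mid o(y)$. Since adjacency depends only on element orders, Hall's marriage theorem reduces the existence of the desired bijection to verifying, for every down-closed set $C$ in the divisor lattice of $n$, the inequality
\[
|\{x \in G : o(x) \in C\}| \;\ge\; |\{y \in H : o(y) \in C\}|.
\]
Every down-closed $C$ is a union of principal ideals $\{d : d \mid m\}$, so that inclusion--exclusion rewrites the condition as linear combinations of the principal counts $\theta_G(m) := |\{x \in G : x^m = 1\}|$ and $\theta_H(m)$ for $m \mid n$, reducing the task to controlling $\theta_G$ versus $\theta_H$ at every divisor.

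Since each $Q_i$ is a $p_i$-group and the $|Q_i|$ are pairwise coprime, $\theta_H$ factorises: $\theta_H(m) = \prod_{i=1}^k \theta_{Q_i}(m_{p_i})$, with explicit closed-form factors $\theta_{C_{p^{\alpha}}}(p^e) = p^{\min(e,\alpha)}$, $\theta_{C_{p^{\alpha-1}} \times C_p}(p^e) = p^{\min(e,\alpha-1) + \min(e,1)}$, and the standard formula in the generalised quaternion case. For $G$, Frobenius's theorem gives only $m \mid \theta_G(m)$, whereas we need $\theta_G(m) \ge \theta_H(m)$; this is the technical heart. Each $x$ with $x^m = 1$ decomposes canonically as a commuting product of its $p_i$-primary parts $x_i$ with $x_i^{p_i^{e_i(m)}} = 1$, so the count splits as a sum of commuting-pair counts across Sylow subgroups, and the hypothesis on each $P_i$ being non-cyclic provides a lower bound, prime-by-prime, on the number of $p_i$-elements of each order. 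Assembling these via a centraliser double-counting argument yields the required sharpening of Frobenius.

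The case split for $p_1 = 2$ is dictated by the classical fact that a $2$-group with a unique involution is cyclic or generalised quaternion. The condition $|\{x \in G : x^{2 n_{2'}} = 1\}| = 2 n_{2'}$ precisely captures the single-involution case (one contribution of $n_{2'}$ from odd-order elements, one from their products with the unique involution), forcing $P_1$ to be cyclic or quaternion; the larger value $> 2n_{2'}$ permits the target $C_{2^{\alpha_1-1}} \times C_2$. The main obstacle is the impossibility of a naive iteration of the single-prime statement of Question~\ref{iss}: after one application, the intermediate abelian group has a cyclic Sylow $p_j$-subgroup of exponent $p_j^{\alpha_j}$, which strictly exceeds $\exp(Q_j) = p_j^{\alpha_j - 1}$ whenever $P_j$ is itself non-cyclic, so no order-preserving bijection into $\prod Q_i$ can arise by composition. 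The proof must therefore control $\theta_G$ at all non-cyclic primes simultaneously, and verifying the full family of down-closed inequalities (rather than just principal ones, which do not imply their linear combinations) is where the essential combinatorial work of the argument lies.
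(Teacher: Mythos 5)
Your Hall's-marriage-theorem reduction to the family of inequalities over down-closed sets of divisors is sound, and your instinct that the heart of the matter is a sharpening of Frobenius's theorem does match the paper, whose Section \ref{sec2} is devoted to exactly such bounds (e.g.\ $|Sol(1,dp^j,G)|\ge dp^{j+1}$ when a Sylow $p$-subgroup contains a suitable rank-two subgroup, with separate and delicate arguments for $p=2$). But the proposal has a genuine gap at its centre: you reduce the Hall condition to the principal counts $\theta_G(m)$ versus $\theta_H(m)$ by inclusion--exclusion, and then concede in your last sentence that the principal inequalities do not imply the inequalities for general down-closed sets. That concession is correct, and it leaves the key step unproved --- you supply no mechanism for non-principal down-closed sets, and the "centraliser double-counting" that is supposed to deliver even the principal bounds is only gestured at. The paper does not use Hall's theorem at all: it imports from \cite{mohsen2} an explicit bijection from $G$ onto a union of pieces $X_{p_i,\delta_i-j}C_{\lambda_i(n)}$ of $C_n$, organized by the exact $p_i$-part of each element order, and then composes with an explicit map $\beta$ from $C_n$ onto $\Pi_i Q_i$ sending the layer $C_{p_i^{j+r_i-1}}C_{\lambda_i(n)}$ onto $(C_{p_i^{j}}\times (C_{p_i})^{r_i-1})W_i$. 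The generalized Frobenius theorems are used to guarantee that each layer receives enough elements of sufficiently small order, which is strictly finer information than any family of inequalities on $\theta_G$ alone and is what makes the down-closed issue disappear.

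A second, smaller gap: you assert that $|\{x: x^{2n_{2'}}=1\}|=2n_{2'}$ "precisely captures the single-involution case." The Fenchel--Murai lemma only yields that the Sylow $2$-subgroup is cyclic, generalized quaternion, dihedral, or semidihedral; the paper needs a separate counting argument (Lemma \ref{dis}, Case 2) to exclude the dihedral and semidihedral possibilities, and your claim that the odd-order elements multiplied by "the unique involution" contribute $n_{2'}$ solutions presupposes a centrality statement you have not established. Since the quaternion exception in the statement comes exactly from this analysis, the case split cannot be waved through.
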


Note that our proof is elementary, and we do not rely on the classification of finite simple groups in our arguments.

For any two positive integers $n$  and  $p$, the biggest divisor of $n$ which is co-prime to $p$ is denote by $n_{p'}$ and $n/n_{p'}$ is denoted by $n_p$.
Let $G$ be a finite group of order $n$ and $U$ a subset of $G$.
We denote by $Sol(U,d,G)$ to be the set of all $x\in G$ such that $x^m\in U^G=\{u^g: b\in U, g\in G\}$ for some divisor $m$ of $d$.
Also, For any element $x$ of a group $G$, the symbol  $[x]$ denote  the set of all  generators of the cyclic subgroup $\langle x\rangle$ of $G$.

As the first consequence of our result, we obtain the following theorem:
\begin{thm}\label{cyc}
    Let $G$  be a finite group of order $n$ such that for any prime divisor $p$ of $n$, if $p^2\mid n$, then $G\not\in \mathfrak{cl}(C_{\frac{n}{p}} \times C_p)$.
    Then $G$ is a solvable group and all Sylow subgroups of $G$ are cyclic or generalized Quaternion.
\end{thm}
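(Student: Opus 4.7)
The plan is to combine Theorem~\ref{fmain} with Frobenius' classical divisibility theorem on the solution set of $x^{m}=1$ to first constrain the Sylow structure of $G$, and then to produce a normal $2$-complement in the generalized quaternion case, from which solvability follows routinely.

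First I would fix a prime $p$ with $p^{2}\mid n$ and a Sylow $p$-subgroup $P$ of $G$, and suppose toward a contradiction that $P$ is neither cyclic nor generalized quaternion (the latter being only possible for $p=2$). Theorem~\ref{fmain} places $G$ in $\mathfrak{cl}(\prod_{i}Q_{i})$. For odd $p$ this immediately forces $Q_{p}=C_{p^{\alpha_{p}-1}}\times C_{p}$. For $p=2$ it gives either $Q_{2}=C_{2^{\alpha_{2}-1}}\times C_{2}$ or $Q_{2}\cong Q_{2^{\alpha_{2}}}$, the latter only if $|\{x\in G:x^{2n_{2'}}=1\}|=2n_{2'}$. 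In that latter subcase, Frobenius' theorem makes $N=\{x\in G:x^{2n_{2'}}=1\}$ a subgroup of order $2n_{2'}$; hence $N\cap P$, being a Sylow $2$-subgroup of $N$, has order $2$. But $N\cap P=\{x\in P:x^{2}=1\}$ is the identity together with the involutions of $P$, and a $2$-group with a unique involution must be cyclic or generalized quaternion, contradicting the assumption on $P$. Hence in every case $Q_{p}=C_{p^{\alpha_{p}-1}}\times C_{p}$.

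Next I would transfer the bijection onto $C_{n/p}\times C_{p}$. Write $\prod_{i}Q_{i}\cong Q_{p}\times\prod_{j\neq p}Q_{j}$. The affirmative answer to Question~\ref{is}, applied to the group $\prod_{j\neq p}Q_{j}$ of order $n_{p'}$, supplies an order-dividing bijection $g:\prod_{j\neq p}Q_{j}\to C_{n_{p'}}$; then $(u,v)\mapsto(u,g(v))$ is an order-dividing bijection $\prod_{i}Q_{i}\to Q_{p}\times C_{n_{p'}}\cong C_{n/p}\times C_{p}$. Composed with Theorem~\ref{fmain}, it exhibits $G\in\mathfrak{cl}(C_{n/p}\times C_{p})$, contradicting the hypothesis. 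Thus every Sylow subgroup of $G$ is cyclic or generalized quaternion, and in particular every odd-order Sylow subgroup is cyclic.

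For solvability I split into two cases. If the Sylow $2$-subgroup of $G$ is cyclic, every Sylow of $G$ is cyclic, so $G$ is a Z-group—hence metacyclic and solvable. Otherwise let $P_{2}$ be generalized quaternion of order $2^{\alpha}\geq 8$ and set $k=n_{2'}$. Since every odd Sylow of $G$ is cyclic, $\prod_{j\neq 2}Q_{j}\cong C_{k}$, so $\prod_{i}Q_{i}=Q_{2}\times C_{k}$. The option $Q_{2}=C_{2^{\alpha-1}}\times C_{2}$ in Theorem~\ref{fmain} would yield $\prod_{i}Q_{i}\cong C_{n/2}\times C_{2}$ and thus $G\in\mathfrak{cl}(C_{n/2}\times C_{2})$, against the hypothesis; so Theorem~\ref{fmain} must return $Q_{2}\cong Q_{2^{\alpha}}$, which forces $|\{x\in G:x^{2k}=1\}|=2k$. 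By Frobenius' theorem this set is then a normal subgroup $N$ of $G$ of order $2k$ whose Sylow $2$-subgroup (of order $2$) is cyclic. Burnside's normal $p$-complement theorem produces a characteristic, hence $G$-normal, subgroup $M\leq N$ of order $k$. The Sylow subgroups of $M$ are odd Sylows of $G$, which are cyclic, so $M$ is a solvable Z-group; $G/M$ is a $2$-group, hence solvable; therefore $G$ is solvable.

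The chief technical obstacle I anticipate is the step promoting the cardinality equality $|\{x\in G:x^{m}=1\}|=m$ to the subgroup property—formally an instance of the Frobenius conjecture. For the moduli $m=2n_{p'}$ arising here, I expect this can be settled either via classical special cases of the conjecture or, in the quaternion situation, by exploiting the fact that the unique central involution of $P_{2}$ controls fusion in $G$, so that one can verify the subgroup property by hand without invoking the full conjecture.
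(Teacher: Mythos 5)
Your overall architecture --- contrapositive of the main bijection theorem to pin down the Sylow subgroups, then a normal $2$-complement plus the fact that a group with all Sylow subgroups cyclic is metacyclic --- is the route the paper intends: Theorem \ref{cyc} is left as a consequence of the theorem confirming Question \ref{iss} together with Lemma \ref{dec}. The genuine gap is that in two separate places you promote the equality $|Sol(1,2n_{2'},G)|=2n_{2'}$ to the statement that this solution set is a \emph{subgroup}, and you attribute this to ``Frobenius' theorem.'' Frobenius' theorem gives only the divisibility $2n_{2'}\mid |Sol(1,2n_{2'},G)|$; the subgroup conclusion is the Frobenius \emph{conjecture}, whose general proof rests on the classification, which this paper explicitly avoids. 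You flag the issue yourself, but your proposed repair (fusion controlled by a unique central involution) is only available in your second use, where the Sylow $2$-subgroup is already known to be generalized quaternion; in the first use you are trying to rule out $|Sol(1,2n_{2'},G)|=2n_{2'}$ precisely when $P$ is \emph{not} cyclic or generalized quaternion, so there is no unique involution to exploit and the argument as written does not close.

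The paper fills both holes without the conjecture, and this is where the real work lies. For the first, Lemma \ref{dis} shows directly that if the Sylow $2$-subgroup is neither cyclic nor generalized quaternion then $|Sol(1,2n_{2'},G)|> 2n_{2'}$; its proof uses the Fenchel--Murai lemma (Lemma \ref{ciic}) to reduce to dihedral or semidihedral Sylow $2$-subgroups and then a conjugacy-class counting argument to eliminate those. (Alternatively, you could simply quote the theorem confirming Question \ref{iss} by contraposition, which already packages Lemma \ref{dis} into the statement that $P$ neither cyclic nor generalized quaternion implies $G\in\mathfrak{cl}(C_{n/p}\times C_p)$.) For the second, Lemma \ref{dec} obtains $G=K\rtimes P$ with $K$ a normal Hall $2'$-subgroup directly from $|Sol(1,2n_{2'},G)|=2n_{2'}$, via the Frobenius normal $p$-complement theorem and the structure of $Aut(Q_8)$, with no need for $Sol(1,2n_{2'},G)$ to be a subgroup; your Burnside step then becomes unnecessary, since $K$ is already the normal $2$-complement and has all Sylow subgroups cyclic. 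Your middle paragraph --- splitting off the $p'$-part with the order-preserving bijection onto $C_{n_{p'}}$ and recombining with $Q_p$ to land in $C_{n/p}\times C_p$ --- is correct and is essentially how the paper passes from $\Pi_iQ_i$ to $C_{n/p}\times C_p$ using the main result of \cite{mohsen}.
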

 
 In \cite{jaf}, H. Amiri, S.M.J. Amiri and  M. Isaacs studied the spectrum of a finite group $G$ from a different point of view: they introduced the function  $\psi(G)=\sum_{g\in G}o(g)=\sum_{m\in \omega(G)}m\cdot s(m)$, and they proved 
  that $\psi(C_n)$ is maximal in the set $\{\psi(G) :
|G|=n\}$. 
  Later, other authors, in \cite{2}, and independently, R. Shen, G. Chen, and C. Wu, in \cite{15}, investigated groups with the second-largest value of the sum of element orders. The function $\psi$ has been considered in various works, as seen in \cite{1}, \cite{2}, \cite{6}, \cite{7}, \cite{9}, \cite{10}, \cite{11}, \cite{16}, \cite{Marefat2}. While certain papers focused on identifying the maximum, second-largest, and minimum values of $\psi$, others endeavored to formulate novel criteria for the structural properties (including solvability, nilpotency, etc.) of finite groups. Also, recently, H. Kishore Dey and A. Mondal, in \cite{Kishore}, found an exact upper bound for the sum of powers of element orders in non-cyclic finite group.  Two finite groups $G$ and $H$ are called the same order type groups whenever $|Sol(1,d,G)|=|Sol(1,d,H)|$ for all positive integers $d$.  For two finite group $G$ and $H$ of the same order $n$ we show that  if $G\in \mathfrak{cl}(H)$, then $\psi(G)=\psi(H)$ if and only if $G$ and $H$ are the same order type groups. As application of our results, we show that  the group  with the third  largest value of the sum of element orders in the set of all finite groups of order $n$ is a solvable $p$-nilpotent group where $p$ is the smallest prime divisor of order of $n$ such that the Sylow $p$-subgroups are not cyclic. More precisely, We prove the following theorem.
  \begin{thm}\label{main5}
    Let $G$ be a finite group of order $n$ with  the third-largest value of $\psi$ within the set of all finite groups of order $n$. Denote by $P$ a Sylow $p$-subgroup of $G$, where $p$ is the smallest prime divisor of $n$, and $P$ is non-cyclic. Consequently, $G$ can be expressed as a semidirect product $G=K\rtimes P$, where $K$ is a Hall $p'$-subgroup of $G$. Furthermore, if all Sylow subgroups of $G$ except the Sylow $p$-subgroups are cyclic, then $K$ is cyclic.
Moreover,

(i) If $P$ contains an abelian subgroup of rank $3$, then $G\cong C_{n/|P|}\times P$. Furthermore, $P$ and $C_{|P|/p^2}\times (C_p)^2$ are groups of  the same order type.

(ii) If there exist non-cyclic subgroups $R\in Syl_r(G)$ and $Q\in Syl_q(G)$ where $r<q$, then $G\cong C_{\frac{n}{|Q||R|}} \times R \times Q$. Additionally, $RQ$ and $C_{\frac{|R||Q|}{r^2q^2}} \times (C_q)^2 \times (C_r)^2$ are groups of the same order type.
 \end{thm}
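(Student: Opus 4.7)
The plan is to combine Theorem~\ref{fmain} with the known classification of the two largest values of $\psi$ on groups of order $n$ (see~\cite{15},\cite{2}) and the ``same order type'' criterion for $\psi$-equality stated in the introduction of this paper.

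First, applying Theorem~\ref{fmain} to $G$ yields a target group $H=\prod_{i=1}^{k}Q_i$ with $G\in\mathfrak{cl}(H)$; the associated bijection $f\colon G\to H$ with $o(x)\mid o(f(x))$ forces $\psi(G)\leq\psi(H)$, with equality exactly when $G$ and $H$ have the same order type. By the Amiri--Amiri--Isaacs theorem $\psi(C_n)$ is the unique first maximum, and by~\cite{15},\cite{2} the second maximum is attained, up to order type, by $C_{n/p_0}\times C_{p_0}$ where $p_0$ is the smallest prime with $p_0^2\mid n$. Since $G$ realises the third-largest value of $\psi$, the value $\psi(H)$ must itself be at least the third-largest; in particular $H\not\cong C_n$, so at least one Sylow subgroup of $G$ is non-cyclic and the smallest non-cyclic Sylow prime $p$ exists.

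Next, I would narrow the shape of $H=\prod Q_i$ using the multiplicativity $\psi(H)=\prod_i\psi(Q_i)$ and the strict descent $\psi(C_{p^\alpha})>\psi(C_{p^{\alpha-1}}\times C_p)>\psi(C_{p^{\alpha-2}}\times (C_p)^2)$, verified by a direct count of elements of each order. A prime-by-prime comparison shows that introducing a non-cyclic factor at a larger prime costs strictly more in $\psi$ than at a smaller prime. Combining these, the only two shapes of $H$ reaching the third-largest $\psi$-level are: (a) exactly one $Q_i$ is non-cyclic, at the smallest non-cyclic prime $p$, with $Q_p\cong C_{p^{\alpha_i-2}}\times (C_p)^2$, so that $P$ contains an abelian subgroup of rank $3$ (case~(i)); (b) exactly two $Q_i$ are non-cyclic, at the smallest two such primes $r<q$, each of the form $C_{p_i^{\alpha_i}}\times C_{p_i}$ (case~(ii)). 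With $H$ pinned down, the third-largest hypothesis forces $\psi(G)=\psi(H)$: any strict inequality would place a group realising $\psi(H)$ strictly above $G$, contradicting third-largest. The same-order-type criterion then yields $|Sol(1,d,G)|=|Sol(1,d,H)|$ for every $d\mid n$.

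Matching the counts of $p$-elements and $p'$-elements with those in the abelian $H$ produces a unique Hall $p'$-subgroup $K$ and the splitting $G=K\rtimes P$. If every Sylow subgroup of $G$ other than $P$ is cyclic, the same prime-by-prime count applied to $K$ forces $K$ to be cyclic. In case~(i), the order-type identity combined with centraliser arguments forces $P$ to act trivially on $K$, yielding $G\cong C_{n/|P|}\times P$, and the coincidence of order types between $P$ and $C_{|P|/p^2}\times (C_p)^2$ follows from direct matching of element counts. Case~(ii) is strictly analogous, with $R$ and $Q$ each decoupling and $H$'s order-type data translating into $G\cong C_{n/(|R||Q|)}\times R\times Q$.

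The principal obstacles lie in two places. First, pinning down the exact configurations of $H$ that realise the third-largest $\psi$-value requires careful numerical comparison of $\psi$-values under various placements of ``rank-increments'' across the prime factorisation of $n$, and explicit computation for small abelian $p$-groups to rule out hybrid candidates such as a rank-$2$ non-cyclic Sylow at $p$ combined with alterations elsewhere. Second, translating the statistical equality $|Sol(1,d,G)|=|Sol(1,d,H)|$ for all $d$ into genuine structural statements inside $G$ (normal complement and direct product) requires converting abelian order-count data into centralising relations between Sylow subgroups, which is the technical core of the argument.
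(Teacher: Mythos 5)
Your strategy works for parts (i) and (ii), and there it coincides with what the paper does (Lemmas \ref{solli} and \ref{s123}: apply Theorem \ref{mmm}/\ref{inject221} to get $G\in\mathfrak{cl}(H)$ with $H$ of rank at least $3$ at some prime, note that $\psi(H)$ is then at most the third-largest value, force $\psi(G)=\psi(H)$, and transfer nilpotency through the same-order-type criterion via Lemmas \ref{same} and \ref{pnil}). But your claim that the only shapes of $H$ compatible with $G\in\Psi_3(n)$ are your configurations (a) and (b) is false, and this is where the proposal breaks down. The generic third-largest group has exactly one non-cyclic Sylow subgroup $P$, of rank exactly $2$, all others cyclic; Theorem \ref{fmain} then only yields $H=C_{n/p}\times C_p$ (or $C_{n_{2'}}\times Q_{2^{\alpha_1}}$), whose $\psi$-value is the \emph{second}-largest. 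In that regime $\psi(G)<\psi(H)$ is not a contradiction but the expected situation, the equality $\psi(G)=\psi(H)$ cannot be forced, the same-order-type machinery gives nothing, and your entire mechanism for producing the splitting $G=K\rtimes P$ and the cyclicity of $K$ --- which is the main assertion of the theorem --- evaporates. You cannot recover it by ``matching counts of $p$-elements and $p'$-elements,'' because you no longer know those counts agree with an abelian model.

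The paper closes exactly this case by a different and much longer route. First it proves solvability of every $G\in\Psi_3(n)$ (Lemma \ref{halp}), which in the critical subcase of a dihedral or semidihedral Sylow $2$-subgroup requires the Gorenstein--Walter theorem \cite{Gorr} together with the classification of $\Psi_2(n)$ from \cite{mohsen234} and an explicit estimate $\psi(G)<mn\leq\psi(C_{n/3}\times C_3)$ to exclude $PSL_2(q)$-type quotients; your proposal does not address solvability at all. Then Theorem \ref{pp2} derives $G=K\rtimes P$ by combining the Burnside and Frobenius normal $p$-complement theorems with Lemmas \ref{2cm}, \ref{allcyc}, \ref{dec} and Theorem \ref{22va}, and by exhibiting explicit competitor groups (such as $C_{|P|/2}\times S_3$, or $C_{n/(3^{m}\cdot 2)}\times(Q\rtimes\langle\sigma\rangle)$, plus GAP checks for $|G|\leq 48$) whose $\psi$-values would sit strictly between $\psi(G)$ and the second maximum if $G$ failed to be $p$-nilpotent, contradicting $G\in\Psi_3(n)$. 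None of this is present in, or replaceable by, the order-type argument you propose, so the proof as outlined has a genuine gap covering the main clause of the statement.
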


  Also,  we obtain the following corollary about the structure of finite group of order $n$ with the second-largest value of the sum of $k$-th powers of element orders:
 \begin{cor} \label{coo}
    Let $G$ be a finite group of order $n$ with the second-largest value of the sum of $k$-th powers of element orders. Then, either $G \cong C_{n/p} \times C_p$ for some prime divisor $p$ of $n$, or $G \cong C_m \times Q_{2^e}$ where $m$ is an odd number such that $n = m2^e$, or all Sylow subgroups of $G$ are cyclic.
 \end{cor}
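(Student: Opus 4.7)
The plan is to combine Theorem~\ref{fmain} with the fact that the cyclic group $C_n$ is the unique maximizer of $\psi_k(G):=\sum_{g\in G}o(g)^k$ among groups of order $n$. Hence the second-largest value of $\psi_k$ is attained by a non-cyclic group $G$. If every Sylow subgroup of $G$ is cyclic, the third case of the corollary is immediate, so we may assume that some Sylow subgroup of $G$ is non-cyclic.

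By Theorem~\ref{fmain}, $G\in\mathfrak{cl}(H)$ for $H=\prod_{i=1}^{k} Q_i$, where $Q_i$ is cyclic of order $p_i^{\alpha_i}$ when $P_i$ is cyclic, $Q_i\cong C_{p_i^{\alpha_i-1}}\times C_{p_i}$ when $P_i$ is non-cyclic and not in the exceptional quaternion case, and $Q_i\cong Q_{2^e}$ in the exceptional case. Combining cyclic direct factors of coprime order, $H$ takes the form $C_m\times\prod_{j\in S}C_{p_j}$ in the non-exceptional case and $C_{n_{2'}}\times Q_{2^e}$ in the exceptional one, where $S$ indexes the non-cyclic Sylow subgroups of $G$. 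In particular $\psi_k(G)\le\psi_k(H)$.

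The pivotal step is to show that $|S|=1$. Because $\psi_k$ is multiplicative over direct products of coprime order, if $|S|\ge 2$ and $p_i,p_j\in S$ are distinct, then
\[
\frac{\psi_k(C_{n/p_j}\times C_{p_j})}{\psi_k(H)}=\frac{\psi_k(C_{p_i^{\alpha_i}})}{\psi_k(C_{p_i^{\alpha_i-1}}\times C_{p_i})}>1,
\]
since $C_{p_i^{\alpha_i}}$ strictly maximizes $\psi_k$ among groups of its order. Hence $\psi_k(G)\le\psi_k(H)<\psi_k(C_{n/p_j}\times C_{p_j})$, which contradicts $G$ having the second-largest value, as $C_{n/p_j}\times C_{p_j}$ is itself a non-cyclic group of order $n$. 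So $|S|=1$, whence $H$ is either $C_{n/p}\times C_p$ or $C_m\times Q_{2^e}$; an analogous comparison rules out any additional non-cyclic Sylow coexisting with the quaternion one.

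What remains---and what I expect to be the main obstacle---is to promote the relation $G\in\mathfrak{cl}(H)$ to an isomorphism $G\cong H$. The second-largest hypothesis forces $\psi_k(G)=\psi_k(H)$; since $o(x)\le o(f(x))$ along the bijection $f:G\to H$, equality of the sums of $k$-th powers is attained termwise, giving $o(x)=o(f(x))$ for every $x\in G$. Thus $G$ and $H$ have the same order type. The plan to upgrade this is to exploit the order-type data (via Frobenius-type counts of elements of fixed prime-power order) to force a normal cyclic Hall $p'$-complement $K$ in $G$, so that $G=K\rtimes P$; matching the $p$-part counts then pins $P$ down as $C_{p^{\alpha-1}}\times C_p$ (or $Q_{2^e}$ in the exceptional case), and comparing the orders of elements of $G$ with those of $H=K\times P$ forces the action of $P$ on $K$ to be trivial, yielding $G\cong K\times P\cong H$.
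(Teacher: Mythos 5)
Your reduction in the first three paragraphs is sound and is essentially the route the paper takes: Theorem~\ref{fmain} (via Theorem~\ref{mmm}) gives $G\in\mathfrak{cl}(H)$ with $H=\prod_i Q_i$; the multiplicativity of $\sum_{g}o(g)^k$ over direct factors of coprime order (Lemma~\ref{dir}) together with the strict maximality of cyclic $p$-groups shows that at most one $Q_i$ can be non-cyclic, since otherwise the non-cyclic group $C_{n/p_j}\times C_{p_j}$ would beat $H$ and hence $G$; and the second-largest hypothesis forces $\sum_{g\in G}o(g)^k=\sum_{h\in H}o(h)^k$, whence $o(x)=o(f(x))$ termwise and $G$ and $H$ are of the same order type (the argument of Lemma~\ref{same} applies verbatim to $k$-th powers).

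The genuine gap is exactly where you flag it: promoting ``same order type as $H$'' to $G\cong H$. Your proposed substitute --- extracting a normal cyclic Hall $p'$-complement directly from Frobenius-type solution counts --- is not carried out, and it would be hard to carry out: knowing $|Sol(1,n_{q'},G)|=n_{q'}$ does not elementarily yield a normal subgroup of that order (that inference is the Frobenius conjecture, not Frobenius' theorem). The paper closes this step differently, via Lemma~\ref{pnil}, which imports Corollary~3.4 of \cite{mohsen}: two groups of the same order type are simultaneously $p$-nilpotent or not. Since $H$ is nilpotent in both relevant cases ($C_{n/p}\times C_p$ and $C_m\times Q_{2^e}$), $G$ is nilpotent; termwise order equality then forces $G$ to contain an element of order $n/p$ (resp.\ $m\cdot 2^{e-1}$), so its unique non-cyclic Sylow subgroup has a cyclic subgroup of index $p$, and a final comparison of $\sum_g o(g)^k$ against the abelian (resp.\ quaternion) model eliminates the remaining non-abelian possibilities. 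Without this order-type-preserves-$p$-nilpotency transfer, or an equivalent replacement, your Step 4 does not close, so the proof as written is incomplete at its decisive point.
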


If $G$ is a finite group, let $o(G)  = \frac{1}{|G|}\sum_{g\in G}o(g)$.
 It is easy to see that $ h(G)\geq o(G)$, where $h(G)$ is the number of conjugacy classes of $G$.
 It is known by Lindsey's Theorem \cite{Lind} that  $o(G)\leq o(C_n)$ for any finite group $G$ of order $n$.
 If in Theorem \ref{sec}, set $l=1$ we obtain the following theorem:

\begin{thm}\label{oo}
  Let $G$ be a  finite  group of order $n=p_1^{\alpha_1}p_2^{\alpha_2}\ldots p_k^{\alpha_k}$ where $p_1<\ldots <p_k$ are prime numbers. 
Let $N_i$ be an elementary abelian     $p_i$-subgroup of $G$ of order $p_i^{r_i}\leq p_i^{p_i-1}$   for all $i = 1, 2, \ldots, k$.    Then  
        $$ o(G)\leq   \Pi_{i=1}^k((\frac{1-p_i}{p_i^{\alpha_i}}+p_i^{r_i-\alpha_i}( \frac{p_i^{2(\alpha_i+1)}-1}{p_i+1})).$$

\end{thm}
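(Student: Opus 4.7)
The plan is to invoke Theorem \ref{sec} and specialize it to $l = 1$, since Theorem \ref{oo} is announced as precisely this specialization; consequently no new combinatorial input is required, and the work reduces to verifying an algebraic identity.

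Concretely, Theorem \ref{sec} bounds $\frac{1}{|G|}\sum_{g \in G} o(g)^l$ by a product of per-prime factors. Each factor is built by first counting elements of each $p_i$-power order via Frobenius's theorem (using that $p_i^j$ divides $|\{x \in G : x^{p_i^j} = 1\}|$ for $j \le \alpha_i$), next using the elementary abelian subgroup $N_i$ of rank $r_i$ to guarantee at least $p_i^{r_i}$ solutions of $x^{p_i} = 1$, and finally summing $o(g_{p_i})^l$ weighted by these counts. At $l = 1$ the weighted sum collapses to a geometric series of the form
$\sum_{j=0}^{\alpha_i}(p_i^{2j+1} - p_i^{2j}) = (p_i - 1)\sum_{j=0}^{\alpha_i} p_i^{2j} = \frac{p_i^{2(\alpha_i+1)}-1}{p_i+1}.$
Normalizing by $p_i^{\alpha_i}$ (to pass from $\psi(G)$ to $o(G)$) and absorbing the boundary correction $\frac{1-p_i}{p_i^{\alpha_i}}$ that arises from the difference between $x^{p_i^j} = 1$ and order exactly $p_i^j$ yields the local factor
\[
\frac{1-p_i}{p_i^{\alpha_i}} + p_i^{r_i-\alpha_i}\cdot \frac{p_i^{2(\alpha_i+1)}-1}{p_i+1}.
\]
Multiplying over all primes $p_i$ and using $o(g) = \prod_i o(g_{p_i})$ for the unique decomposition of $g$ into pairwise-commuting prime-power parts then gives the claimed product bound.

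The main obstacle is not in the specialization itself but lives inside Theorem \ref{sec}: justifying the multiplicativity over primes for general (possibly non-solvable) finite groups. The assignment $g \mapsto (g_{p_i})_i$ only distributes over tuples of pairwise-commuting prime-power elements, and the hypothesis $r_i \le p_i - 1$ enters precisely to supply the $p_i$-local regularity needed for these counts to decouple cleanly. For the present corollary, however, nothing more is required beyond the substitution $l = 1$ and the geometric-series simplification above.
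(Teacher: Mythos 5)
Your top-level reduction --- specialize Theorem \ref{sec} to $f=I$, $l=1$ and divide by $n=\prod_i p_i^{\alpha_i}$ --- is exactly the paper's derivation, and your geometric-series identity $(p_i-1)\sum_{j=0}^{\alpha_i}p_i^{2j}=\frac{p_i^{2(\alpha_i+1)}-1}{p_i+1}$ is the same arithmetic the paper carries out through Lemma \ref{reduce} and Corollary \ref{siss}. So as a citation of Theorem \ref{sec} plus algebra, the proposal is fine.

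However, your account of where the local factors of Theorem \ref{sec} come from is not the paper's argument, and the mechanism you sketch would fail if taken as an actual proof. The paper does not count elements of $p_i$-power order inside $G$ via Frobenius and then decouple $\sum_{g}o(g)=\sum_g\prod_i o(g_{p_i})$ over the primes. That decoupling, performed in $G$ itself, bounds $\psi(G)$ by $\prod_i\sum_{x_i}o(x_i)$ with $x_i$ ranging over all $p_i$-elements of $G$; since a non-nilpotent group typically has many more than $p_i^{\alpha_i}$ such elements (already $S_3$ has three involutions), this is strictly weaker than the claimed product and cannot produce the stated local factor. What the paper actually does is transfer everything to an abelian comparison group: Theorem \ref{mmm} (resting on the generalized Frobenius theorems, which is where the hypothesis $r_i\le p_i-1$ enters, via the $A$-regularity of $N_i$ supplied by Lemma 9.4 of \cite{Berke}) gives a bijection $\beta\colon G\to H=\prod_i Q_i$ with $Q_i=C_{p_i^{\alpha_i-r_i+1}}\times(C_{p_i})^{r_i-1}$ and $o(x)\mid o(\beta(x))$, whence $\psi(G)\le\psi(H)$; the multiplicativity is then Lemma \ref{dir} applied to $H$, whose direct factors have pairwise coprime orders, so the ``main obstacle'' you identify never arises. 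Similarly, the correction term $1-p_i$ is not a boundary effect of Frobenius counting in $G$: it is the constant produced by Lemma \ref{reduce} when $\psi(Q_i)$ is computed by factoring out $\Omega_1(Q_i)$. Your proposal is therefore acceptable only with Theorem \ref{sec} treated as a black box; the internal justification you offer for it should be replaced by the bijection-and-transfer argument.
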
 
 
We denote the set of all elements in any subset $U$ of group $G$ with order $d$ as $B_d(U)$. Additionally, we represent the unique cyclic subgroup of order $d$ within the cyclic group of order $n$ by $C_{d}$.    
In what follows, we adopt the notations  established in the Isaacs' book on finite groups  \cite{I}.

%=============================================================================================
%                                                              A GENERALISATION OF ELEMENT ORDERS
%=============================================================================================

\section{Generalizations of the Frobenius Theorem}\label{sec2}

 We start with the following  
fundamental theorem proved by Frobenius \cite{fr}, more than hundred years ago, in 1895:

\begin{thm}
If $d$ is a divisor of the order of a finite group $G$, then the number of solutions of
$x^d=1$ in $G$ is a multiple of $d$.
\end{thm}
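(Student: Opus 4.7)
The plan is to prove $d \mid N_d$ by induction on $n/d$, where $n := |G|$ and $N_e := |\{x \in G : x^e = 1\}|$ for $e \mid n$. The base case $n/d = 1$ is immediate, since then $d = n$ and $N_d = n = d$.

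For the inductive step $n/d > 1$, I would take any prime $p$ dividing $n/d$; then $pd \mid n$ with $n/(pd) < n/d$, so by the inductive hypothesis $pd \mid N_{pd}$, and in particular $d \mid N_{pd}$. It remains to prove $d \mid (N_{pd} - N_d)$. Writing $d = p^k m$ with $p \nmid m$, the difference $N_{pd} - N_d$ counts elements of $G$ whose order has the form $p^{k+1} m'$ for some divisor $m' \mid m$; grouping these by the cyclic subgroup each element generates yields
\[
N_{pd} - N_d \;=\; \sum_{m' \mid m} \varphi(p^{k+1} m')\, c_{p^{k+1} m'}(G),
\]
where $c_e(G)$ denotes the number of cyclic subgroups of $G$ of order $e$. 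Each summand is divisible by $\varphi(p^{k+1}) = p^k(p-1)$, supplying the full $p$-part of $d$ automatically.

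The main obstacle is the coprime part $m$ of $d$: the sum above is not visibly divisible by $m$, and extracting this divisibility is the heart of Frobenius's original argument. My plan is a secondary induction in which the outer inductive hypothesis is applied not only to $N_{pd}$ but to $N_{p^{k+1} m'}$ for every $m' \mid m$, followed by a M\"obius-style inversion on the divisor lattice of $m$ that converts these many weaker divisibilities into the single stronger divisibility of the target sum by $m$. An alternative avenue is to apply the inductive hypothesis to proper subgroups of $G$---cyclic subgroups of order $p^{k+1} m'$ trivially satisfy the theorem---and to assemble the local counts into the global identity above while carefully tracking overlaps. I expect the combinatorial bookkeeping required to merge the $p$-part and the $p'$-part of the divisibility into a single conclusion to be the most delicate step of the argument.
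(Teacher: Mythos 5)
This statement is quoted in the paper as Frobenius's classical 1895 theorem with a citation to the original; the paper gives no proof of it, so there is nothing internal to compare your argument against. Judged on its own terms, your proposal has a genuine gap: it is a plan rather than a proof, and the one step you defer is precisely the entire content of the theorem. Your reduction is fine as far as it goes: the base case, the identity
\[
N_{pd}-N_d=\sum_{m'\mid m}\varphi\bigl(p^{k+1}m'\bigr)\,c_{p^{k+1}m'}(G),
\]
and the observation that $\varphi(p^{k+1})=p^{k}(p-1)$ supplies the $p$-part $p^{k}$ of $d$ are all correct. But the divisibility of this sum by the coprime part $m$ --- which you yourself identify as ``the heart of Frobenius's original argument'' --- is never established; you only name two possible avenues without carrying either out.

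Worse, the first avenue as described cannot work with the induction you have set up. Your outer induction is on $n/d$, so the inductive hypothesis is available only for divisors $e$ of $n$ with $n/e<n/d$, i.e.\ for proper multiples of $d$ (up to divisibility of $n$). The quantities $N_{p^{k+1}m'}$ for proper divisors $m'$ of $m$ that you propose to feed into a M\"obius inversion are indexed by integers $p^{k+1}m'$ which are in general \emph{not} multiples of $d=p^{k}m$ (for instance $m=15$, $p=2$, $m'=5$ gives $p^{k+1}m'=2^{k+1}\cdot 5<2^{k}\cdot 15$), so the inductive hypothesis does not apply to them. You would have to restructure the induction entirely (the standard proofs induct on $|G|$, reduce to coprime factorizations of $d$ via commuting $p$-part/$p'$-part decompositions, and handle prime-power $d$ by counting inside Sylow normalizers). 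The second avenue, assembling counts from cyclic subgroups ``while carefully tracking overlaps,'' is exactly the hard combinatorial work and is not done. As it stands, the proposal proves only that $p^{k}\mid N_{pd}-N_d$, which is far short of $d\mid N_d$.
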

 
Let $G$ be a finite  group of order $n$ and $p$ be a prime divisor of $n$. We denote the set of all $p$-elements $x\in G$ such that 
$o(hy)\mid lcm(o(h),o(y))$ for any $p$-element $y$ of $G$ and all $h\in \langle x\rangle$ by $LCM_p(G)$.
From Lemma 2.3 of \cite{man2}, $LC_p(G):=\langle LCM_p(G)\rangle$ is a characteristic nilpotent subgroup of $G$.  The $p$-subgroup $N$ of $G$ of order $p^r$ and exponent $p^s$ is called $A$-regular(Almost regular) whenever  for any $y\in Sol(1,|P|,G)$ and $M\leq N$, if $o(y)>p^s$ and $M\langle y\rangle \leq G$, then  $LCM_p(\langle y\rangle M)=\langle y\rangle M$.
To prove our second main result we need the following  generalizations of the    Frobenius'  theorem.

 \begin{thm}
 \label{divv22}

  Let $G$ be a  finite  group of order $n$, and let $P\in Syl_p(G)$.
    Suppose that $P$ has an abelian      $A$-regular subgroup $N$ of order $p^r$ and exponent $p^s$.
    Let $d$ be a divisor of $n_{p'}$ and $p^s\leq p^j\mid exp(P)$.
Then $dp^{r+j-s}\mid |Sol(1,dp^j,G)|$.
 
 \end{thm}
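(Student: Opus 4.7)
I plan to prove Theorem~\ref{divv22} by induction on $j$, beginning with the base case $j = s$. The starting point is the decomposition of each element $y \in G$ as the unique commuting product $y = y_p y_{p'}$ of its $p$-part and $p'$-part: since $\gcd(d,p)=1$, one has $y \in Sol(1, dp^j, G)$ iff $y_{p'}^{d} = 1$ and $y_p^{p^j} = 1$. Hence
\[
|Sol(1, dp^j, G)| \;=\; \sum_{x} \#\{z \in C_G(x) : z^{p^j} = 1\},
\]
where $x$ ranges over $p'$-elements of $G$ with $x^d = 1$. Grouping by $G$-conjugacy class of $x$, Frobenius' theorem (applied to $G$ for the factor $d$ and to each $C_G(x)$ for some $p$-power) yields the weaker divisibility $dp^j \mid |Sol(1, dp^j, G)|$; my task is to extract the extra factor $p^{r-s}$ from the presence of the abelian $A$-regular subgroup $N$.

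For the base case $j = s$, I want $dp^r \mid |Sol(1, dp^s, G)|$. For each class representative $x$ with $N \leq C_G(x)$, the abelian rank-$r$ subgroup $N$ of $C_G(x)$ contributes its $p^r$ elements of order dividing $p^s$, and a refinement of Frobenius inside $C_G(x)$ using the subgroup lattice of $N$ upgrades the local divisibility from $p^s$ to $p^r$. For class representatives $x$ with $x \notin C_G(N)$, I use the $A$-regularity of $N$ --- read as a constraint on how $N$ sits inside the global $p$-structure of $G$ --- to argue that these contributions also assemble into blocks of size divisible by $p^r$. Summing over conjugacy classes then produces $dp^r \mid |Sol(1, dp^s, G)|$.

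For the inductive step $j \to j+1$, I relate $|Sol(1, dp^{j+1}, G)|$ to $|Sol(1, dp^j, G)|$ via the power map $\phi \colon y \mapsto y^p$, which sends $Sol(1, dp^{j+1}, G)$ into $Sol(1, dp^j, G)$. For each $y$ in a fiber whose $p$-part has order exactly $p^{j+1}$ (the ``new'' elements), one has $o(y_p) > p^s$, so the $A$-regularity hypothesis applies: for any $M \leq N$ normalized by $y_p$, the subgroup $\langle y_p \rangle M$ equals $LCM_p(\langle y_p \rangle M)$. This $LCM_p$-regular structure lets me organize the fiber-contributions into cosets of $M$, producing blocks of size divisible by $p^{r-s+1}$ and hence lifting the divisibility from $dp^{r+j-s}$ to $dp^{r+j+1-s}$.

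The main obstacle will be the base case, in particular the contribution of $p'$-elements $x$ with $x \notin C_G(N)$. The $A$-regularity condition is stated only for $p$-elements of order strictly greater than $p^s$, so it does not directly govern the order-dividing-$p^s$ elements counted by $Sol(1, dp^s, G)$. Bridging this gap will require a conjugation-averaging argument that propagates the $LCM_p$-regular structure of $\langle y_p \rangle M$ (for higher-order $p$-elements $y_p \in P$) down to control how the order-$\leq p^s$ $p$-elements interact with $N$ inside the various centralizers $C_G(x)$. Careful bookkeeping of commutators and centralizer orbits, combined with Frobenius applied to the relevant subgroups, will be needed to make this rigorous.
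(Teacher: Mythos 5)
Your induction runs in the wrong direction, and this creates two problems, one of which you have noticed and one of which you have not. The one you noticed: your base case $j=s$ is precisely the regime where the $A$-regularity hypothesis (which only constrains $p$-elements of order strictly greater than $p^s$) gives no leverage, and the ``conjugation-averaging'' you propose to bridge this is not an argument. The one you have not noticed is structural: in your upward step you want to pass from $p^{r+j-s}\mid |Sol(1,dp^j,G)|$ to $p^{r+j+1-s}\mid |Sol(1,dp^{j+1},G)|$, but $|Sol(1,dp^{j+1},G)|=|Sol(1,dp^j,G)|+|B'|$ where $B'$ is the set of new elements, and no matter how divisible $|B'|$ is, the first summand is only known to be divisible by $p^{r+j-s}$ --- one power of $p$ short of the target. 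So the upward induction cannot close even granting the base case.

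The paper avoids both problems by inducting \emph{downward} on $j$ from $j=t$ with $\exp(P)=p^t$, where plain Frobenius already gives $|P|\mid |Sol(1,|P|,G)|$. The inductive hypothesis then supplies the \emph{stronger} divisibility $p^{r+j+1-s}\mid |Sol(1,p^{j+1},G)|$, and one only needs to show $p^{r+j-s}\mid |B_{p^{j+1}}(G)|$ to conclude $p^{r+j-s}$ divides the difference $|Sol(1,p^j,G)|$. The elements of exact order $p^{j+1}$ all have order $>p^s$, so $A$-regularity applies to them; the paper partitions $B_{p^{j+1}}(G)$ into disjoint sets $V_M=\{b: \langle b\rangle M\leq G,\ N_N(\langle b\rangle M)=M\}$ indexed by subgroups $M\leq N$, and extracts the divisibility of each $|V_M|$ from the $N$-conjugation orbit of $M\langle b\rangle$ (of length $[N:M]$) combined with induction applied to the subgroup $M\langle b\rangle$. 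Your treatment of the divisor $d$ via $p$-part/$p'$-part decomposition and centralizers is essentially the paper's (it reduces to $|Sol([y],\cdot,G)|=|y^G|\,|Sol([y],\cdot,C_G(y))|$ and induction), but the core $d=1$ argument needs to be rebuilt around the reversed induction; your power-map fiber analysis is not needed and would be hard to control in any case.
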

 \begin{proof}
We proceed by induction on $d$. First, suppose that $d=1$.
Now, we proceed by induction on $j$. Let $exp(P)=p^t$.
If $p^t=p^j$, then by the Frobenius' theorem, $|P|\mid |Sol(1,|P|,G)|=|Sol(1,p^t,G)| $, so  $p^{r+j-s}\mid |Sol(1,p^t,G)|$. Hence, we may assume that  $t>j$.
By the induction hypothesis, $p^{r+j+1-s} \mid |Sol(1,p^{j+1},G)|$. We have $Sol(1,p^{j+1},G)\setminus Sol(1,p^{j},G)=B_{p^{j+1}}(G)$. 
Let $U=B_{p^{j+1}}(G)$, and let $M$ be a   subgroup of $N$ of order $p^{r_M}$ and of  exponent $p^{s_M}$, and let $$V_M=\{b\in U:\langle b\rangle M\leq G, \ N_N(\langle b\rangle M)=M\}.$$
Furthermore, since $N$ is abelian, for any $x\in N$ and $b\in V_M$, we have $(\langle b\rangle M)^x)=\langle b^x\rangle M.$
If $z\in N_N(\langle b^x\rangle M)$, then 
$z\in N_N(\langle b\rangle M)$, so $z\in M$,
consequently, $b^x\in V_M$.
Let $u\in B_{p^{j+1}}(M\langle b\rangle)$.
Then $M\langle b\rangle=M\langle u\rangle$, so $u\in V_M$, also,  $M\langle b\rangle\cap M\langle h\rangle\leq  M\langle b^2\rangle$ for any $h\in B_{2^{j+1}}(V_M)\setminus B_{2^{j+1}}(M\langle b\rangle).$

If $G= M\langle b\rangle$, then by our assumption 
$exp(\Omega_1(M\langle b\rangle))=p^j$.
It follows that 
$$p^{r+j-s}\mid  |B_{p^j}(\Omega_1(G)|=|Sol(1,p^j,G)|.$$

So  $G\neq M\langle b\rangle$.  
By the induction hypothesis, $$p^{r_M+j-s_M}\mid |\Omega_j(M\langle b\rangle)|=|Sol(1,p^j,M\langle b\rangle)|.$$ 
Since $exp(\Omega_j(M\langle b\rangle))=p^j$ and $p^{r_M+j-s_M}\mid |\Omega_j(M\langle b\rangle)|$, we have 
$$p^{r_M+j-s_M}\mid |B_{p^{j+1}}(M\langle b\rangle)|=|M\langle b\rangle|-|\Omega_j(M\langle b\rangle)|.$$
Clearly,  $[N:N_N(M\langle b\rangle)]=[N:M].$
Since $p^{r-r_M-(s-s_M)}\mid [N:M]$, we deduce that  
$$p^{r+j-s}\mid [N:M]|B_{p^{j+1}}(M\langle b\rangle)| =|B_{p^{j+1}}(M\langle b^N\rangle)|.$$

 Additionally, since $V_M$ is a union of  disjoint subsets $$B_{p^{j+1}}(M\langle u_{1}^N\rangle),\ldots,B_{p^{j+1}}(M\langle u_{q}^N\rangle),$$ 
 we conclude that  $p^{r+j-s}\mid |V_M|$. 
Let $S$ denote the set of all subgroups of $N$. Consider $M,K \in S$ such that $M \neq K$, and let $a \in V_M \cap V_K$. Then, both $\langle a \rangle K$ and $\langle a \rangle M$ are subgroups of $G$. Since $N$ is an abelian group, the subgroup $H:=\langle a \rangle KM$ is a subgroup of $G$. Observing that $\langle a \rangle K$ is normal in $H$, we deduce $KM \leq N_N(\langle a \rangle K)=K$, which contradicts the assumption. Therefore, $V_M \cap V_K = \varnothing$.

Considering $B_{p^{j+1}}(G)=\bigcup_{M \in S}V_M$, and noting that $p^{r+j-s}$ divides $|V_M|$ for any $M \in S$, we infer that $p^{r+j-s}$ divides $|B_{p^{j+1}}(G)|$. Consequently,

$$p^{r+j-s}\mid |Sol(1,p^{j+1},G)|-|B_{p^{j+1}}(G)|=|Sol(1,p^j,G)|.$$

So $d>1$.
Let $q$ be a prime divisor of $d$, and let 
$gcd(d_q,exp(G))=q^t$.
Then 
$$|Sol(1,dp^j,G)|=\sum_{i=0}^t|Sol(B_{q^i}(G),d_{q'}p^j,G)|.$$
By the induction hypothesis,
  $p^{r+j-s}\mid |Sol(1,d_{q'}p^j,G)|$. 
Let $1\leq i\leq t$, and let $y\in B_{q^i}(G)$.
By Proposition 2.3 of \cite{mohsen2}, $$Sol([y],d_{q'p},G)|=|y^G||Sol([y],d_yp^j,C_G(y))|$$ where $d_y=gcd(|C_G(y)|,d_{q'})$.
Let $Q\in Syl_p(C_G(y))$. We may assume that $Q\leq P$. Let 
$p^{r_y}=|N\cap Q|$, $exp(N\cap Q)=p^{s_y}$ and $gcd([Q:N\cap Q],p^j)=p^{j_y}$. 
By the  induction hypothesis, 
 
$$p^{r_y+j_y-s_y}\mid |Sol([y],d_yp^{j_y},C_G(y))|=|Sol(1,d_yp^{j_y},\frac{C_G(y)}{\langle y\rangle})|.$$
 
Since $ p^{r-r_y+j-j_y-(s-s_y)}\mid |y^G|$, we deduce that 
$p^{r+j-s}\mid Sol([y],d_yp^{j_y},G)|$.
Consequently,
$p^{r+j-s}\mid |Sol(B_{q^i}(G),d_{q'}p,G)|$. By Proposition 2.3 of \cite{mohsen2}, $d\mid |Sol(1,dp^j,G)|$,  so 
$dp^{r+j-s}\mid |Sol(1,dp^j,G)|$.

 \end{proof}
 \begin{rem}\label{rem1}
  Note that in Theorem \ref{divv22}, if $N$ is a Dedekind $p$-subgroup, the theorem remains true with a slight modification in its proof.
 \end{rem}
More generally, in the following theorems we generalize   Proposition 2.3 of \cite{mohsen2} and Theorem \ref{divv22}.
 \begin{thm}
 \label{divv2}
    Let $G$ be a finite group of order $n$, and let $P\in Syl_p(G)$. 
    Suppose that $P$ contains a Dedekind  $A$-regular $p$-subgroup $N$ of order $p^r$ and exponent $p^s$.
Let $d$ be a divisor of $n_{p'}$, and suppose $p^s \leq p^j \mid exp(P)$.
For any element $y \in G$ whose order is co-prime to $dp$, the cardinality $|Sol([y], dp^j, G)|$ is a multiple of $  lcm(\varphi(o(y)), p^{r+j-s} d_{o(y)'})$.

 \end{thm}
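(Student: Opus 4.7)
The plan is to extend the inductive strategy of Theorem~\ref{divv22} while tracking a new multiplicative factor $\varphi(o(y))$ that counts the generators of $\langle y\rangle$. First I would apply Proposition~2.3 of \cite{mohsen2} to reduce the count to the centralizer:
$$|Sol([y], dp^j, G)| = |y^G|\cdot |Sol([y], d_y p^j, C_G(y))|,$$
where $d_y=\gcd(d,|C_G(y)|)$. Writing $C:=C_G(y)$ and $\bar C:=C/\langle y\rangle$, the element $y$ is central in $C$, and the hypothesis $\gcd(o(y),dp)=1$ guarantees that every divisor of $d_y p^j$ is coprime to $o(y)$.

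Next I would prove the identity
$$|Sol([y], d_y p^j, C)| = \varphi(o(y))\cdot |Sol(1, d_y p^j, \bar C)|$$
by a fiber count over the projection $\pi:C\to\bar C$. For $\bar x\in Sol(1, d_y p^j,\bar C)$ of order $t$ in $\bar C$ (so $t\mid d_y p^j$), choose $x_0\in\pi^{-1}(\bar x)$ with $x_0^t=y^k$. Since $\gcd(t,o(y))=1$, the powers $(x_0y^i)^t=y^{k+it}$, as $i$ ranges over $0,\dots,o(y)-1$, hit every element of $\langle y\rangle$, so exactly $\varphi(o(y))$ of the $o(y)$ preimages of $\bar x$ lie in $Sol([y], d_y p^j, C)$. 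Checking that no larger exponent $m'\mid d_y p^j$ can contribute additional preimages reduces (because $\gcd(m',o(y))=1$) to the same coprimality condition on $k+it$. This immediately delivers divisibility of $|Sol([y], dp^j, G)|$ by $\varphi(o(y))$.

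To obtain divisibility by $p^{r+j-s}d_{o(y)'}$, I would apply Theorem~\ref{divv22} (in the Dedekind form provided by Remark~\ref{rem1}) inside $\bar C$. Choose $Q\in Syl_p(C)$ with $Q\leq P$ and set $N_y=N\cap Q$, $p^{r_y}=|N_y|$, $p^{s_y}=exp(N_y)$, and let $p^{j_y}$ be chosen exactly as in the proof of Theorem~\ref{divv22}. Because $\gcd(o(y),p)=1$, the projection $Q\to\bar Q$ is an isomorphism carrying $N_y$ to a Dedekind, $A$-regular $p$-subgroup of $\bar Q$, so Theorem~\ref{divv22} applied inside $\bar C$ yields
$$d_y\cdot p^{r_y+j_y-s_y}\;\Big|\;|Sol(1, d_y p^{j_y}, \bar C)|.$$
The $p$-part of $|y^G|$ has valuation at least $(r-r_y)+(j-j_y)-(s-s_y)$ by the Sylow comparison used in the proof of Theorem~\ref{divv22}, and every prime divisor of $d/d_y$ divides $|y^G|$ and is coprime to $o(y)$, hence lies in $d_{o(y)'}$. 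Combining these divisibilities gives $p^{r+j-s}\cdot d_{o(y)'}\mid |y^G|\cdot|Sol(1, d_y p^{j_y}, \bar C)|$, and hence $p^{r+j-s}d_{o(y)'}\mid|Sol([y], dp^j, G)|$. Together with the $\varphi(o(y))$ divisibility established earlier, this gives divisibility by their $lcm$ and completes the argument.

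The main obstacle will be the fine bookkeeping in the previous paragraph: one must verify that the $p$-adic valuation of $|y^G|$ really absorbs the gap $(r-r_y)+(j-j_y)-(s-s_y)$ between the bound obtained inside $\bar C$ and the required bound, and that the prime-to-$p$ part $d_{o(y)'}/d_y$ is genuinely supplied by $|y^G|$. This mirrors the delicate step in the proof of Theorem~\ref{divv22}, so the same Sylow-index comparison and coprime divisibility observations should transfer over with only cosmetic changes.
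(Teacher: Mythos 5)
Your proposal is correct and follows essentially the same route as the paper: reduce to the centralizer via Proposition~2.3 of \cite{mohsen2}, obtain the $p$-power divisibility from the Dedekind $A$-regular subgroup $N\cap Q$ inside $C_G(y)/\langle y\rangle$, absorb the deficit $(r-r_y)+(j-j_y)-(s-s_y)$ and the prime-to-$p$ part into $|y^G|$, and combine with the $\varphi(o(y))$ divisibility via the $lcm$. The only cosmetic differences are that you apply Theorem~\ref{divv22} (with Remark~\ref{rem1}) directly to the quotient where the paper invokes its induction on $|G|$, and that you unpack the $\varphi(o(y))$ fiber count that the paper simply cites from Proposition~2.3.
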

 \begin{proof}
    We proceed by induction on the order of the group $|G|$.
     If $y=1$, the result follows from Theorem $\ref{divv22}$ and \ref{rem1}.
So, let's consider the case where $y\neq 1$.
     Then $$Sol([y],d_{q'p},G)|=|y^G||Sol([y],d_yp^j,C_G(y))|$$ where $d_y=gcd(|C_G(y)|,d_{q'})$.
Let $Q\in Syl_p(C_G(y))$. We may assume that $Q\leq P$. Let 
$p^{r_y}=|N\cap Q|$, $exp(N\cap Q)=p^{s_y}$  and $gcd([Q:N\cap Q],p^j)=p^{j_y}$. 
By the  induction hypothesis, 
$$p^{r_y+j_y-s_y}\mid |Sol([y],d_yp^{j_y},C_G(y))|=|Sol(1,d_yp^{j_y},\frac{C_G(y)}{\langle y\rangle})|.$$
Since $ p^{r-r_y+j-j_y-(s-s_y)}\mid |y^G|$, we deduce that 
$p^{r+j-s}\mid Sol([y],d_yp^{j_y},G)|$.
Consequently, 
$p^{r+j-s}\mid | Sol([y],d_{q'}p^j,G)|$.
From Proposition 2.3 of \cite{mohsen2}, $lcm(d_{o(y)'}, \varphi(o(y))\mid | Sol([y],d_{q'}p^j,G)|$.
Consequently, 
$$lcm(\varphi(o(y)), p^{r+j-s}d_{o(y)'}) \mid |Sol([y],dp^j,G)|.$$  
     
 \end{proof}

 \begin{cor}
  \label{divv2222}

Let $G$ be a finite group of order $n$, and let $\pi$ be a subset of prime divisors of $n$. Suppose $G$ for any $p_i\in \pi$ possesses a Dedekind $A$-regular $p_i$-subgroup $N_i$ of order $p_i^{r_i}$ and exponent $p^{s_i}$. Let $\Pi_{p_i\in \pi}p_i^{s_i}=s$ and $\Pi_{p_i\in \pi}p_i^{r_i}=r$. Let $d$ be a divisor of $n_{s'}$, and $j$ be a divisor of $exp(G)_{s}$ such that $s \mid j$. Then $\frac{drj}{s} \mid |Sol(1,dj,G)|$.

 \end{cor}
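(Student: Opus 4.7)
The plan is to reduce the multi-prime statement to one application of Theorem \ref{divv2} (with $y=1$) per prime of $\pi$, and then combine the resulting divisibilities by a least common multiple. The base case $|\pi|=1$ is immediate: specialising Theorem \ref{divv2} to $y=1$ gives $\varphi(1)=1$ and $d_{1'}=d$, so the bound collapses to $dp^{r+j-s}\mid |Sol(1,dp^j,G)|$, which is exactly the single-prime form of the claim.

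For the general case, I would first write $j=\prod_{p_i\in \pi}p_i^{j_i}$; the hypothesis $s\mid j$ forces $s_i\le j_i$ for every $i$, and $j\mid \exp(G)_s$ guarantees $p_i^{j_i}\mid \exp(P_i)$ for each Sylow $p_i$-subgroup $P_i$. Fixing any $p_k\in\pi$, I set $d_k:=d\cdot\prod_{i\ne k}p_i^{j_i}$. Since $d\mid n_{s'}$ is coprime to every prime of $\pi$, the factor $d_k$ is coprime to $p_k$ and divides $n_{p_k'}$. Applying Theorem \ref{divv2} with $y=1$, with $d$ replaced by $d_k$, prime $p_k$, and exponent $p_k^{j_k}$, yields
\[
d_k\, p_k^{r_k+j_k-s_k} \mid |Sol(1,d_kp_k^{j_k},G)| = |Sol(1,dj,G)|.
\]

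Carrying this out for every $k$ and combining the $|\pi|$ divisibilities, I obtain
\[
\mathrm{lcm}_{k}\Bigl(d\cdot p_k^{r_k+j_k-s_k}\cdot\prod_{i\ne k}p_i^{j_i}\Bigr) \mid |Sol(1,dj,G)|.
\]
For any fixed prime $p_m\in\pi$, the $p_m$-exponent appearing in the $k$-th term is $r_m+j_m-s_m$ when $k=m$ and $j_m$ otherwise; since any $p$-group has exponent bounded by its order we have $r_m\ge s_m$, so the maximum over $k$ is $r_m+j_m-s_m$. The primes outside $\pi$ contribute exactly $d$ in every term. Hence the left-hand side equals $d\prod_m p_m^{r_m+j_m-s_m}=drj/s$, which finishes the proof.

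The only point requiring care is bookkeeping: for each $k$ one must confirm that the hypotheses of Theorem \ref{divv2} apply, namely $d_k$ coprime to $p_k$ and $p_k^{s_k}\le p_k^{j_k}\mid \exp(P_k)$, and then verify that the LCM of $|\pi|$ divisibility statements really does reconstruct the factor $drj/s$. Both verifications follow at once from the standing assumptions $d\mid n_{s'}$, $s\mid j$, $j\mid \exp(G)_s$, together with the trivial inequality $r_i\ge s_i$; no extra group-theoretic input beyond Theorem \ref{divv2} is needed.
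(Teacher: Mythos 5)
Your proof is correct and follows essentially the same route as the paper: apply the single-prime generalization of Frobenius' theorem (Theorem \ref{divv22}, i.e.\ Theorem \ref{divv2} with $y=1$) once for each $p_k\in\pi$ and combine the resulting pairwise-coprime divisibilities. If anything, your bookkeeping via $d_k=d\cdot\prod_{i\neq k}p_i^{j_i}$ and the lcm is more careful than the paper's one-line product, which as written drops the factor $d$ from the identity $\frac{drj}{s}=\Pi_{p\in\pi}\frac{j_pr_p}{s_p}$.
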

 \begin{proof}
 
According to  Theorem \ref{divv22},
$\frac{j_pr_p}{s_p}\mid |Sol(1,j,G)|$ for any $p\in \pi$. It follows that 
$$\frac{drj}{s} =\Pi_{p\in \pi}\frac{j_pr_p}{s_p}\mid |Sol(1,dj,G)|.$$

 \end{proof}
 \begin{cor}
    Let $G$ be a finite group of order $n$, $P\in Syl_p(G)$ such that $exp(\Omega_1(P))=p$, and let $A$ be an elementary abelian $p$-subgroup of $G$ with maximal rank. Suppose there exists a divisor $d$ of $n_{p'}$ and integers $t$ and $j$ such that $p^j$ divides the exponent of $G$ and $|Sol(1,p^jd,G)|\leq p^{t}d$. Then, $|A|\leq p^{t-j+1}$.
 \end{cor}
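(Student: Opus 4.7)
The plan is to apply Theorem~\ref{divv22} directly with $N = A$. Since $A$ is elementary abelian, it is abelian (hence Dedekind), of order $|A| = p^r$ for some $r$ to be bounded, and of exponent $p^s = p$, so $s = 1$. The hypothesis $p^j \mid \exp(G)$ gives $j \geq 1$, hence $p^s = p \leq p^j$, so the hypotheses of the theorem are met once $A$-regularity is checked. The only nontrivial prerequisite is therefore to verify that $A$ is an $A$-regular subgroup in the sense of Section~\ref{sec2}.

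To check $A$-regularity, let $y \in Sol(1,|P|,G)$ be a $p$-element with $o(y) > p$, and let $M \leq A$ be such that $H := M\langle y\rangle \leq G$. I must show that $LCM_p(H) = H$, i.e.\ that $o(hz) \mid lcm(o(h),o(z))$ for all $h,z \in H$. The hypothesis $\exp(\Omega_1(P)) = p$ transfers to every Sylow $p$-subgroup of $G$ by Sylow conjugacy, so any Sylow $p$-subgroup $Q \supseteq H$ satisfies $\exp(\Omega_1(Q)) = p$; this forces any product of order-$p$ elements inside $H$ to again have order at most $p$. Combining this with $M$ being elementary abelian and the rank-maximality of $A$ (which bounds the elementary abelian rank available inside $H$ by $r$), one deduces the LCM property throughout $H$ and concludes that $A$ is $A$-regular.

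With $A$-regularity established, Theorem~\ref{divv22} applied to $N = A$ gives
\[
dp^{r+j-1} \;=\; dp^{r+j-s} \;\bigm|\; |Sol(1, dp^j, G)|.
\]
Combined with the hypothesis $|Sol(1, p^j d, G)| \leq p^t d$, this forces $dp^{r+j-1} \leq p^t d$, hence $r + j - 1 \leq t$, and therefore $|A| = p^r \leq p^{t - j + 1}$, as desired. The main obstacle is the verification of $A$-regularity in the middle step; once that is in hand, the bound on $|A|$ is an immediate consequence of Theorem~\ref{divv22}.
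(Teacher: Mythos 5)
Your overall strategy is exactly the one the corollary is meant to encode: the paper states this corollary with no proof because it is intended as an immediate numerical consequence of Theorem \ref{divv22} applied with $N=A$, $s=1$, and your final step ($dp^{r+j-1}\mid |Sol(1,dp^j,G)|\leq p^td$ forces $r\leq t-j+1$) is correct and is all the paper itself would supply.

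The gap is in the middle step, and you have put your finger on it yourself without actually closing it. The $A$-regularity of $A$ requires $LCM_p(M\langle y\rangle)=M\langle y\rangle$, i.e.\ $o(hz)\mid lcm(o(h),o(z))$ for \emph{all} pairs $h,z$ in the $p$-group $H=M\langle y\rangle$, equivalently that $Sol(1,p^k,H)$ is a subgroup of exponent dividing $p^k$ for every $k$. The two facts you invoke --- that $exp(\Omega_1(Q))=p$ for the ambient Sylow subgroup $Q$, and that rank-maximality of $A$ bounds the elementary abelian rank inside $H$ --- only control the bottom layer $k=1$: they show that a product of order-$p$ elements again has order $p$. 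They say nothing about, for instance, two elements of order $p^2$ in $H$ whose product has order $p^3$; ruling that out is precisely the content of $A$-regularity at level $k=2$, and neither hypothesis addresses it. (For $p$-groups of class less than $p$ this is automatic by regularity, which is presumably why the related Theorem \ref{sec} carries the hypothesis $r_i\leq p_i-1$; but no such restriction appears here, and for $p=2$ in particular the claim is genuinely delicate.) So the sentence ``one deduces the LCM property throughout $H$'' is an assertion, not a deduction, and it is the only nontrivial point of the whole proof. To be fair to you, the paper leaves exactly the same point unaddressed; but a complete proof must either verify $A$-regularity of $A$ from $exp(\Omega_1(P))=p$ (at all levels $p^k$, not just $k=1$) or replace Theorem \ref{divv22} by a version whose hypotheses are actually checked.
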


 The following lemma demonstrates that if a Sylow $p$-subgroup of $G$ with odd order is not cyclic, then $G$ possesses an elementary abelian normal subgroup $N$ of order $p^2$.
 \begin{lem}\label{noncyc}
     Let   $P$ be a finite $p$-group of odd order.
     If $P$ is not  cyclic, then $P$  has a an elementary abelian    subgroup $N$ of order $p^2$.  

 \end{lem}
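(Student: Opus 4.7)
The plan is to proceed by induction on $|P|$. The base case $|P|=p^{2}$ is immediate, since a non-cyclic group of order $p^{2}$ is $C_{p}\times C_{p}$ and hence is itself an elementary abelian subgroup of the required form. For the inductive step, with $|P|=p^{n}\ge p^{3}$, I would pick a maximal subgroup $M$ of $P$; since $[P:M]=p$, $M$ is normal of order $p^{n-1}$. If $M$ is non-cyclic, the induction hypothesis applied to $M$ yields the required $C_{p}\times C_{p}$ inside $M\le P$. So we may assume $M=\langle m\rangle$ is cyclic and choose $x\in P\setminus M$, so that $P=\langle m,x\rangle$ and $x^{p}\in M$.

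Write $xmx^{-1}=m^{k}$. Because $x^{p}$ lies in the abelian group $M$, it commutes with $m$, which forces $k^{p}\equiv 1\pmod{p^{n-1}}$. Here the odd-$p$ hypothesis enters decisively: the $p$-torsion in $(\mathbb{Z}/p^{n-1}\mathbb{Z})^{\times}$ is cyclic of order $p^{n-2}$ and generated by $1+p^{n-2}$, so $k\equiv 1+jp^{n-2}\pmod{p^{n-1}}$ for some $j\in\{0,1,\dots,p-1\}$. If $j\equiv 0$, then $x$ centralizes $M$, so $P$ is abelian and non-cyclic, and the structure theorem for finite abelian groups supplies $C_{p}\times C_{p}$. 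Otherwise, a direct conjugation check shows that $z:=m^{p^{n-2}}$ lies in $Z(P)$: the conjugate $m^{p^{n-2}(1+jp^{n-2})}$ differs from $z$ by the factor $m^{jp^{2n-4}}$, which is trivial since $2n-4\ge n-1$ for $n\ge 3$.

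It remains to produce a second element of order $p$ that is not in $M$; it will automatically commute with $z$. Exploiting the abelianness of $M$, a short induction on $\ell$ gives
\[
(xm^{t})^{\ell}=x^{\ell}\,m^{t(1+k^{-1}+k^{-2}+\cdots+k^{-(\ell-1)})},
\]
and a brief binomial expansion yields
\[
1+k^{-1}+\cdots+k^{-(p-1)}\equiv p\pmod{p^{n-1}},
\]
so $(xm^{t})^{p}=x^{p}m^{tp}$. Writing $x^{p}=m^{s}$: if $p\nmid s$ then $x$ has order $p^{n}$ and $P=\langle x\rangle$ is cyclic, contradicting our assumption; hence $p\mid s$, and choosing $t$ with $tp\equiv -s\pmod{p^{n-1}}$ yields $y:=xm^{t}$ with $y^{p}=1$, and $y\neq 1$ because $y\notin M$. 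Then $\langle y,z\rangle\cong C_{p}\times C_{p}$, as needed. The main technical obstacle is the congruence displayed above: its expansion contains a correction term of the form $jp^{n-1}(p-1)/2$, and it is precisely the integrality of $(p-1)/2$ for odd $p$ that forces this term to vanish modulo $p^{n-1}$. This is also the exact step that breaks down at $p=2$, in perfect agreement with the generalized quaternion groups being the sole exceptions to the lemma in the $2$-group case.
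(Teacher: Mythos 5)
Your proof is correct, but it takes a genuinely different route from the paper's. The paper disposes of the lemma in two lines by citing Theorem~4.10 of Gorenstein's book: a non-cyclic $p$-group of odd order has a normal abelian non-cyclic subgroup $M$, and then $\Omega_1(M)$ already contains the desired $C_p\times C_p$ (in fact a \emph{normal} one). You instead give a self-contained induction on $|P|$ that reduces to the case of a cyclic maximal subgroup $M=\langle m\rangle$ and then essentially re-derives the classification of odd-order $p$-groups with a cyclic subgroup of index $p$: the congruence $k\equiv 1+jp^{n-2}\pmod{p^{n-1}}$, the central element $z=m^{p^{n-2}}$, and the element $y=xm^{t}$ of order $p$ outside $M$ obtained from $S_p\equiv p\pmod{p^{n-1}}$. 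I checked the computations ($S_p = p - jp^{n-1}(p-1)/2$, the centrality of $z$ via $2n-4\ge n-1$, and the exclusion of $p\nmid s$) and they are all sound; the only blemish is terminological — the subgroup of $(\mathbb{Z}/p^{n-1}\mathbb{Z})^{\times}$ generated by $1+p^{n-2}$ is the subgroup of elements of order dividing $p$, which has order $p$, not $p^{n-2}$ — but the conclusion you draw from it is the right one. What each approach buys: the paper's argument is short and yields normality of the $C_p\times C_p$ for free (which is the stronger property advertised in the sentence preceding the lemma, even though the lemma statement itself does not require it), at the cost of invoking a nontrivial classification result; your argument is elementary and makes completely transparent why the statement fails exactly at $p=2$ — the integrality of $(p-1)/2$ — matching the generalized quaternion exceptions. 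Note that your subgroup $\langle y,z\rangle$ need not be normal in $P$, so if normality were ever needed downstream you would have to add an extra step (e.g., replace it by a suitable conjugate-closed construction or fall back on the cited theorem).
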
 
 \begin{proof}
     It follows from  Theorem 4.10 of \cite{Gor}, that $P$ has a normal abelian non-cyclic subgroup $M$.
     Then $\Omega_1(M)$ is an elementary abelian normal subgroup of $P$ of order bigger than $p$. Therefore $\Omega_1(M)$ contains a normal subgroup $N$ of $G$ such that $N\cong C_p\times C_p$.
 \end{proof}
 To establish the second extension of the Frobenius' theorem, we rely on a crucial result by
Zemlin \cite{Zal}. Let $H$ be a Sylow $p$-group of $G$. We define $m(H,k,p)$: If 
$Sol(1,p^k,H)$ is a group, let its order  be $p^r$. Then $m(H,k,p)= 
min\{r,k(p-l)\}$. If $Sol(1,p^k,H)$  is not a group we let $m(H,k,p)= k(p-l)$.

\begin{lem}(see 3.8 in \cite{Zal})\label{Zalm}
     Let $G$ he a 
finite group, and $H$ a subgroup of $G$. Let $C$ be a class of elements 
of $G$ conjugate with respect to $H$. Let $p$ and $k$ be  positive integers where $p$ is a prime number.  Let $HyH$ be any double coset of $H$ in $G$. Then 
$|Sol(C,p^k,HyH)|$ is a multiple of 
$p^{m(H,k,p)}.$

\end{lem}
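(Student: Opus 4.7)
The plan is to follow a two-step approach in the spirit of classical generalizations of Frobenius' theorem, combining a Brauer--Wielandt style tuple-counting argument with a structural argument on $Sol(1,p^k,H)$.

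First, I would fix a double coset decomposition of $G$ and work inside $HyH$. Writing $HyH$ as a disjoint union of right cosets of $H$, one has a natural left $H$-action by conjugation that preserves $Sol(C,p^k,HyH)$, since conjugation by $h \in H$ commutes with $p^i$-th powers and sends $G$-conjugates of $c$ to $G$-conjugates of $c$. This organizes the count into $H$-orbit contributions and lets me compute $|Sol(C,p^k,HyH)|$ locally on each right coset $Hyh_i$, reducing to divisibility statements on orbit sums.

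Next, I would prove the two divisibility bounds separately and take the one appropriate to the case at hand. For the bound $p^{k(p-1)}$, I would count tuples $(x_1,\ldots,x_{p^k})$ of elements whose product lies in $HyH$ with the appropriate $G$-conjugacy constraint on the product; the cyclic group $C_{p^k}$ acts on these tuples by rotation, fixed points correspond exactly to elements of $Sol(C,p^k,HyH)$, and Legendre's formula applied to the non-fixed orbits yields the $p$-valuation $k(p-1)$. For the bound $p^r$ in the case when $S := Sol(1,p^k,H)$ is a subgroup of $H$ of order $p^r$, I would use the conjugation action of $S$ on $Sol(C,p^k,HyH)$ together with an inductive argument: either all $S$-orbits have size a $p$-power summing to a multiple of $p^r$, or one reduces to a proper subgroup by passing to the centralizer of a fixed orbit representative and applying induction on $|G|$, similar in spirit to the inductive scheme already used in the proof of Theorem~\ref{divv22}.

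The main obstacle will be the interaction between the $G$-conjugacy condition defining $C^G = c^G$ and the $H$-symmetry of the double coset $HyH$. When $x^{p^i}$ is $G$-conjugate to $c$, the conjugating element need not lie in $H$, so the natural left $H$-action by conjugation does not transparently control the fiber structure of the map sending $x$ to the $G$-conjugacy class of $x^{p^i}$. Resolving this requires careful bookkeeping of how $G$-conjugation permutes the right cosets comprising $HyH$ and likely a transfer-type argument to reduce the global $G$-conjugacy relation to a computation inside $H$; this is the technical heart of the argument and is precisely what forces the minimum $\min\{r, k(p-1)\}$ rather than a single uniform bound.
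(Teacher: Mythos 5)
First, note that the paper offers no proof of Lemma \ref{Zalm} at all: it is quoted verbatim from item 3.8 of Zemlin's dissertation \cite{Zal}, so there is no internal argument to compare yours against. Judged on its own terms, your proposal has two genuine gaps, one in each half of the claimed dichotomy. For the bound $p^{k(p-1)}$, the rotation action of $C_{p^k}$ on $p^k$-tuples is the standard device for proving divisibility by $p$ (each non-fixed orbit has size divisible by $p$, so the fixed-point count is congruent to the total modulo $p$); it does not by itself produce the exponent $k(p-1)$. To extract $p^{k(p-1)}$ you would need to control both the $p$-adic valuation of the total number of tuples $(x_1,\dots,x_{p^k})$ with product in $HyH$ subject to the conjugacy constraint, and the precise distribution of orbit lengths $p^j$ for $0<j<k$; the phrase ``Legendre's formula applied to the non-fixed orbits'' does not do this, and it is not even clear that the tuple count restricted to a single double coset has large enough valuation. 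This step, which carries the entire weight of the case where $Sol(1,p^k,H)$ is not a group, is asserted rather than proved.

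Second, for the bound $p^r$ when $S:=Sol(1,p^k,H)$ is a subgroup of order $p^r$, the conjugation action of $S$ on $Sol(C,p^k,HyH)$ has orbits of size dividing $p^r$ but in general strictly smaller (any solution centralized by a nontrivial element of $S$ breaks the count), so the orbit sum need not be a multiple of $p^r$; your fallback of ``passing to the centralizer and inducting on $|G|$'' is not yet an argument, since the inductive statement needed about the centralizer is not formulated and $HyH$ does not restrict to a double coset of that centralizer. The mechanism that actually yields divisibility by $|S|$ in results of this type --- and the one used elsewhere in this paper, e.g.\ in the proof of Theorem \ref{divv22} --- is to partition the solution set into full translates of $p$-subgroups, i.e.\ to show that suitable cosets $Sx$ lie entirely inside the solution set, not to average over a conjugation action. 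Finally, your ``main obstacle'' paragraph rests on a misreading: $C$ is a single class of elements conjugate \emph{with respect to $H$} (an $H$-orbit), so conjugation by $H$ preserves both $C$ and $HyH$, and no transfer-type argument is needed to reconcile a global $G$-conjugacy condition with the $H$-symmetry of the double coset; the difficulty you identify as the technical heart is not where the difficulty lies.
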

 \begin{thm}\label{Frob3}
    Let $G$ be a finite group of order $n$, and let $P\in Sylp(G)$. Let $d$ be a
divisor of $n_{p'}$
  and $p^j\mid |P|$. Then $|Sol(1,p^jd,G)|$ is a multiple of 
$dp^{m(G,k,p)}.$
 \end{thm}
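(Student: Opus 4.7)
The plan is to parallel the two-stage argument used in the proof of Theorem \ref{divv22}, replacing the internal $A$-regular counting step with a direct appeal to Zemlin's Lemma \ref{Zalm}. I interpret $m(G,k,p)$ as $m(P,k,p)$ for $P\in Syl_p(G)$ (the convention implicit in the statement) and take $k=j$. I would proceed by induction on $d$.

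For the base case $d=1$, I would decompose $G$ as a disjoint union of $(P,P)$-double cosets, $G=\bigsqcup_{y} PyP$. Taking $H=P$ and the trivial $H$-class $C=\{1\}$ in Lemma \ref{Zalm}, each contribution $|Sol(1,p^{j},PyP)|$ is divisible by $p^{m(P,j,p)}$. Summing over the double-coset representatives yields $p^{m(G,j,p)}\mid |Sol(1,p^{j},G)|$, establishing the base case.

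For the inductive step, fix a prime divisor $q$ of $d$ and let $q^{t}=\gcd(d_{q},\exp(G))$. Partition
$$Sol(1,dp^{j},G)=\bigsqcup_{i=0}^{t} Sol(B_{q^{i}}(G),d_{q'}p^{j},G).$$
The $i=0$ term is handled by the induction hypothesis on $d$, since $d_{q'}<d$. For $i\geq 1$ and a representative $y\in B_{q^{i}}(G)$, Proposition 2.3 of \cite{mohsen2} yields $|Sol([y],d_{q'}p^{j},G)|=|y^{G}|\cdot|Sol(1,d_{y}p^{j},C_{G}(y)/\langle y\rangle)|$ with $d_{y}=\gcd(|C_{G}(y)|,d_{q'})$. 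Applying the inductive hypothesis to the strictly smaller quotient $C_{G}(y)/\langle y\rangle$ produces the Zemlin-type $p$-divisibility relative to its Sylow $p$-subgroup. The $d$-divisibility is then supplied by the divisor $\mathrm{lcm}(\varphi(o(y)),d_{o(y)'})$ of each $|Sol([y],d_{q'}p^{j},G)|$, exactly as in the closing step of the proof of Theorem \ref{divv2}.

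The main obstacle I anticipate is ensuring that the Zemlin exponent for the ambient group $G$ is preserved under the centralizer reduction: a priori, $m(Q,j,p)$ for $Q\in Syl_p(C_{G}(y))$ can be strictly smaller than $m(P,j,p)$. I would absorb this deficit into $|y^{G}|_{p}=[P:P\cap C_{G}(y)]$, using the bound $m(H,j,p)\leq j(p-1)$ built into Zemlin's definition and the fact that when $Sol(1,p^{j},P)$ is a subgroup, its intersection with $Q$ is a subgroup of $Sol(1,p^{j},Q)$, so the order deficit in the $\min$ part of $m$ is at most the $p$-part of $[P:Q]$. Once this accounting is verified, combining Proposition 2.3 of \cite{mohsen2}, Lemma \ref{Zalm}, and the inductive hypothesis delivers the full divisibility $dp^{m(G,k,p)}\mid|Sol(1,dp^{j},G)|$.
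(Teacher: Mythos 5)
Your proposal follows essentially the same route as the paper: induction on $d$, a base case via Zemlin's Lemma \ref{Zalm}, and an inductive step that partitions $Sol(1,dp^{j},G)$ over the sets $B_{q^{i}}(G)$ and reduces to centralizers via Proposition 2.3 of \cite{mohsen2}, absorbing the drop in the Zemlin exponent into $|y^{G}|_{p}$. The only cosmetic difference is in the base case, where the paper applies Lemma \ref{Zalm} with $H=G$ (so $GyG=G$ is the single double coset) rather than summing over $(P,P)$-double cosets; the centralizer-deficit accounting you flag as the main obstacle is exactly the step the paper asserts with the phrase ``since $|P|/|H|_{p}\mid|y^{G}|$'' and does not elaborate further.
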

 \begin{proof}
     We proceed by induction on $d$.
     First let $d=1$. 
     By Lemma \ref{Zalm}, $|Sol(1,p^j,GyG)|=|Sol(1,p^j,G)|$ is a multiple of 
$p^{m(G,j,p)}.$
So suppose that $d>1$.
Let $q$ be a prime divisor of $d$, and let 
$gcd(d_q,exp(G))=q^t$.
Then 
$$|Sol(1,dp^j,G)|=\sum_{i=0}^t|Sol(B_{q^i}(G),d_{q'}p^j,G)|.$$
By the induction hypothesis,
  $d p^{m(G,j)}\mid |Sol(1,d_{q'}p^j,G)|$. 
Let $1\leq i\leq t$, and let $y\in B_{q^i}(G)$.
By Proposition 2.3 of \cite{mohsen2}, $$Sol([y],d_{q'p},G)|=|y^G||Sol([y],d_yp^j,C_G(y))|$$ where $d_y=gcd(|C_G(y)|,d_{q'})$.
Let $H=C_G(y)$, $d_y=gcd(d_{o(y)'},|H|)$ and $gcd(p^j,|H|)=p^{j_y}$.
By the  induction hypothesis, 
 
$$p^{m(H,j,p)}\mid |Sol([y],d_yp^{j_y},H)|=|Sol(1,d_yp^{j_y},\frac{H}{\langle y\rangle})|.$$
 
Since $ |P|/|H|_p\mid |y^G|$, we deduce that 
$p^{m(G,j,p)}\mid |Sol([y],d_yp^{j_y},G)|$.
Consequently,
$p^{m(G,j,p)}\mid |Sol(B_{q^i}(G),d_{q'}p,G)|$. By Proposition 2.3 of \cite{mohsen2}, $d\mid |Sol(1,dp^j,G)|$,  so 
$dp^{m(G,j,p)}\mid |Sol(1,dp^j,G)|$.

 \end{proof}
 Unfortunately, in most cases where the prime number $p = 2$, Theorems \ref{divv22} and \ref{Frob3} do not provide more information than Frobenius' theorem regarding $|Sol(1, d2^j, G)|$. Therefore, we seek a different generalization of Frobenius' theorem for this specific case.
  \begin{lem}\label{lemmm}
     Let $G$ be a finite $2$-group such that $exp(G)<|G|/2$. Then for any positive integer $j$ such that $2^j\leq |G|/2$, we have $2^{j+1}$ dividing $|Sol(1,2^j,G)|$.
 \end{lem}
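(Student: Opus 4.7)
The plan is to prove the lemma by induction on $|G|$. The easy range $2^j \geq \exp(G)$ is handled at once: there $Sol(1,2^j,G) = G$ has order $2^n$, and $2^j \leq |G|/2$ yields $j+1 \leq n$, so $2^{j+1} \mid 2^n$. Hence we focus on $1 \leq j < \log_2 \exp(G)$.

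Choose a central involution $z \in Z(G)$ of order $2$ and set $\bar G := G/\langle z\rangle$. Since $z$ is central and $j \geq 1$, for every $\bar x \in Sol(1,2^j,\bar G)$ the two lifts $x, xz$ satisfy $x^{2^j} = (xz)^{2^j} \in \langle z\rangle$, so both lifts lie in $Sol(1,2^j,G)$ or neither does; writing $Sol(1,2^j,\bar G) = A \sqcup B$ for these two cases gives $|Sol(1,2^j,G)| = 2|A|$. Assuming $\exp(\bar G) < |\bar G|/2$ so the inductive hypothesis applies to $\bar G$, one obtains $2^{j+1} \mid |A|+|B|$, whence the target $2^{j+1} \mid 2|A|$ reduces to $2^j \mid |B|$. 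A direct count gives $|B| = 2^{j-1} \cdot c_{j+1}(z)$, where $c_{j+1}(z)$ denotes the number of cyclic subgroups of $G$ of order $2^{j+1}$ whose unique involution is $z$; thus it suffices to prove that $c_{j+1}(z)$ is even.

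When $G$ possesses a second central involution $u \neq z$, this parity follows from the map $\tau : \langle x\rangle \mapsto \langle xu\rangle$ on cyclic subgroups of order $2^{j+1}$ containing $z$. Routine verification inside the abelian subgroup $\langle x, u\rangle$ shows that $\tau$ is well-defined (independent of the chosen generator), sends such subgroups to such subgroups (since $(xu)^{2^j} = x^{2^j} u^{2^j} = z$), and satisfies $\tau^2 = \mathrm{id}$; a fixed point would force $\langle x\rangle$ to contain the Klein four group $\langle z, u\rangle$, which is impossible for cyclic $\langle x\rangle$. Hence $\tau$ is fixed-point-free and $c_{j+1}(z)$ is even.

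The main obstacle is the residual case in which $\Omega_1(Z(G)) = \langle z\rangle$ (i.e., $Z(G)$ is cyclic), together with the sub-subcase where $\bar G$ is itself of maximal class so the induction hypothesis is not directly available for $\bar G$. Both scenarios must exploit the hypothesis $\exp(G) < |G|/2$ more carefully: it excludes $G$ itself from being of maximal class, and standard properties of the second centre $Z_2(G)$ (which strictly contains $Z(G)$ for non-abelian $G$, with $[Z_2(G), G] \leq \langle z\rangle$) allow one to produce the required pairing involution either inside $Z_2(G)$ or inside a normal Dedekind subgroup of $G$. Combined with the parity reduction above, this completes the inductive step.
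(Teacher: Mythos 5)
Your reduction is correct and genuinely different from the paper's argument as far as it goes: the lift--count identity $|Sol(1,2^j,G)|=2|A|$, the formula $|B|=2^{j-1}c_{j+1}(z)$, and the fixed-point-free pairing $\langle x\rangle\mapsto\langle xu\rangle$ by a \emph{second central} involution $u$ all check out. (The paper instead fixes a normal Klein four subgroup $A\lhd G$, whose existence it extracts from the classification of $2$-groups with no normal abelian subgroup of rank $2$ together with $exp(G)<|G|/2$, runs the induction on $j$ downward from $exp(G)$, and proves $2^{j+1}\mid |B_{2^{j+1}}(A[x^G])|$ by a case analysis on the groups $A\langle x\rangle$.) The problem is that the two situations you defer to the end --- $Z(G)$ cyclic, and $exp(G/\langle z\rangle)=|G/\langle z\rangle|/2$ so that the inductive hypothesis is unavailable for $\bar G$ --- are not residual edge cases but the entire difficulty of the lemma: they occur for all extraspecial $2$-groups, for central products such as $Q_8\ast C_{2^m}$, and for many groups of exponent $|G|/4$. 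Moreover the one concrete device you propose for them fails as stated. If $u\in Z_2(G)$ is a non-central involution with $[u,x]=z$, then $(xu)^2=x^2[u,x]=x^2z$, and for $j=1$ this equals $z\cdot z=1$, so $xu$ is an involution and $\langle xu\rangle$ is not a cyclic subgroup of order $4$ containing $z$: the pairing destroys the order exactly when it is needed. Well-definedness also breaks for non-central $u$, since for odd $k$ one has $(xu)^k=x^ku\,[u,x]^{\binom{k}{2}}$, which differs from $x^ku$ by $z$ when $k\equiv 3\pmod 4$, so the assignment on subgroups need not be independent of the chosen generator.

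Thus the key parity assertion --- that $c_{j+1}(z)$ is even when $z$ is the unique central involution --- is left unproved precisely where it is nontrivial (it amounts to a Berkovich/Kulakoff-type count of cyclic subgroups of a $2$-group that is not of maximal class, refined to those containing a prescribed involution), and the breakdown of the induction for $\bar G$ when $exp(\bar G)=|\bar G|/2$ is acknowledged but not handled. Until both are closed with actual arguments rather than an appeal to ``standard properties of the second centre,'' the proof is incomplete.
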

 \begin{proof}

    We proceed by induction on $|G|$.  If $|G|=8$, then $G\cong (C_2)^3$, and then every thing is trivial. So suppose that $|G|>8$.  Now, we proceed by  induction on $j$.
If $2^j=exp(G)$, then $2^{j+1}\mid |G|=|Sol(1,2^j,G)|$.
So suppose that the result is true for any divisor $2^r\leq |G|/2$ of $|G|$ such that $r>j$.
    There are comparatively few $2$-groups with no normal abelian subgroup of rank
$2$  such a $2$-group must be cyclic, dihedral, semidihedral, or generalized quaternion.
As,   $G$ does not contain any elements of order $|G|/2$, $G$ has an    elementary abelian normal subgroup $A$ of order $4$. 
    Let $x\in B_{2^{j+1}}(G)$.
First suppose that $G=A\langle x\rangle$.
Since $A\cong C_2\times C_2$, and $Aut(C_2\times C_2)=   PSL(2,2)\cong S_3$, we have 
$[x^2,A]=1$, so $A\langle x^2\rangle$ is an abelian group. Therefore $x^2\in Z(G)$. If $o(x)=2$, then $|G|=8$, , and thus $G\cong (C_2)^3$,  making the proof trivial. 
Therefore  $o(x)>2$.
If $o(x)<exp(G)$, then $G$ has an element of order $2\cdot o(x)=|G|/2$, which is a contradiction.
So $o(x)=exp(G)$.
Let $N=A\langle x^{o(x)/2}\rangle$.
If $2^{j}=o(x)$, then $2^{j+1}\mid |G|=|Sol(1,2^j,G)|$.
Thus   $2^j<o(x)$.
Clearly,   
$Sol(1,2^{j-1},\frac{G}{N})$ is a cyclic subgroup of $\frac{N\langle x\rangle}{N}$. Therefore
$Sol(1,2^{j-1},\frac{G}{N})=\langle aN\rangle$ where $a\in \langle x\rangle$.
If there exists $z\in B_{2^j}(G)\setminus N\langle x^2\rangle$ such that $\langle z\rangle \cap N=1$, then 
$\langle z\rangle \cap \langle x\rangle=1$. Hence, $\frac{G}{N}=\langle zN\rangle$.
Consequently, $o(zN)=o(xN)$.
If $j\leq 2$, then $o(x)\leq 8$, so $|G|\leq 32$. By a Gap computation, we have the result. So $j>2$.
Then $z^2\in N \langle x^2\rangle$. 
Since $\mho_1(N \langle x^2\rangle)\leq \langle x^2\rangle$, we have $\langle z\rangle \cap \langle x\rangle\neq 1$, which is a contradiction.
Hence, for any $z\in Sol(1,2^j,G)$, we have 
$o(zN)\mid 2^{j-1}$, so $zN\in Sol(1,2^{j-1},\frac{G}{N})$.
It follows that 
$2^{j+1}\mid |N\langle a\rangle|=|Sol(1,2^j,G)|.$
 
So $G\neq A\langle x\rangle$ for any $x\in B_{2^{j+1}}(G)$. 
Let $x\in B_{2^{j+1}}(G)$.
We claim that 
$2^{j+1}\mid |B_{2^{j+1}}(A[x^G])|$. Clearly, $\varphi(2^{j+1})=2^{j}\mid |B_{2^{j+1}}(A[x])|.$ There exist  $g_1,\ldots,g_k\in G$ such that $B_{2^{j+1}}(A[x^G])=A[x^{g_1}]\cup\ldots\cup A[x^{g_k}]$ and $A[x^{g_i}]\cap A[x^{g_j}]=\varnothing$ for all $1\leq i\neq j\leq k$.
If $2^{j+1}\mid |A[x]|$, then  
$$2^{j+1}\mid k|A[x]|=\sum_{i=1}^k|A[x^{g_i}]|=|B_{2^{j+1}}(A[x^G])|.$$

First suppose that 
$$exp(A\langle x\rangle)<\frac{|A\langle x\rangle|}{2}=\frac{|A|o(x)}{|A\cap \langle x\rangle| 2}=\frac{2\cdot o(x)}{|A\cap \langle x\rangle| }.$$
Then $A\cap \langle x\rangle=1$, and so 
  $exp(A\langle x^2\rangle)<|A\langle x^2\rangle|/2$.  
 By the induction hypothesis, $2^{j+2}\mid |Sol(1,2^{j+1},A\langle x\rangle)|$. As, $2^{j+1}\mid |A\langle x^2\rangle|$  by the Frobenius' theorem 
$2^{j+1}\mid |Sol(1,2^{j+1},A\langle x^2\rangle)|$. 

It follows that 
$$2^{j+1}\mid |Sol(1,2^{j+1},A\langle x\rangle)\setminus Sol(1,2^{j+1},A\langle x^2\rangle)|=|B_{2^{j+1}}(A[x])|.$$
  
Now, suppose that 
$exp(A\langle x\rangle)=|A\langle x\rangle|/2.$
If $[x,A]=1$, then $A\langle x\rangle$ is abelian, so
 $2^{j+1}\mid |A[x]|=|B_{2^{j+1}}(A[x])|.$
   
 So  $[x,A]\neq 1$.
Firstly,  suppose that $\langle x\rangle \lhd G$. Since $A\cong C_2\times C_2$ is normal in $A\langle x\rangle$, and $A\langle x\rangle$ is a $2$-group of maximal class,  either $A\langle x\rangle\cong M_s(2)=\langle a,b:a^{2^{s-1}}=b^2=1,a^b=a^{1+2^{s-2}}\rangle$ where $|A\langle x\rangle|=2^s>8$ or $A\langle x\rangle\cong D_8$. 
If $A\langle x\rangle\cong M_s(2)$, then  $\exp(\Omega_j(A\langle x\rangle))=2^j$, so
$2^{j+1}\mid |\Omega_j(A\langle x\rangle)|$, therefore 
$$2^{j+1}\mid |\Omega_{j+1}(A\langle x\rangle)\setminus \Omega_j(A\langle x\rangle)|=|B_{2^{j+1}}(A\langle x\rangle)|.$$

So $A\langle x\rangle\cong D_8$. Let $E=A\langle x\rangle.$
Since   $\langle x\rangle \lhd G$ and $x\not\in Z(E)$, we deduce that  $o(x)=4$.  Then $\langle x\rangle \cap E=\langle x^2\rangle$. 
Since $\frac{E}{A}\leq Z(\frac{G}{A})$ and 
$\frac{E}{\langle x\rangle}\leq Z(\frac{G}{\langle x\rangle})$, we deduce that $[G, E]\leq A\cap \langle x\rangle=\langle x^2\rangle=E'$.
By     Lemma 4.3 of \cite{Berke}, 
$G=E\ast C_G(E)$.
Let $v\in C_G(E)$ such that $v^2\in [x]$.
Then $(vx)^2=v^2x^2=x^2x^2=1$.
So $vx\in Sol(1,2,G)$.
Therefore  
\begin{eqnarray*}
|Sol(1,2,G)|&=&|Sol(1,2,E)||Sol(1,2,C_G(E))|/2+|B_4(E)||Sol([x^2],2,C_G(E)|/2\\&=&3|Sol(1,2,C_G(E))|+|Sol([x^2],2,C_G(E)|\\&=&
2|Sol(1,2,C_G(E))|+(|Sol(1,2,C_G(E))|+|Sol([x^2],2,C_G(E)|).
\end{eqnarray*}
As, $$|Sol(1,2,C_G(E))|+|Sol([x^2],2,C_G(E)|=2|Sol(1,2,\frac{C_G(E)}{N})|,$$ we conclude that 
\begin{eqnarray*}
|Sol(1,2,G)|&=&
2|Sol(1,2,C_G(E))|+2|Sol(1,2,\frac{C_G(E)}{N})|.   
\end{eqnarray*}
 Clearly, $4=2^{j+1}\mid |Sol(1,2,G)|.$
  
So $\langle x\rangle$ is not normal in $G$. Then 
 $2\mid [G:N_G(\langle x\rangle)]=k$. Therefore  
$2^{j+1}\mid |B_{2^{j+1}}(A[x^G])|$, as claimed.

We have 
$Sol(1,2^{j+1},G)\setminus Sol(1,2^j,G)=B_{2^{j+1}}(G)$. 
Since $B_{2^{j+1}}(G)$ is a union of disjoint subsets $B_{2^{j+1}}(A[u_1^G]),\ldots,B_{2^{j+1}}(A[u_r^G])$ for some  $u_1,\ldots,u_r \in B_{2^{j+1}}(G)$, we deduce that 
$2^{j+1}\mid |B_{2^{j+1}}(G)|.$
By the induction hypothesis, 
$2^{j+2}\mid |Sol(1,2^{j+1},G)|$, thus  
$$2^{j+1}\mid |Sol(1,2^j,G)|=|Sol(1,2^{j+1},G)|-|B_{2^{j+1}}(G)|.$$

 \end{proof}
 \begin{thm}\label{2va}
     Let $G$ be a finite group, and let $P$ be a Sylow $2$-subgroup of $G$ with exponent $2^t$. Consider $1\leq j\leq t$.
     If $P$ does not contain any  elements of order $|P|/2$, then $2^{j+1}d\mid |Sol(1,2^jd,G)|$ for any divisor $d$ of $n_{2'}$.
 \end{thm}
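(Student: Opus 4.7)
The plan is to follow the two-layer induction skeleton used in the proofs of Theorems~\ref{divv22} and \ref{Frob3}: an outer induction on $d$ that reduces everything to the case $d=1$, and an inner descending induction on $j$ within that case. The novelty is that, in the absence of an $A$-regular subgroup, the engine of the inner induction is Lemma~\ref{lemmm} rather than the regular-$p$-group analysis used for odd primes.

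For the outer induction I would copy the reduction at the end of the proof of Theorem~\ref{divv22} almost verbatim: pick a prime divisor $q$ of $d$ and split
\begin{equation*}
|Sol(1,2^jd,G)|=\sum_{i=0}^{t}|Sol(B_{q^i}(G),2^jd_{q'},G)|,
\end{equation*}
with $q^t=gcd(d_q,exp(G))$. Proposition~2.3 of \cite{mohsen2} already delivers the factor $d$. For each $i\geq 1$ and each class representative $y\in B_{q^i}(G)$ the same Proposition gives $|Sol([y],d_{q'}2^j,G)|=|y^G|\,|Sol([y],d_y 2^{j_y},C_G(y))|$ with $d_y=gcd(d_{q'},|C_G(y)|)$, and applying the theorem inductively to $C_G(y)/\langle y\rangle$ (whose $2$-Sylow agrees with that of $C_G(y)$ because $y$ has odd order) should supply the factor $2^{j_y+1}$, the remaining power of $2$ being absorbed into the $2$-part of $|y^G|$ exactly as in Theorem~\ref{divv22}.

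For the base case $d=1$ I would run a descending induction on $j$ starting from $j=t$. At $j=t$ the set $Sol(1,2^t,G)$ is exactly the set of $2$-elements of $G$, whose cardinality is a multiple of $|P|$ by Frobenius; since the hypothesis forces $|P|\geq 4\cdot exp(P)=2^{t+2}$, divisibility by $2^{t+1}$ is immediate. For the descending step use $|Sol(1,2^j,G)|=|Sol(1,2^{j+1},G)|-|B_{2^{j+1}}(G)|$: the inductive hypothesis yields $2^{j+2}\mid |Sol(1,2^{j+1},G)|$, so it remains to show $2^{j+1}\mid|B_{2^{j+1}}(G)|$. For this I would decompose $B_{2^{j+1}}(G)$ into $G$-orbits of cyclic subgroups and push the count onto the Sylow $2$-subgroups through a Zemlin-style double-coset decomposition, feeding Lemma~\ref{lemmm} inside $P$ to raise the Zemlin bound from $2^j$ to $2^{j+1}$.

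The main obstacle I anticipate is that the hypothesis ``$P$ has no element of order $|P|/2$'' does not in general descend to Sylow $2$-subgroups of centralizers $C_G(y)$; smaller or more abelian Sylow factors may easily acquire elements of order half their order. Handling this in the outer induction will likely require splitting cases according to whether the reduced Sylow $2$-subgroup still satisfies the hypothesis, falling back on Frobenius or on Theorem~\ref{Frob3} in the exceptional configurations while verifying that the apparent shortfall is absorbed by the $2$-part of the index $[P:C_P(y)]$. A secondary technical hurdle is the ``$+1$'' refinement of Zemlin's bound required in the descending step: since $m(P,j,2)$ is capped at $j$ by definition, Lemma~\ref{Zalm} cannot simply be invoked as a black box but must be revisited with Lemma~\ref{lemmm} inserted at the crucial point of its proof.
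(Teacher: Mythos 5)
Your outer reduction on $d$ matches the paper's: the same $q$-adic splitting of $Sol(1,2^jd,G)$, the same centralizer formula from Proposition~2.3 of \cite{mohsen2}, and the obstacle you flag --- that the hypothesis need not descend to the Sylow $2$-subgroups of $C_G(y)$ --- is resolved in the paper exactly as you guess, by splitting into cases according to whether the reduced Sylow $2$-subgroup still satisfies the hypothesis and absorbing the shortfall into the $2$-part of $|y^G|$ in the bad case. The base case of the inner induction on $j$ via Frobenius is also as you describe.

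The genuine gap is the crucial step for $d=1$: proving $2^{j+1}\mid |B_{2^{j+1}}(G)|$. You propose to get this from a Zemlin-style double-coset count with Lemma~\ref{lemmm} inserted to upgrade the bound from $2^{m(P,j,2)}$ to $2^{j+1}$, and you yourself observe that Lemma~\ref{Zalm} cannot be invoked as a black box since $m(P,j,2)\le j$ by definition. That is not a ``secondary technical hurdle'' to be deferred --- it is the entire content of the theorem --- and there is no reason to expect the hypothesis $exp(H)<|H|/2$ to survive passage to the intersections and stabilizers arising inside a double-coset decomposition, so the proposed upgrade has no proof. The paper avoids Zemlin entirely at this point: after disposing of abelian $P$ via Theorem~\ref{divv2} and noting that $P$ cannot be of maximal class, it invokes Janko's theorem \cite{Janko} to produce either an elementary abelian subgroup of rank $3$ or a normal abelian subgroup $N\le P$ of rank $2$ and exponent $4$, and then partitions $B_{2^{j+1}}(G)$ into the sets $V_M=\{b:\langle b\rangle M\le G,\ N_N(\langle b\rangle M)=M\}$ indexed by the subgroups $M\le N$, exactly as in Theorem~\ref{divv22}. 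The extra factor of $2$ then comes either from the even index $[N:M]$ when $M<N$, or from the induction hypothesis on $|G|$ applied to the proper subgroups $M\langle b\rangle$, with the exceptional shapes $M_s(2)$ and $D_8$ of $N\langle b\rangle$ treated separately. Your sketch is missing this structural input --- a normal abelian subgroup of $P$ around which to organize the count --- and without it the descending step does not go through.
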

 \begin{proof}
We proceed by induction on $|G|$.
From Lemma \ref{lemmm}, we may assume that $|G|$ is not  a $2$-group. 
  Now, we proceed by induction on $2^j$.
If $2^j=|P|/2$, then by the Frobenius' theorem 
$$|P|\mid  |Sol(1,|P|,G)|=|Sol(1,|P|/2,G)|.$$
So suppose that the theorem is true for all divisors $2^r>2^j$ of $|P|/2$.
Now, we proceed by induction on $d$.
First suppose  $d=1$.
If $P$ is abelian, then by Theorem \ref{divv2}, $2^{j+1}\mid |Sol(1,2^{j+1},G)|$.
  Therefore $P$ is not abelian  and    $P$ is not of maximal class, as $exp(|P|)<|P|/2$.
  Then either $P$ has an elementary abelian subgroup  of rank three or by Theorem 1.1 of \cite{Janko}, $P$ has an abelian normal subgroup   of type $C_4\times C_2$ or $C_4\times C_4$. 
  We consider the following two cases:

  {\bf Case 1.}  The subgroup $P$ does not contain  any   elementary abelian subgroups  of rank three.
  Consequently, $P$ has a normal abelian subgroup $N$ of rank two and exponent four.

Let $U=B_{2^{j+1}}(G)$, and let $M$ be a   subgroup of $N$ of order $2^{r_M}$ and of  exponent $2^{s_M}$, and let $$V_M=\{b\in U:\langle b\rangle M\leq G, \ N_N(\langle b\rangle M)=M\}.$$
Let $b\in V_M$. 
We claim that $$2^{j+1}\mid [N:M]|B_{2^{j+1}}(M\langle b\rangle)|=|B_{2^{j+1}}((M\langle b\rangle))^N|.$$
Since $B_{2^{j+1}}(M\langle b\rangle)$ is a union of elements of order $2^{j+1}$,
 we have  $2^{j}\mid |B_{2^{j+1}}(M\langle b\rangle)|.$
Clearly,  $G\neq M\langle b\rangle$, as $M\langle b\rangle$ is a $2$-group.
First suppose that 
$exp(M\langle b\rangle)=|M\langle b\rangle|/2$. 
If $M\neq N$, then 
$2\mid [N:M]$, so
$2^{j+1}\mid [[N:M]|B_{2^{j+1}}(M\langle b\rangle)|.$
So  $N=M$.
If $N\langle b\rangle$ is abelian, then 
$2^{j+1}\mid |B_{2^{j+1}}(N\langle b\rangle)|$, as $N\langle b\rangle$ is not cyclic.
So $N\langle b\rangle$ is not abelian. Therefore 
$N\langle b\rangle$ is a $2$-group of maximal class. 
By Theorem 4.3(d) of   \cite{Gor},
  $N\langle b\rangle\cong M_s(2)=\langle h,z: h^{2^{s-1}}=z^2=1, h^z=h^{1+2^{s-1}}\rangle $ for $s\geq 4$.
Since  
$\Omega_1(N\langle b\rangle)\cong C_2\times C_2$ is a $A$-regular subgroup of $N\langle b\rangle$. From Theorem \ref{divv2}, 
$2^{j+1}\mid |Sol(1,2^j,N\langle b\rangle)|$, so $$2^{j+1}\mid  |Sol(1,2^{j+1},N\langle b\rangle)\setminus Sol(1,2^j,N\langle b\rangle)|=| B_{2^{j+1}}(N\langle b\rangle)|.$$

Hence, 
$2^{j+1}\mid [N:M]| B_{2^{j+1}}(M\langle b\rangle)|$.
So $exp(M\langle b\rangle)<|M\langle b\rangle|/2$.
By the induction hypothesis, $2^{j+1}\mid |Sol(1,2^j,M\langle b\rangle)|$, so $$2^{j+1}\mid  |Sol(1,2^{j+1},M\langle b\rangle)\setminus Sol(1,2^j,M\langle b\rangle)|=| B_{2^{j+1}}(M\langle b\rangle)|.$$

Hence, 
$2^{j+1}\mid [N:M]| B_{2^{j+1}}(M\langle b\rangle)|$, as claimed.

For any $b\in V_m$, since  $2^{j+1}\mid |B_{2^{j+1}}((M\langle b\rangle))^N|$ and $2^{j+1}\mid |B_{2^{j+1}}((M\langle b^2\rangle))^N|$, we deduce that 
$$2^{j+1}\mid |B_{2^{j+1}}((M\langle b\rangle))^N\setminus |B_{2^{j+1}}((M\langle b^2\rangle))^N|=|B_{2^{j+1}}((M[b]))^N|.$$

Clearly, $V_M$ is a union of    disjoint subsets $$B_{2^{j+1}}(M[ u_{1}^N]),\ldots,B_{2^{j+1}}(M[ u_{q}^N])$$
where $u_1,\ldots,u_q\in V_M$.
Hence $$2^{j+1}\mid  \sum_{i=1}^q|B_{2^{j+1}}(M[u_{i}^N])|=|V_M|.$$

Let $S$ denote the set of all subgroups of $N$. Consider $M,K \in S$ such that $M \neq K$, and let $a \in V_M \cap V_K$. Then, both $\langle a \rangle K$ and $\langle a \rangle M$ are subgroups of $G$. Since $N$ is an abelian group,   $H:=\langle a \rangle KM$ is a subgroup of $G$. Observing that $\langle a \rangle K$ is normal in $H$, we deduce $KM \leq N_N(\langle a \rangle K)=K$, which contradicts the assumption. Therefore, $V_M \cap V_K = \varnothing$.

Considering $B_{2^{j+1}}(G)=\bigcup_{M \in S}V_M$, and noting that $2^{j+1}$ divides $|V_M|$ for any $M \in S$, we infer that $2^{j+1}$ divides $|B_{2^{j+1}}(G)|$, as claimed.
It follows that 
$2^{j+1}\mid |Sol(1,2^j,G)|$.

{\bf Case 2.}  The group $G$   has  an    elementary abelian subgroup $N$  of rank three.
Let $U=B_{2^{j+1}}(G)$, and let $M$ be a   subgroup of $N$ of order $2^{r_M}$, and let $$V_M=\{b\in U:\langle b\rangle M\leq G, \ N_N(\langle b\rangle M)=M\}.$$
We claim that $$2^{j+1}\mid [N:M]|B_{2^{j+1}}(M\langle b\rangle)|=|B_{2^{j+1}}((M\langle b\rangle))^N|.$$

First suppose that 
$exp(M\langle b\rangle)=|M\langle b\rangle|/2$.
Then $[M:\varPhi(M)]\leq 4$, and hence $N\neq M$.
It follows from
$2\mid [N:M]$ that
$2^{j+1}\mid [[N:M]|B_{2^{j+1}}(M\langle b\rangle))|.$ 

So $exp(M\langle b\rangle)<|M\langle b\rangle|/2$.
By the induction hypothesis, $2^{j+1}\mid |Sol(1,2^j,M\langle b\rangle)|$, so $$2^{j+1}\mid  |Sol(1,2^{j+1},M\langle b\rangle)\setminus Sol(1,2^j,M\langle b\rangle)|=| B_{2^{j+1}}(M\langle b\rangle)|,$$
and  hence $$2^{j+1}\mid [N:M]|B_{2^{j+1}}(M\langle b\rangle)|=|B_{2^{j+1}}((M\langle b\rangle))^N|,$$
  as claimed.
 The rest of the proof of this case is similar to the first case.

So suppose that $d>1$.
Let $q$ be a prime divisor of $d$, and let $q^r=gcd(exp(G),d_q)$.
We have $Sol(1,2^jd,G)=\bigcup_{i=0}^rSol(B_{q^i}(G),2^jd_{q'},G)$. 
By then induction hypothesis, $2^{j+1}\mid |Sol(1,2^jd_{q'},G)|.$ Let $1\leq i\leq r$.
    Consider $y\in Sol(B_{q^i}(G),2^jd_{q'},G)$, and let $s=d_{(qo(y))'}$.
     Then $$|Sol([y],s,G)|=|y^G||Sol([y],2^js,C_G(y))|.$$

Let $R$ be a Sylow $2$-subgroup of the centralizer $C_G(y)$ with order $2^m$, and let $gcd(2^j,|C_G(y)|)=2^{j_y}$.
If the Sylow $2$-subgroups of $C_G(y)$ has an element of order $2^{j_y-1}$, then 
$2^{j-j_y+1}\mid |y^G|$, as Sylow $2$-subgroups of $G$ does not contain any elements of order $|P|/2$.
Hence 
$2^{j+1}\mid |Sol([y],s,G)|$.

If the Sylow $2$-subgroups of $C_G(y)$ does not contain any elements of order $2^{j_y-1}$, then by induction hypothesis,  
$2^{j_y+1}\mid |Sol(1,2^{j_y}s,\frac{C_G(y)}{\langle y\rangle})|=|Sol([y],2^{j_y}s,C_G(y))|$. Since $2^{j-j_y}\mid |y^G|$, we conclude that
$2^{j+1}\mid |Sol([y],s,G)|$.
According to Proposition 2.3 of  \cite{mohsen2}, $d$ divides $|Sol(1,d2^j,G)|$, so $2^{j+1}d$ divides $|Sol(1,d2^j,G)|$.

 \end{proof}
 \begin{thm}\label{22va}
     Let $G$ be a finite group. If $P\in Syl_2(G)$ does not contain any elements of order $|P|/2$, then for any divisor $d$ of $n_{2'}$,  we have $|Sol(1,2^jd,G)|\geq 2^{j+2}d$.

 \end{thm}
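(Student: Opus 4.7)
The strategy is to combine the divisibility from Theorem \ref{2va} with a strict-inequality argument. Theorem \ref{2va} already gives $2^{j+1}d \mid |Sol(1,2^jd,G)|$, so $|Sol(1,2^jd,G)|$ is a positive multiple of $2^{j+1}d$; it therefore suffices to prove the strict inequality $|Sol(1,2^jd,G)| > 2^{j+1}d$, which bumps the count up to the next multiple of $2^{j+1}d$, namely $2^{j+2}d$.

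To exclude the equality $|Sol(1,2^jd,G)| = 2^{j+1}d$, I would exploit the hypothesis structurally. Since $P$ has no element of order $|P|/2$, neither $\exp(P) = |P|/2$ nor $\exp(P) = |P|$ is possible (the latter would make $P$ cyclic and still produce such an element), so $\exp(P) \leq |P|/4$, and consequently $|P| \geq 4\exp(P) \geq 2^{j+2}$ whenever $2^j \leq \exp(P)$. In the base case $d=1$ with $2^j \geq \exp(P)$, the containment $P \subseteq Sol(1,2^j,G)$ together with $|P| \geq 2^{j+2}$ delivers the bound immediately, and combining with the divisibility by $2^{j+1}$ from Theorem \ref{2va} places the cardinality at the next multiple of $2^{j+1}$, which is $2^{j+2}$. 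For the remaining cases with $d=1$ I would induct on $|G|$ along the lines of the proof of Theorem \ref{2va}, reusing its two-case split (either a normal abelian subgroup of rank two and exponent four, or an elementary abelian subgroup of rank three) and sharpening the counting of the sets $V_M$ to extract the extra factor of two supplied by $|P|/\exp(P) \geq 4$.

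For odd $d > 1$ I would apply the same odd-prime reduction used throughout the section: decompose $Sol(1,2^jd,G)$ over conjugacy classes of odd-order elements $y$ and reduce each contribution to $Sol([y],2^jd_y,C_G(y))$ via Proposition~2.3 of \cite{mohsen2}, then finish by induction on $|G|$. The principal obstacle I expect is preserving the extra factor of two through this centralizer reduction, because a centralizer $C_G(y)$ need not inherit the ``no element of half-order'' hypothesis on its own Sylow $2$-subgroup. I would address this by pairing each centralizer bound with the $2$-part of the class size, $|y^G|_2 = |P|/|C_G(y)|_2$, which supplies the missing power of two precisely when the centralizer's Sylow $2$-subgroup does have an element of half-order, so that the two cases together still yield $2^{j+2}d \mid |Sol(1,2^jd,G)|$.
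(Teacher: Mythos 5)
Your closing step --- $|Sol(1,2^jd,G)|$ is a multiple of $2^{j+1}d$ by Theorem \ref{2va}, so any strict inequality $|Sol(1,2^jd,G)|>2^{j+1}d$ forces $|Sol(1,2^jd,G)|\geq 2^{j+2}d$ --- is exactly the right observation, and it is how the paper finishes. The gap is in how you propose to obtain the strict inequality. You only actually settle the single case $d=1$, $2^j=\exp(P)$ (via $|P|\geq 4\exp(P)$); for everything else you defer to ``reusing the two-case split of Theorem \ref{2va} and sharpening the counting of the sets $V_M$ to extract an extra factor of two,'' and for $d>1$ to a centralizer reduction that you claim yields $2^{j+2}d\mid |Sol(1,2^jd,G)|$. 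Neither of these is an argument. The proposed sharpening would amount to proving divisibility by $2^{j+2}d$ rather than $2^{j+1}d$, a strictly stronger statement than Theorem \ref{2va} itself (and stronger than the theorem you are proving, which asserts only an inequality); nothing in the $V_M$ bookkeeping produces that extra factor, and you give no reason it should exist. The same objection applies to the claimed $2^{j+2}d$ divisibility at the end of your odd-$d$ reduction, whose difficulty you yourself flag without resolving. As written, the proof of the strict inequality --- which is the entire content of the theorem beyond Theorem \ref{2va} --- is missing in all but one corner case.

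What you are missing is that the strict inequality comes essentially for free from monotonicity in $j$, with no new structural analysis: take $j$ minimal with $|Sol(1,2^jd,G)|<2^{j+2}d$, so that $|Sol(1,2^{j-1}d,G)|\geq 2^{j+1}d$. Since $2^j$ divides $\exp(P)$, the set $B_{2^j}(G)$ is nonempty and disjoint from $Sol(1,2^{j-1}d,G)$, whence
$$|Sol(1,2^{j}d,G)|\geq |Sol(1,2^{j-1}d,G)|+\varphi(2^j)\geq 2^{j+1}d+2^{j-1}>2^{j+1}d,$$
and Theorem \ref{2va} then bumps this up to $2^{j+2}d$, contradicting the choice of $j$. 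No fresh induction on $|G|$, no re-examination of the sets $V_M$, and no centralizer reduction are needed; I would rework your argument around this one-line count.
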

 \begin{proof}
 
Let $exp(P)=2^t$, and let $1\leq j\leq t$ be the smallest positive integer such that $ |Sol(1,2^jd,G)|< 2^{j+2}d$. Then we find that $ |Sol(1,2^{j-1}d,G)|\geq 2^{j+1}d$.
Since $2^j$ divides $exp(P)$, we have $B_{2^j}(G)\neq \varnothing$, so    \begin{eqnarray*}
    |Sol(1,2^{j}d,G)|&\geq& |Sol(1,2^{j-1}d,G)|+\varphi(2^j)\\&\geq& 2^{j+1}d+2^{j-1}\\&>& 2^{j+1}d.
\end{eqnarray*}  According to Theorem $\ref{2va}$, there exists a positive integer $k$ such that $ |Sol(1,2^jd,G)|= 2^{j+1}dk > 2^{j+1}d$, implying $k>1$. Therefore, $ |Sol(1,2^jd,G)|\geq 2^{j+2}d$, which contradicts our initial assumption.
     
 \end{proof}

To establish Lemma \ref{dis},  we require the following lemma.
 \begin{lem}(Fenchel  \cite{33}, and Murai \cite{66})\label{ciic}
     Let $G$ be a finite group and $e=|Sol(1,e,G)|$. Assume that $e$ divides $|G|$. If $p$ is a prime divisor of e and $\frac{|G|}{e}$, then Sylow $p$-subgroups of $G$ are cyclic, generalized quaternion, dihedral, or semi-dihedral.
 \end{lem}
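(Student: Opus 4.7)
The plan is to argue by contradiction, invoking the refined Frobenius-type theorems proved earlier in this section (Theorems \ref{divv22}, \ref{2va}, \ref{22va}) to show that $|Sol(1,e,G)|$ is strictly larger than $e$ whenever the Sylow $p$-subgroup $P$ falls outside the four classes listed. Write $e=p^am$ with $\gcd(p,m)=1$; the hypotheses $p\mid e$ and $p\mid|G|/e$ give $a\geq 1$ and $|P|=p^{a+b}$ with $b\geq 1$. The degenerate range $\exp(P)<p^a$ (which forces $|P|\leq e$ and hence $p^b\leq m$) can be handled by a direct elementary count of $p$-elements or absorbed into an induction on $|G|$, so I shall assume $p^a\leq\exp(P)$ for the remainder.

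If $p$ is odd and $P$ is non-cyclic, Lemma \ref{noncyc} supplies an elementary abelian subgroup $N\leq P$ of order $p^2$. Since $N$ is abelian (hence Dedekind, cf.\ Remark \ref{rem1}), Theorem \ref{divv22} applied with $r=2$, $s=1$, $j=a$, $d=m$ forces
\[
mp^{a+1}\,\big|\,|Sol(1,e,G)|=mp^a,
\]
an impossibility. Hence $P$ is cyclic in the odd-$p$ case.

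For $p=2$, I would first apply Theorem \ref{2va}: if $P$ contains no element of order $|P|/2$, then $2e$ divides $|Sol(1,e,G)|$, again contradicting $|Sol(1,e,G)|=e$. So $P$ admits a cyclic subgroup of index $2$, and the classical classification of such $2$-groups places $P$ in one of $\{C_{2^{a+b}},\,D_{2^{a+b}},\,Q_{2^{a+b}},\,SD_{2^{a+b}},\,M_{2^{a+b}},\,C_{2^{a+b-1}}\times C_2\}$. To rule out the last two families, I would use the characteristic Klein four $N=\Omega_1(P)\cong C_2\times C_2$ present in both $M_{2^{a+b}}$ and $C_{2^{a+b-1}}\times C_2$; in each of these groups the conjugation action of $P$ on $N$ is essentially trivial, so the $A$-regular hypothesis for $N$ is satisfied, and reapplying Theorem \ref{divv22} yields $2e\mid e$, the final contradiction, leaving only the four admissible classes.

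The main obstacle is verifying the $A$-regular condition for $N$ in each subcase and, more subtly, understanding why the parallel attempt fails in $D_{2^{a+b}}$ and $SD_{2^{a+b}}$: there the product of two non-central involutions is a rotation of large order, so $o(hu)$ vastly exceeds $\mathrm{lcm}(o(h),o(u))$ on $\langle y\rangle N$, breaking $A$-regularity for every Klein four and allowing these two families to survive in the conclusion. The only auxiliary classical input required is the elementary classification of $2$-groups with a cyclic subgroup of index two (see, e.g., \cite{Berke}).
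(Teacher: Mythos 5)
The first thing to note is that the paper does not prove this lemma at all: it is imported verbatim from Fenchel \cite{33} and Murai \cite{66} and used as a black box (its only application is in Lemma \ref{dis}). So there is no internal proof to compare against, and your attempt has to stand on its own. Its skeleton --- derive $pe\mid |Sol(1,e,G)|=e$ from the refined Frobenius-type theorems of Section \ref{sec2} whenever $P$ avoids the four listed classes --- is in the spirit of how the paper uses those theorems elsewhere, and the $p=2$ step via Theorem \ref{2va} (no element of order $|P|/2$ forces $2^{a+1}m\mid 2^am$) is fine as far as it goes. But the argument as written has a genuine gap at exactly the point you flag as ``the main obstacle'': the $A$-regularity hypothesis of Theorem \ref{divv22}. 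That hypothesis is not about the conjugation action of $P$ on $N$ being ``essentially trivial''; by definition it requires $LCM_p(\langle y\rangle M)=\langle y\rangle M$ for \emph{every} $p$-element $y$ of $G$ with $o(y)>p^s$ and every $M\le N$ for which $M\langle y\rangle$ is a subgroup, i.e.\ it is an order-arithmetic condition ($o(hu)\mid \mathrm{lcm}(o(h),o(u))$) inside every $p$-subgroup of $G$ of that form. For odd $p$, Lemma \ref{noncyc} hands you \emph{some} elementary abelian $N$ of order $p^2$, but in an irregular $p$-group (already for $P\supseteq C_p\wr C_p$) there exist $p$-elements $x_1,x_2$ with $o(x_1x_2)>\mathrm{lcm}(o(x_1),o(x_2))$, so an arbitrary such $N$ need not be $A$-regular and Theorem \ref{divv22} simply does not apply to it. Asserting the hypothesis without verifying it (or without replacing Theorem \ref{divv22} by Theorem \ref{Frob3}, which has no such hypothesis but whose exponent $m(G,k,p)$ you would then have to bound below) leaves the core of the proof open.

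Two smaller but real problems. First, the ``degenerate range'' $\exp(P)<p^a$ is not actually disposed of: there $Sol(1,e,G)=Sol(1,p^tm,G)$ with $t<a$, Frobenius only gives $p^tm\mid p^am$, and neither a ``direct count of $p$-elements'' nor the stated induction visibly produces a contradiction, so this case needs an argument rather than a remark. Second, in the $p=2$ endgame the exclusion of $M_{2^{a+b}}$ and $C_{2^{a+b-1}}\times C_2$ can indeed be made to work (every $2$-subgroup $M\langle y\rangle$ is contained in a conjugate of $P$, and all subgroups of these two groups are abelian or modular, where the $\mathrm{lcm}$ divisibility holds), but that is the verification you would need to write out; the reason you give is not the right one. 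Your closing observation about why dihedral and semidihedral groups survive (non-central involutions multiplying to high-order rotations break $A$-regularity) is a correct and useful sanity check, but it is not a substitute for the missing verifications. As it stands the proposal is a plausible plan, not a proof; since the paper itself only cites the result, the honest options are either to keep the citation or to carry out the $A$-regularity analysis in full.
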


First, we prove Question \ref{iss}, in  the case $p$ is the smallest prime divisor of $n$. 

\begin{lem}\label{dis}
Let $G$ be a finite group of order $n$, and let $P\in Syl_p(G)$ such that  $P$ is neither cyclic nor generalized quaternion where $p$ is the smallest prime divisor of $n$.  Let  $d$ be a positive divisor of $n$ which is co-prime to   $p$ and $p\leq p^m\leq exp(P)=p^t$.  If $p>2$, then  $|Sol(1,dp^{m},G)|\geq dp^{m+1}$ and if $p=2$, then $|Sol(1,n_{2'}2^{m},G)|\geq n_{2'}2^{m+1}$.

\end{lem}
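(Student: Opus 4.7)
The plan is to split the argument on the parity of $p$, and for $p=2$ further on whether $P$ has an element of order $|P|/2$. For $p>2$ I would first invoke Lemma \ref{noncyc} to extract from the non-cyclic $P$ an elementary abelian subgroup $N\leq P$ of order $p^2$; this has $r=2$ and $s=1$. I would then check that $N$ is $A$-regular in $G$---since $|N|=p^2$, $exp(N)=p$, and $p$ is odd, every $p$-subgroup of the form $\langle y\rangle M$ arising in the definition of $A$-regularity can be expected to satisfy Hall's regularity condition, so $LCM_p(\langle y\rangle M)=\langle y\rangle M$---and then apply Theorem \ref{divv22} with $(r,s,j)=(2,1,m)$. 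This yields $dp^{m+1}\mid|Sol(1,dp^m,G)|$, and because the identity lies in $Sol(1,dp^m,G)$ the cardinality must be at least $dp^{m+1}$, as required.

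For $p=2$ with $d=n_{2'}$, I would first handle the case when $P$ contains no element of order $|P|/2$: Theorem \ref{22va} then yields $|Sol(1,n_{2'}2^m,G)|\geq n_{2'}2^{m+2}$ directly, which is stronger than needed. If instead $P$ contains such an element, then $|P|=2\cdot exp(P)=2^{t+1}$, and the classification of $2$-groups with a cyclic subgroup of index $2$, combined with the hypothesis that $P$ is neither cyclic nor generalized quaternion, forces $P$ into one of the types $C_{2^t}\times C_2$, $D_{2^{t+1}}$, $SD_{2^{t+1}}$, or $M_{2^{t+1}}$. My plan is to argue by contradiction: assuming $|Sol(1,n_{2'}2^m,G)|<n_{2'}2^{m+1}$, Frobenius's theorem forces $|Sol(1,n_{2'}2^m,G)|=n_{2'}2^m$. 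Applying Lemma \ref{ciic} with $e=n_{2'}2^m$ and $p=2$---valid since $m\geq 1$ gives $2\mid e$ and $m\leq t$ gives $2\mid n/e=2^{t+1-m}$---concludes that $P$ is cyclic, generalized quaternion, dihedral, or semi-dihedral. The abelian case $C_{2^t}\times C_2$ and the modular case $M_{2^{t+1}}$ are thus excluded at once; for the remaining $D_{2^{t+1}}$ and $SD_{2^{t+1}}$ I would count directly inside $P$, noting that the $2^t$ reflections (or their analogues in the semi-dihedral case) together with the $2^m$ rotations of order dividing $2^m$ yield at least $2^{m+1}$ solutions of $x^{2^m}=1$ in $P$. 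A multiplicative lifting by the odd divisor $n_{2'}$ via Proposition~2.3 of \cite{mohsen2} then produces $|Sol(1,n_{2'}2^m,G)|\geq n_{2'}2^{m+1}$, contradicting the assumption.

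The principal obstacle is verifying $A$-regularity of $N$ in the odd-prime case, as the condition ranges over all $p$-elements of $G$ (not merely those of $P$), and the relevant $p$-groups $\langle y\rangle M$ need not have small order; one must argue Hall regularity via the low rank of $M$ and the class structure of $\langle y\rangle M$ specific to $p$ odd. A secondary obstacle is the lifting from $P$-counts to $G$-counts in the dihedral and semi-dihedral subcases, which requires careful exploitation of the $2$-part/$2'$-part multiplicativity of the $Sol$ function to convert an inequality inside $P$ into the desired inequality inside $G$.
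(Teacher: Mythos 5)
Your odd-$p$ argument and the first half of your $p=2$ argument track the paper: for $p>2$ the paper likewise takes the elementary abelian subgroup $N\le P$ of order $p^2$ furnished by Lemma \ref{noncyc} and applies Theorem \ref{divv22} with $(r,s,j)=(2,1,m)$ (it does not verify $A$-regularity any more explicitly than you propose to), and for $p=2$ it likewise reduces, via Frobenius' theorem and Lemma \ref{ciic}, to the case $|Sol(1,n_{2'}2^{m},G)|=n_{2'}2^{m}$ with $P$ dihedral or semidihedral.

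The gap is in your final step for $D_{2^{t+1}}$ and $SD_{2^{t+1}}$. Counting at least $2^{m+1}$ solutions of $x^{2^m}=1$ inside $P$ and then ``lifting multiplicatively by $n_{2'}$'' does not yield $|Sol(1,n_{2'}2^{m},G)|\ge n_{2'}2^{m+1}$. What Proposition 2.3 of \cite{mohsen2} actually provides is a contribution of $|Sol([y],n_{2'},G)|\ge\varphi(2^{i})\,n_{2'}$ for each $G$-conjugacy class of cyclic subgroups $\langle y\rangle$ of order $2^{i}$, so the global count is governed by the number of such $G$-classes, not by the number of solutions inside $P$. In a dihedral or semidihedral Sylow subgroup the $2^{t}$ reflections may all fuse into a single $G$-class, and if for each $i\le m$ there is only one $G$-class of cyclic subgroups of order $2^{i}$, the resulting lower bound is exactly $n_{2'}\bigl(1+\sum_{i=1}^{m}2^{i-1}\bigr)=n_{2'}2^{m}$ --- precisely the value you need to exceed, so no contradiction results. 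This maximal-fusion situation is exactly what the paper's Case 2 handles: assuming a unique class of cyclic subgroups for each order $2^{i}$ with $i\le m$, it extends the class-by-class count beyond $2^{m}$ to all $i\le e-1$ (using that $D_{2^{e}}$ and $SD_{2^{e}}$ have a unique class of cyclic subgroups of order $2^{i}$ for $3\le i\le e-1$), computes $|G|=|Sol(1,2^{e-1}n_{2'},G)|$ explicitly, and arrives at either $|G|<2^{e}n_{2'}$ or $P\cong Q_{8}$, a contradiction. Without some version of this global count, or another device to rule out total fusion of the involutions, your argument does not close.
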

\begin{proof}
Let $|P|=p^e$.
We will proceed with induction on  $n$. By Frobenius' theorem, $|Sol(1,dp^m,G)|=kdp^m$ for some positive integer $k$.
Firstly, let's suppose that $P$ possesses an elementary abelian   subgroup $N$ of order $p^2$.
By  Theorem  \ref{divv22}, if $p>2$, then 
$|Sol(1,dp^m,G)|\geq dp^{m+1}$. So let $p=2$ and let $d=n_{2'}$. 
If $|Sol(1,2^{m}d,G)| < 2^{m+1}d$, then  $k=1$, implying $|Sol(1,2^{m}d,G)| = 2^md$. Furthermore, by Lemma \ref{ciic} and  our assumption that $p=2$, we deduce that $P$ is either a dihedral $D_{2^e}$  or semi-dihedral group $SD_{2^e}$ with $e \geq 4$.  Therefore for $3\leq i\leq e-1$, $G$ has only one conjugacy classes of cyclic subgroups of order $2^i$.   
 We consider the following two cases:

{\bf Case 1.} First suppose that there exists  $1\leq i\leq m$  such that $|Sol(1,d2^i,G)|\geq 2^{i+1}d$.  We may assume that $i\leq m$ is the biggest integer such that $|Sol(1,d2^i,G)|\geq 2^{i+1}d$. If $i=m$, then we are done.
So suppose that $i<m$.
Then  $|Sol(1,d2^{i},G)|\geq s2^{i+1}$. Consider $a \in B_{2^m}(P)$. 
Since $o(a^{2^{m-i-1}})=2^{i+1}$,  by Proposition 2.3 in \cite{mohsen2}, 
$|Sol([a^{2^{m-i-1}}],d,G)|\geq \varphi(2^{i+1})d=2^id$. Thus
\begin{eqnarray*}
|Sol(1,d2^{i+1},G)| &\geq&  |Sol(1,d2^{i},G)|+|Sol([a^{2^{m-i-1}}],d,G)|\\&=&2^{i+1}d+2^id\\&>&2^{i+1}d.
\end{eqnarray*}
  
By the Frobenius theorem
$2^{i+1}d\mid |Sol(1,d2^{i+1},G)|$.
Therefore $|Sol(1,d2^{i+1},G)|=s2^{i+1}d$ where $s>1$.
It follows that $|Sol(1,d2^{i+1},G)|\geq 2^{i+2}d$,
which is a contradiction.

{\bf Case 2.}
Now, suppose for any  $1\leq i\leq m$  we have $|Sol(1,d2^i,G)|= 2^{i}d$. 
 If $G$ has two conjugacy classes of cyclic subgroups   $\langle g^G\rangle$ and $\langle v^G\rangle$ of order $2^i$ with    $1<2^i\leq 2^m$, then by  Proposition 2.3 in \cite{mohsen2}, $|Sol([w],d,G)|\geq 2^{i-1}d$ for $w\in \{g,v\}$. Consequently, $|Sol(1,2d,G)|>2^id$, so by the Frobenius theorem, $|Sol(1,2d,G)|\geq 2^{i+1}d$ which contradicts our assumption. Thus, we conclude that for any  $1\leq i\leq m$,   $G$ has only one conjugacy class of cyclic subgroups  $\langle y_i^G\rangle $ of  order $2^i$, where $y_i\in \langle x\rangle$ and   $x\in P$ has order $2^{e-1}$. Let $1\leq i\leq m$. 
If $|Sol([y_i],d,G)|>2^{i-1}d$, then $$|Sol(1,2^id,G)|=|Sol(1,2^{i-1}d,G)|+|Sol([y_i],d,G)|>2^{i}d,$$ which contradicts to our assumption.
So $|Sol(B_{2^i}(G),d,G)|=|Sol([y_i],d,G)|=2^{i-1}d$ for any $1\leq i\leq m$. 
If $m\geq 2$, then  
\begin{eqnarray*}
|G|&=&|Sol(1,2^{e-1}d,G)|\\&=&\sum_{i=0}^{e-1}|Sol([y_i],d,G)\\&=&|Sol(1,2d,G)|+\sum_{i=2}^{e-1}|Sol([y_i],d,G)|\\&=&
2d+ \sum_{i=2}^{e-1}d2^{i-1}\\&=&d+ \sum_{i=0}^{e-2}d2^{i}\\&=&
d+(2^{e-1}-1)d\\&<&
2^ed,
\end{eqnarray*}
which is a    contradiction. 

So $m=1$. If $B_4(G)=[y_2^G]$, then by a similar argument as the above we get a contradiction.
So $G$ has a two conjugacy classes of cyclyc subgroups  $\langle y_2^G\rangle$ and $\langle j^G\rangle$ of order $4$. 
Hence, for all $1\leq c\leq e-1$ with $c\neq 2$, $B_{2^c}(G)=[y_c^G]$.
According
to Proposition 2.3 in \cite{mohsen2}, $2d\mid gcd(|Sol([y_2],d,G)|,|Sol([j],d,G)|)$.
It follows that
$$|Sol(1,4d,G)|=|Sol(1,4d,G)|+|Sol([y_2],d,G)|+|Sol([j],d,G)|\geq 6d,$$ and so 
$|Sol(1,4d,G)|\geq 8d$, as $4d\mid |Sol(1,4d,G)|.$
Consequently,   
\begin{eqnarray*}
|G|&=&|Sol(1,2^{e-1}d,G)|\\&=&|Sol(1,4d,G)|+\sum_{i=3}^{e-1}|Sol([y_i],d,G)\\&=&
8d+ \sum_{i=3}^{e-1}d2^{i-1}\\&=&
5d+ \sum_{i=0}^{e-2}d2^{i-1}\\&=&
5d+(2^{e-1}-1)d\\&=&
(4+2^{e-1})d.
\end{eqnarray*}
Since $2^{e-1}\mid |G|$, we have $4+2^{e-1}=8$.
Then $e=3$. Since $G$ has two conjugacy classes of cyclic subgroups of order$4$,  $P=Q_8$, leading to the final contradiction.

 \end{proof}
 
 \begin{rem}
 Let $G$ be a finite group and let $d=p_1{^{\alpha_1}}\ldots p_k^{\alpha_k}$ be a divisor of $exp(G)$  where $p_1< \ldots<p_k$ are prime numbers and $\alpha_1,\ldots,\alpha_k$ are positive integers.  
We have $Sol(1,d,G)\setminus Sol(1,d/p_1,G)=Sol(B_{p_1^{\alpha_1}},d_{p_1'},G)$. Therefore 
$$Sol(1,d,G)=Sol(B_{p_1^{\alpha_1}},d_{p_1'},G)\cup Sol(1,d/p_1,G).$$
If $x\in  Sol(1,d/p_1,G)$, then $p_1^{\alpha_1}\nmid o(x)$ and for all $y\in Sol(B_{p_1^{\alpha_1}},d_{p_1'},G)$, we have $p_1^{\alpha_1}\mid o(y)$. It follows that $Sol(B_{p_1^{\alpha_1}},d_{p_1'},G)\cap Sol(1,d/p_1,G)=\varnothing$.
 
\end{rem}

We now study the case $p=2$ in Question \ref{iss}, when the Sylow $2$-subgroup  is a   generalized quaternion group.
    \begin{lem}\label{dec}
        Let $G$ be a finite group of order $n$, $d$ a divisor of $n_{2'}$ and    $Q_{2^e}\cong P\in Syl_2(G)$. If $|Sol(1,2d,G)|=2d$, then $G=K\rtimes P$ where $K$ is a   $2'$-subgroup of $G$.
    \end{lem}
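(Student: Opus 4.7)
My plan is to exploit the uniqueness of the involution in $P \cong Q_{2^e}$ together with the tightness of Frobenius' bound to force $G$ into the $2$-nilpotent shape $K \rtimes P$.

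Let $z$ denote the unique (central) involution of $P$; since Sylow $2$-subgroups are conjugate and each contains a single involution, the involutions of $G$ form the single conjugacy class $z^G$. Put $C = C_G(z)$ and $n_I = |z^G| = [G:C]$; since $P \leq C$, $n_I$ is odd. Any element $x \in Sol(1,2d,G) \setminus Sol(1,d,G)$ has order $2m$ with $m$ an odd divisor of $d$, and the direct product decomposition $\langle x \rangle = \langle x^m \rangle \times \langle x^2 \rangle$ yields a unique factorisation $x = y z'$ with $z' = x^m$ an involution and $y = x^{m+1} \in \langle x^2 \rangle$ of order $m$ centralising $z'$; conversely every $y \in C$ of odd order dividing $d$ produces such an $x = yz$. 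Partitioning these elements by the attached involution and using conjugation to identify the fibres across $z^G$ yields
\begin{equation*}
|Sol(1,2d,G)| \;=\; |Sol(1,d,G)| \;+\; n_I \cdot |Sol(1,d,C)|.
\end{equation*}
Frobenius' theorem gives $|Sol(1,d,G)| \geq d$, and trivially $n_I \cdot |Sol(1,d,C)| \geq 1$; combined with the hypothesis, both inequalities become equalities, so $|Sol(1,d,G)| = d$ and $n_I \cdot |Sol(1,d,C)| = d$.

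I would then construct the normal $2$-complement by induction on $|G|$. If $G$ possesses a non-trivial normal subgroup $N$ of odd order, the counting identity above descends cleanly to $G/N$ (odd-order normal subgroups embed into every $Sol(1, m, G)$ for $m$ a multiple of $|N|$), so the hypothesis persists and the induction hypothesis supplies a normal $2$-complement of $G/N$ that pulls back to one in $G$. Otherwise, every minimal normal subgroup of $G$ has even order and hence contains all of $z^G$ by minimality; then the classical Brauer--Suzuki theorem (which predates and does not invoke the classification of finite simple groups) places $z$ centrally in $G/O(G)$, so $G/O(G)$ has a unique involution. Using the tight identity now applied inside $G/O(G)$ together with an order comparison against $|P|$, one should pin down $G/O(G) = P$, whence $K := O(G)$ is the desired Hall $\{2\}'$-subgroup and $G = K \rtimes P$.

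The principal obstacle is this last step: converting a single tight counting identity into full $2$-nilpotence without appealing to the Iiyori--Yamaki theorem (which proves the general Frobenius conjecture via the classification and is explicitly disallowed here). Small-rank subtleties, especially at $e = 3$ where $Aut(Q_8) \cong S_4$ permits exceptional extensions such as $SL_2(3)$, will likely demand separate bookkeeping, as will rigorously verifying that the hypothesis persists through each odd-order quotient taken in the induction.
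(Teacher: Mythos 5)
Your counting identity $|Sol(1,2d,G)|=|Sol(1,d,G)|+|z^G|\cdot|Sol(1,d,C_G(z))|$ is correct and does force $|Sol(1,d,G)|=d$, but the argument stops exactly at the decisive point, and the step you defer cannot be completed as outlined. Reducing to $O(G)=1$ and invoking Brauer--Suzuki to place $z$ in the center does not pin down $G=P$: the groups $SL(2,q)$ with $q$ odd (and for $e=3$ already $SL(2,3)$) have a unique central involution, generalized quaternion Sylow $2$-subgroups and trivial odd core, yet are not $2$-groups and admit no normal $2$-complement. In fact $SL(2,3)$ with $d=1$ satisfies $|Sol(1,2d,G)|=2=2d$, so the single tight identity at the involution layer extracts nothing beyond what Brauer--Suzuki already gives; any successful argument must re-use the hypothesis on the higher layers $Sol(1,2^{i}d,G)$, which your outline never does (and which tacitly requires $d$ to carry the odd parts of the relevant centralizers, as in the paper's application with $d=n_{2'}$). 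The descent of the hypothesis to $G/N$ for odd normal $N$ is likewise only asserted: cosets of $N$ may contain no element whose order equals that of its image times nothing, so equality of the two counts needs a genuine argument. Finally, Brauer--Suzuki is a heavy character-theoretic tool, at odds with the paper's stated aim of an elementary proof.

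The paper's proof takes a different and self-contained route. Since $P\cong Q_{2^e}$, every subgroup $H\le P$ with $H\not\cong Q_8$ has $Aut(H)$ a $2$-group, so by the Frobenius normal $p$-complement criterion the only obstruction to $G=K\rtimes P$ is a subgroup $H\cong Q_8$ with $3\mid[N_G(H):C_G(H)]$. In that case an element of order $3$ permutes the three cyclic subgroups of order $4$ of $H$, all order-$4$ subgroups of $P$ become conjugate in $G$, and for each $b$ of order $2^{i}>2$ in the cyclic maximal subgroup $\langle a\rangle$ the centralizer $C_G(b)$ has cyclic Sylow $2$-subgroups, hence a Burnside normal $2$-complement, giving $|Sol([b],d,G)|=d\,o(b)$. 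Summing the layers yields $|Sol(1,d2^{e-1},G)|=2d+\sum_{i=2}^{e-1}2^{i}d=d2^{e}-2d$, which contradicts the Frobenius divisibility $d2^{e-1}\mid|Sol(1,d2^{e-1},G)|$ unless $e=2$, a case excluded at the outset. To salvage your approach you would need an argument of comparable strength that exploits the whole chain of $2$-power layers rather than the involution layer alone.
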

    \begin{proof}

If $e=2$, then $P\cong C_2\times C_2$. By Theorem \ref{divv22},
$|Sol(1,2d,G)|\geq 4d$, which is a contradiction. So $e>2$.
Let $1\neq H\leq P$. If $H\ncong Q_8$, then $Aut(H)$ is a $2$-group, and so $\frac{N_G(H)}{C_G(H)}$ is a $2$-group.
Suppose  $H\cong Q_8$.
If $\frac{N_G(H)}{C_G(H)}$ is a $2$-group, then by the Frobenius normal $p$-complement theorem, $G=K\rtimes P$.   So $\frac{N_G(H)}{C_G(H)}$ is not a $2$-group.
Considering that $Aut(Q_8)=S_4$, there exists an element $y\in G$ of order $3$ such that $i^y = j$ and $j^y = ij$, where $H=\langle i,j\rangle$. We may assume that $i\in \langle a\rangle$ and $P=\langle a,j\rangle$ where $o(a)=2^{e-1}$.
Given that $\frac{P}{Z(P)}\cong D_{2^{e-1}}$ and $i^y=j$, all subgroups of $P$ of order $4$ are conjugate in $G$.
Let   $a\in P$ have an order of $2^{e-1}=|P|/4$. 
Consider $b\in \langle a\rangle$ with an order greater than $2$. Since $C_G(b)$ contains the unique element of order two, Sylow $2$-subgroups of $C_G(b)$ are cyclic. By the  Burnside normal $p$-complement theorem, $C_G(b)=K_b\rtimes S_b$, where $S_b\in Syl_2(C_G(b))$ and $K_b$ is a Hall $2'$-subgroup of $C_G(b)$. Let $d_b=gcd(|C_G(b)|,d)$. Then, 
$$|Sol([b],d,C_G(b))|=|b^G||Sol([b],d_b,C_G(b))|=\frac{n}{d_b2^{e-1}}d_bo(b)/2=do(b).$$

Hence, 
\begin{eqnarray*}
|Sol(1,d2^{e-1},G)|&=& |Sol(1,d2,G)|+ \sum_{i=2}^{e-1}|Sol([a^{2^{e-i-1}}],d,G)|\\&= &2d+\sum_{i=2}^{e-1}2^{i}d\\&=&2d+d(2^e-4)\\&=&d2^e-2d.
\end{eqnarray*} 
Since $d2^{e-1}\mid |Sol(1,d2^{e-1},G)|$, we have $d2^{e-1}\mid 2d$, so $e=2$, which is a contradiction.

    \end{proof}
      \section{Bijections}
  
 The following theorems generalize and confirm Question \ref{iss} for $p>2$. Let $d$ be a divisor of the integer number $n=\rho_1^{\theta_1}\ldots\rho_e^{\theta_e}$, where $\rho_1<\ldots<\rho_e$ are primes and $\theta_1,\ldots,\theta_e$ are positive integers.
To shorten the equations in the next theorem, we use the following notation:  For any divisor $d$ of $n$ and any $1\leq i\leq e$, the divisor  $\frac{d}{gcd(d,\rho_1^{\theta_1}\ldots\rho_i^{\theta_i})}=d_{(\rho_1\rho_2 \ldots\rho_i)'}$ of $d$ is denoted by $\lambda_i(d,n)$. Also, when there is no ambiguity,  $\lambda_i(d,n)$ simply  is denoted   by $\lambda_i(d)$.
\begin{thm}\label{inject}
Consider a finite group $G$ of order  $n=p_1^{\alpha_1} p_2^{\alpha_2} \ldots p_k^{\alpha_k}$ be a divisor of $n$  where $p_1 < p_2 < \ldots < p_k$ are prime numbers. Let $P_i \in Syl_{p_i}(G)$ and $N_i$ be an Dedekind $A$-regular   subgroup of $P_i$ of maximal rank of order  $p_i^{r_i}$ and of exponent $p_i$.
 
 \begin{enumerate}
     \item If $N_1$ is abelian, then 
  $$G\in \mathfrak{cl}(\Pi_{i=1}^k(C_{p_i^{\alpha_i-(r_i-1)}}\times (C_{p_i})^{r_i-1}).$$

    \item  If $N_1$ is not abelian,
   then 
  $$G\in \mathfrak{cl}(C_{2^{\alpha_1-(r_1-1)}}\times E\times Q_8\times \Pi_{i=2}^k(C_{p_i^{\alpha_i-(r_i-1)}}\times (C_{p_i})^{r_i-1})$$
  where $E=1$ whenever $r\leq 4$, and $E=(C_{2})^{r_1-4}$ whenever $r>4$.
\end{enumerate}
In addition,  if 
       $P_1$ is non-cyclic, and $|N_1|=2$, then 
  \begin{enumerate}
    \item  if $|Sol(1,2\cdot n_{2'},G)|>2\cdot n_{2'}$, then   $G\in \mathfrak{cl}(C_{2^{\alpha_1-1}}\times C_2\times  \Pi_{i=2}^k(C_{p_i^{\alpha_i-(r_i-1)}}\times (C_{p_i})^{r_i-1}))$.  

    \item  If $|Sol(1,2\cdot n_{2'},G)|=2\cdot n_{2'}$, then   $G\in \mathfrak{cl}( Q_{2^m}\times   \Pi_{i=2}^k(C_{p_i^{\alpha_i-(r_i-1)}}\times (C_{p_i})^{r_i-1}))$.

\end{enumerate}
\end{thm}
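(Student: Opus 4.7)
The plan is to verify, for each divisor $d$ of $n$, the inequality $|Sol(1,d,G)| \geq |Sol(1,d,H)|$ where $H$ is the proposed target group, and then to upgrade this family of inequalities to the required bijection via a Hall marriage argument together with induction on $|G|$ and the number of prime divisors of $n$.

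First I compute $|Sol(1,d,H)|$ explicitly. Since each candidate $H$ is a direct product of $p$-subgroups, the count factors as $|Sol(1,d,H)| = \prod_i |Sol(1, d_{p_i}, H_i)|$. For the abelian factor $H_i = C_{p_i^{\alpha_i - r_i + 1}} \times (C_{p_i})^{r_i - 1}$ of Case (1), a direct count yields $|Sol(1, p_i^{c_i}, H_i)| = p_i^{\min(c_i + r_i - 1,\, \alpha_i)}$; in Case (2) the corresponding count at $p_1 = 2$ is adjusted using that $Q_8$ contributes one involution and six elements of order four. Next I apply Corollary \ref{divv2222} with $\pi$ taken as the set of primes $p_i$ for which $r_i \geq 2$. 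The hypothesis that each $N_i$ is Dedekind, $A$-regular, of exponent $p_i$, and of order $p_i^{r_i}$ feeds directly into the corollary and produces $d \cdot j \cdot \prod_i p_i^{r_i - 1}$ as a divisor of $|Sol(1, dj, G)|$ for the appropriate $j$, matching the target count above once the saturation at $p_i^{\alpha_i}$ is accounted for. Remark \ref{rem1} supplies the extension needed in Case (2), where $N_1$ is Dedekind but not abelian.

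Then I convert the pointwise counts into a bijection. I form the bipartite graph between $G$ and $H$ with an edge $x \sim y$ precisely when $o(x) \mid o(y)$; a perfect matching is exactly the desired bijection. I verify Hall's marriage condition, which reads: for every downward-closed subset $D$ of the divisor lattice of $\mathrm{exp}(G) = \mathrm{exp}(H)$, one has $|\{x \in G : o(x) \in D\}| \geq |\{y \in H : o(y) \in D\}|$. Because $H$ is a direct product of $p$-groups, each element of $H$ has order decomposing as a product of prime powers, so the Hall condition at a general downward-closed set reduces, one prime at a time, to the principal condition at $D = \mathrm{Div}(d)$, and this principal condition is exactly what the previous step provides.

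For the two additional items of the theorem, when $|N_1| = 2$, I split according to the value of $|Sol(1, 2n_{2'}, G)|$. If it exceeds $2n_{2'}$, Lemma \ref{dis} supplies the extra elements of order dividing $2n_{2'}$ needed to match the target $C_{2^{\alpha_1 - 1}} \times C_2 \times \prod_{i \geq 2} H_i$, and the bijection is then built exactly as in the main cases. If equality holds, Lemma \ref{dec} forces $G = K \rtimes P$ with $P \cong Q_{2^{\alpha_1}}$, and the bijection with $Q_{2^{\alpha_1}} \times \prod_{i \geq 2} H_i$ is constructed directly from the semidirect decomposition. The main obstacle will be the Hall step: extracting the marriage condition at arbitrary downward-closed sets of divisors from the pointwise Frobenius inequalities requires careful exploitation of the multiplicative structure of orders in $H$ together with an induction on the number of distinct primes dividing $n$, and in Case (2) a further bookkeeping argument to keep track of how the $Q_8$ factor interacts with the cyclic and elementary abelian factors at the prime $2$.
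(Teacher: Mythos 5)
Your overall strategy differs from the paper's. The paper does not run a Hall marriage argument from scratch: it imports the structured bijection $f$ from $G$ onto a decomposition $\bigcup_{i,j}X_{\{p_i,\delta_i-j\}}C_{\lambda_i(n)}$ of $C_n$ provided by Theorem 2.9 of \cite{mohsen2}, uses the new counting theorems (Theorem \ref{divv2}) together with Lemma 2.7 of \cite{mohsen2} to arrange that $C_{p_e^{r_e+j-1}}C_{\lambda_e(n)}\subseteq f^{-1}(Sol(1,p_e^j\lambda_e(n),G))$, and then composes $f$ with an explicit order-compatible surjection $\beta$ from $C_n$ onto the target group $H$. In other words, the matching problem is solved once and for all in \cite{mohsen2}, and the new content is only the redistribution map $\beta$ justified by the improved Frobenius-type counts.

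The genuine gap in your proposal is the step where you claim that ``the Hall condition at a general downward-closed set reduces, one prime at a time, to the principal condition at $D=\mathrm{Div}(d)$.'' This reduction is not a formality and is in fact the entire difficulty of the problem. A downward-closed set is a union of principal ideals $\mathrm{Div}(d_1)\cup\cdots\cup\mathrm{Div}(d_m)$, and by inclusion--exclusion the count $|\{x\in G: o(x)\in D\}|$ is an alternating sum of the quantities $|Sol(1,\gcd(d_S),G)|$; lower bounds on each $|Sol(1,d,G)|$ cannot simply be added because of the negative terms. The instructive test case is $H=C_n$: there the principal conditions $|Sol(1,d,G)|\geq d$ are exactly Frobenius' 1895 theorem, yet the existence of the bijection onto $C_n$ is precisely Question \ref{is}, which remained open until \cite{lad} and \cite{mohsen2} and required relative counts $|Sol([y],d,G)|$ over conjugacy classes and a delicate inductive construction, not just the principal inequalities. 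So your plan, as stated, would need either to prove this reduction (which would amount to reproving and strengthening the main theorem of \cite{mohsen2}) or to invoke the structured bijection of \cite{mohsen2} as the paper does. The counting inputs you cite (Corollary \ref{divv2222}, Remark \ref{rem1}, Lemmas \ref{dis} and \ref{dec}) are the right ones and match the paper's, but they only deliver the principal inequalities, which by themselves do not close the argument.
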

\begin{proof}
    
    Let   $X_{0,0}=\{1\}$. 
From Theorem  2.9 of  \cite{mohsen2},
 for all pairs $(i,j)$ with $1\leq i\leq k$ and $0\leq j\leq \alpha_i-1$, there exist a subset $X_{p_i,\alpha_i-j}$ of a   transversal for $C_{\lambda_i(n)}$ in $C_n$ and a bijection $f$ from $G$ onto $\bigcup_{i=1}^k\bigcup_{j=0}^{\delta_i-1}X_{\{p_i,\delta_i-j\}} C_{\lambda_i(n)}\cup X_{0,0}$ such that 
   
    $$o(z)\mid o(f(z)^{o(f(z)C_{\lambda_i(d) \cdot p_i^{\delta_i-j}})}),$$
for  all $z\in Sol(B_{p_i^{\delta_i-j}},\lambda_i(n),G)$.
Let $i\leq e\leq k$ and $0\leq j\leq \alpha_e-\theta_e$. 
First suppose that $2\nmid n$. From  Theorem \ref{divv2}, $ |Sol(1,p_e^j\lambda_e(n),G)|\geq \lambda_e(n)p_e^{r_e+j-1}$.
According to Lemma 2.7 of \cite{mohsen2}, we may assume that $C_{p_e^{r_e+j-1}}C_{\lambda_e(n)}\subseteq f^{-1}(Sol(1, p_e^j\lambda_e(n),G))$.

 Define  $\beta$ from $C_n$ onto $H:=\Pi_{i=1}^k(C_{p_i^{\alpha_i-(r_i-1)}}\times (C_{p_i})^{r_i-1})$ such that 
$$\beta(C_{p_i^{j+r_i-1}}C_{\lambda_i(n)})=(C_{p_i^{j}}\times (C_{p_i})^{r_i-1})W_i,$$
for all $i=1,\ldots,k$,  and $j=0,1,\ldots,\alpha_i-r_i$ where $W_i$ is a Hall subgroup of $H$ of order $\lambda_i(n)$.
Then 
 $o(z)\mid o(\beta(f(z))$
for  all $z\in Sol(B_{p_i^{\delta_i-j}},\lambda_i(n),G)$.

Now, suppose  $p_1=2$. Let $U:=C_{2^{j}}\times E\times Q_8$.
  Define  $\beta$ from $C_n$ onto $H:=(C_{2^{j+r_1-1}})=U\times \Pi_{i=2}^k(C_{p_i^{\alpha_i-(r_i-1)}}\times (C_{p_i})^{r_i-1})$ such that 
$$\beta(C_{p_i^{j+r_i-1}}C_{\lambda_i(n)})=C_{p_i^{j}}\times (C_{p_i})^{r_i-1}T_i,$$
and $\beta(T_i)=T_i$ 
for all $i=2,\ldots,k$,  $j=0,1,\ldots,\alpha_i-r_i$ where $T_i$ is a Hall subgroup of $H$ of order $\lambda_i(n)$ and $\beta(C_{2^{j+r_1-1}}C_{\lambda_1(n)})= U T_1$,
and $\beta(T_1)=T_1$      for all    $j=0,1,\ldots,\alpha_1-r_1$ where $T_1$ is a Hall subgroup of $H$ of order $\lambda_1(n)$.
    Then 
 $o(z)\mid o(\beta(f(z))$
for  all $z\in G$.

The proof of the second part follows a similar approach as outlined in the preceding cases, utilizing Lemma \ref{dis}.
\end{proof}
\begin{rem}\label{82}
In the previous theorem 
    if $|Sol(1,2\cdot n_{2'},G)|>2^m\cdot n_{2'}$, then  by the same argument we may show that $G\in \mathfrak{cl}(C_{2^{\alpha_1-1}}\times (C_2)^{m}\times  \Pi_{i=2}^k(C_{p_i^{\alpha_i-(r_i-1)}}\times (C_{p_i})^{r_i-1}))$.  
\end{rem}
\begin{thm}\label{inject221}

Consider a finite group $G$ of order $n$, and let $d=p_1^{\alpha_1} p_2^{\alpha_2} \ldots p_k^{\alpha_k}$ be a divisor of $n$, where $p_1 < p_2 < \ldots < p_k$ are prime numbers. Define $r_i=m(G,1,p_i)$ for $i=2,\ldots,k$, and let  $R:= \Pi_{i=1}^k(C_{p_i^{\alpha_i-(r_i-1)}}\times (C_{p_i})^{r_i-1})$. 
Additionally, if $P_1$ is a non-abelian $2$-group, let $d=n_{2'}$, and let  $r_1=3$ whenever $P_1$ has an abelian subgroup of rank $3$, otherwise, let $r_1=2$. 
Let  $R:=Q_{2^
{\alpha_1}}\times \Pi_{i=2}^k(C_{p_i^{\alpha_i-(r_i-1)}}\times (C_{p_i})^{r_i-1})$ whenever $|Sol(1,2d_{2'},G)|=2d_{2'}$, otherwise let $R:=C_{2^{\alpha_1-r_1+1}}\times (C_2)^{r_1-1}\times \Pi_{i=2}^k(C_{p_i^{\alpha_i-(r_i-1)}}\times (C_{p_i})^{r_i-1})$. 

Then there exists a subset $X$ of a transversal for $R$ in $C_{n/d}\times R$, along with a bijection $f$ from $Sol(1,d,G)$ onto $X R$, such that $o(x)\mid o(f(x))$ holds for all $x\in Sol(1,d,G)$.

\end{thm}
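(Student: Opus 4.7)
The plan is to mimic the proof of Theorem \ref{inject} but restricted to the set $Sol(1,d,G)$ rather than to all of $G$. First, using Theorem 2.9 of \cite{mohsen2}, I would obtain an initial bijection $f_0$ from $G$ onto a union of cosets in $C_n$ satisfying, on each layer $Sol(B_{p_i^{\alpha_i-j}},\lambda_i(n),G)$, the divisibility condition $o(z)\mid o(f_0(z)^{o(f_0(z)C_{\lambda_i(n) p_i^{\alpha_i-j}})})$. Restricting $f_0$ to $Sol(1,d,G)$ then gives a bijection onto a union $Y$ of cosets of cyclic subgroups of the form $C_{\lambda_i(d)}$ inside $C_n$.

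Next, I would invoke the appropriate Frobenius-type lower bound on each $|Sol(1,p_i^j\lambda_i(d),G)|$. For $p_i$ odd I would use Theorem \ref{divv22} (if the maximal $A$-regular subgroup $N_i$ is abelian) or Theorem \ref{Frob3} via the definition $r_i = m(G,1,p_i)$; for $p_1=2$ the more delicate Theorems \ref{2va} and \ref{22va}, together with Lemmas \ref{dis} and \ref{dec}, cover the cases where $P_1$ is non-abelian or generalized quaternion. By Lemma 2.7 of \cite{mohsen2}, these lower bounds let me arrange $f_0$ so that for each admissible pair $(i,j)$ the coset chain $C_{p_i^{r_i+j-1}}C_{\lambda_i(d)}$ lies inside $f_0^{-1}(Sol(1,p_i^j\lambda_i(d),G))$.

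Then I would build an order-preserving map $\beta$ from $Y\subseteq C_n$ onto $XR$ for a suitable transversal subset $X$ of $C_{n/d}\times R$. Explicitly, for odd primes, $\beta$ sends $C_{p_i^{j+r_i-1}}C_{\lambda_i(d)}$ to $(C_{p_i^j}\times (C_{p_i})^{r_i-1})W_i$, where $W_i$ is the Hall $\lambda_i(d)$-subgroup of $R$. For $p_1=2$, when $|Sol(1,2d_{2'},G)|>2d_{2'}$, the analogous rule uses the factor $C_{2^{\alpha_1-r_1+1}}\times (C_2)^{r_1-1}$; when instead $|Sol(1,2d_{2'},G)|=2d_{2'}$, I would route the $2$-part of each level through the generalized quaternion factor $Q_{2^{\alpha_1}}$, matching orders by sending the unique involution to the unique involution of $Q_{2^{\alpha_1}}$ and the cyclic layers above it to the cyclic maximal subgroup of $Q_{2^{\alpha_1}}$. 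The composition $f:=\beta\circ f_0|_{Sol(1,d,G)}$ is then the required bijection.

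The main obstacle will be the generalized-quaternion branch, because there $\beta$ is no longer an inclusion between cyclic groups and one must verify both surjectivity onto $Q_{2^{\alpha_1}}$ and the divisibility condition at each $2$-power level. For this I would compare the counts in Lemma \ref{dec} (which forces $G$ to split as $K\rtimes P$ with $P\cong Q_{2^{\alpha_1}}$ and pins down $|Sol(1,2^j d_{2'},G)|$ on every level) against the layer sizes of $Q_{2^{\alpha_1}}\times W_1$, where $W_1$ is the Hall $d_{2'}$-subgroup of $R$; the two match exactly, so the bijection on each $2$-power level is essentially forced. Once this delicate case is settled, the odd-prime levels proceed uniformly as in Theorem \ref{inject}, and the combination of Remark \ref{82} and the assumption $r_i=m(G,1,p_i)$ guarantees that the construction is consistent across all primes dividing $d$.
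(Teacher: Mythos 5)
Your proposal follows essentially the same route as the paper: the initial bijection from Theorem 2.9 of \cite{mohsen2}, the Frobenius-type lower bounds (Theorem \ref{Frob3} for the odd primes via $r_i=m(G,1,p_i)$, and the $p=2$ machinery otherwise), the rearrangement via Lemma 2.7 of \cite{mohsen2}, the order-preserving map $\beta$ onto $C_{n/d}\times R$, and Lemma \ref{dec} to split off $G=K\rtimes Q_{2^{\alpha_1}}$ in the quaternion branch. Your layer-by-layer count comparison in the quaternion case is if anything more explicit than the paper's terse "induction on $d$, leveraging the preceding part," but it is not a different argument.
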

\begin{proof}
We will prove the case for $p_1=2$, as the argument for $p_1>2$ follows by similar reasoning as in the proof of Theorem \ref{inject}, utilizing Theorem \ref{Frob3}.

Let $X_{0,0}={1}$. First, suppose that $|Sol(1,2d_{2'},G)|>2d_{2'}$.   From Theorem 2.9 of \cite{mohsen2}, for all pairs $(i,j)$ with $1\leq i\leq k$ and $0\leq j\leq \alpha_i-1$, there exists a subset $X_{p_i,\alpha_i-j}$ of a transversal for $C_{\lambda_i(d)}$ in $C_n$ and a bijection $f$ from $Sol(1,d,G)$ onto $\bigcup_{i=1}^k\bigcup_{j=0}^{\delta_i-1}X_{\{p_i,\delta_i-j\}} C_{\lambda_i(d)}\cup X_{\{0,0\}}$ such that 
   
    $$o(z)\mid o(f(z)^{o(f(z)C_{\lambda_i(d) \cdot p_i^{\delta_i-j}})}),$$
for  all $z\in Sol(B_{p_i^{\delta_i-j}}(G),\lambda_i(d),G)$.
Let $i\leq e\leq k$ and $0\leq j\leq \alpha_e-\theta_e$. 
  From  Theorem \ref{divv2}, $ |Sol(1,p_e^j\lambda_e(d),G)|\geq \lambda_e(d)p_e^{r_e+j-1}$.
According to Lemma 2.7 of \cite{mohsen2}, we may assume that $C_{p_e^{r_e+j-1}}C_{\lambda_e(d)}\subseteq f^{-1}(Sol(1, p_e^j\lambda_e(d),G))$.

 Define  $\beta$ from $C_n$ onto $H:=C_{n/d}\times R$ such that 
$$\beta(C_{p_i^{j+r_i-1}}C_{\lambda_i(d)})=(C_{p_i^{j}}\times (C_{p_i})^{r_i-1})W_i,$$
for all $i=1,\ldots,k$,  and $j=0,1,\ldots,\alpha_i-r_i$ where $W_i$ is a Hall subgroup of $H$ of order $\lambda_i(d)$.
Then 
 $o(z)\mid o(\beta(f(z))$
for  all $z\in Sol(1,d,G)$.

Now, suppose  $|Sol(1,2d_{2'},G)|=2d_{2'}$. 
   We employ Lemma \ref{dec}, which states that if $|Sol(1,2d_{2'},G)|=2d_{2'}$, then $G=K\rtimes Q_{2^m}$ for some Hall subgroup $K$ of $G$. We proceed with a proof by induction on $d$, leveraging the preceding part.

\end{proof}

\begin{thm}\label{mmm}
Let $G$ be a  finite  group of order $n=p_1^{\alpha_1}\ldots p_k^{\alpha_k}$ where $p_1<\ldots<p_k$ are prime numbers, and let $P_i\in Syl_{p_i}(G)$ for all $i=1,\ldots,k$. Then $G\in \mathfrak{cl}(\Pi_{i=1}^k Q_i)$ where     for $i=1,2,\ldots,k$, we have
$Q_i=C_{p_i^{\alpha_1}}$ if    $P_i$ is cyclic and $Q_i=C_{p_i^{\alpha_i}}\times C_{p_i}$ whenever $P_i$ is not cyclic and either    $p_i>2$ or $p_i=2$ and $|Sol(1,n_{p_i'}p_i,G)|>2n_{2'}$,
if $Q_1$ is not cyclic, and  $|Sol(1,n_{p_i'}p_i,G)|=2n_{2'}$, then  $Q_1$ is  generaized quaternion group of order $2^{\alpha_1}.$ 
\end{thm}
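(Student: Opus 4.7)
The theorem is essentially a translation of Theorem~\ref{inject} (and the auxiliary Theorem~\ref{inject221}) into the stated language of Sylow subgroups; the strategy is to make a concrete choice of a Dedekind $A$-regular subgroup $N_i\le P_i$ of maximal rank for each $i$ and invoke those results.

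For every $i$ such that $P_i$ is cyclic, I would take $N_i$ to be the subgroup of order $p_i$, so $r_i=1$, and the corresponding target factor coming out of Theorem~\ref{inject} is $C_{p_i^{\alpha_i}}=Q_i$. For every $i$ with $P_i$ non-cyclic and $p_i$ odd, Lemma~\ref{noncyc} supplies an elementary abelian subgroup of $P_i$ of order $p_i^2$; such a subgroup is abelian (hence Dedekind) and of exponent $p_i$, so $A$-regular is automatic, and one obtains $r_i=2$ and the factor $C_{p_i^{\alpha_i-1}}\times C_{p_i}=Q_i$ in Theorem~\ref{inject}. Assembling these choices across $i=1,\dots,k$ and invoking Theorem~\ref{inject} (case~(1), since each $N_i$ is abelian) already produces the required bijection in the case when $P_1$ is either cyclic, or $p_1$ is odd and non-cyclic.

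The $2$-primary case requires a little more care. Suppose $p_1=2$ and $P_1$ is non-cyclic. If $|Sol(1,2\,n_{2'},G)|>2\,n_{2'}$, then $P_1$ has more than one involution, and Theorem~\ref{2va}/Theorem~\ref{22va} together with the rank structure used in Lemma~\ref{dis} let us identify an abelian $A$-regular subgroup of $P_1$ of rank two; applying Theorem~\ref{inject} then yields the factor $C_{2^{\alpha_1-1}}\times C_2=Q_1$ (or Remark~\ref{82} if extra rank is present, which only strengthens the conclusion). If instead $|Sol(1,2\,n_{2'},G)|=2\,n_{2'}$, the contrapositive of Lemma~\ref{dis} forces $P_1$ to be generalized quaternion (since $P_1$ is non-cyclic by hypothesis), and Lemma~\ref{dec} gives a semidirect decomposition $G=K\rtimes P_1$ with $K$ a Hall $2'$-subgroup. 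I would apply the already-settled odd-prime case of the theorem to $K$ to obtain a bijection $f_K:K\to\Pi_{i=2}^{k}Q_i$ with $o(x)\mid o(f_K(x))$, and then define $f:G\to Q_{2^{\alpha_1}}\times\Pi_{i=2}^{k}Q_i$ by $f(kp)=(p,f_K(k))$, noting that $P_1\cong Q_{2^{\alpha_1}}$ so the $2$-part of the order is preserved trivially and the $2'$-part by $f_K$.

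The main obstacle is checking that the divisibility condition $o(x)\mid o(f(x))$ is preserved simultaneously across all primes; for an element $x\in G$ whose order involves several primes, every prime-power part of $o(x)$ must divide the corresponding part of $o(f(x))$. This is exactly what the combined bijection in Theorem~\ref{inject} provides, and in the generalized quaternion case the semidirect-product structure from Lemma~\ref{dec} lets the $2$- and $2'$-parts be handled independently. Beyond this conceptual point, the remainder of the argument is routine matching of Sylow target factors to the prescribed $Q_i$.
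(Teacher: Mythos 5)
Your proposal follows essentially the same route as the paper: the paper's own proof simply observes that a non-cyclic, non-generalized-quaternion Sylow subgroup contains an elementary abelian subgroup of order $p^2$ (Lemma \ref{noncyc}) and then cites Theorem \ref{inject}, whose addendum already packages both the $|Sol(1,2n_{2'},G)|>2n_{2'}$ case and the generalized quaternion case via Lemma \ref{dec}. Your write-up is in fact more detailed than the paper's two-sentence argument (in particular you re-derive the quaternion case from the semidirect decomposition rather than just citing part (2) of Theorem \ref{inject}), but the underlying mechanism is identical.
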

\begin{proof}
    If $P_i$ is not cyclic or generalized quternion, then from Lemma \ref{noncyc}, $P$ has an elementary abelian  subgroup of order $p^2$.   Then the result follows from Theorem \ref{inject}.
\end{proof}

Now, we can confirm Question \ref{iss}.

 \begin{thm}
Let $G$  be a finite group of order $n$ and $p$ be a prime divisor of $n$. If a Sylow $p$-subgroup of $G$ is neither cyclic nor generalized quaternion, then   $G\in \mathfrak{cl}(C_{\frac{n}{p}} \times C_p)$. 
\end{thm}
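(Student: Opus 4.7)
My plan is to deduce the theorem directly from Theorem \ref{mmm} via a coordinate-wise adjustment on the prime-power factors of the target. First I would apply Theorem \ref{mmm} to $G$ to get a divisibility-preserving bijection $G \to \Pi_{i=1}^k Q_i$, where each $Q_i$ is either $C_{p_i^{\alpha_i}}$, $C_{p_i^{\alpha_i-1}} \times C_{p_i}$, or (only when $p_i=2$) the generalized quaternion group $Q_{2^{\alpha_1}}$. The hypothesis that the Sylow $p$-subgroup is neither cyclic nor generalized quaternion, combined with Lemma \ref{dis}, forces $|Sol(1, p\, n_{p'}, G)| \geq p^2 n_{p'} > p\, n_{p'}$, which rules out the generalized quaternion branch for the $p$-factor and guarantees $Q_p = C_{p^{\alpha_p - 1}} \times C_p$.

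Next I would construct, prime by prime, a divisibility-preserving bijection $\phi_i \colon Q_i \to T_i$, where $T_p = C_{p^{\alpha_p - 1}} \times C_p$ is the $p$-part of $C_{n/p} \times C_p$ and $T_i = C_{p_i^{\alpha_i}}$ for each $i \neq p$. For $i=p$, and for each $i$ where $Q_i$ is already cyclic, I take $\phi_i$ to be the identity. In the remaining cases I need a divisibility-preserving bijection from a non-cyclic $p_i$-group (either $C_{p_i^{\alpha_i - 1}} \times C_{p_i}$ or $Q_{2^{\alpha_i}}$) onto $C_{p_i^{\alpha_i}}$. By Hall's marriage theorem applied to the bipartite graph of order-divisibility, this reduces to the single family of inequalities
\[
 |\{x \in Q_i : o(x) \mid p_i^j\}| \ \geq \ p_i^{\min(j, \alpha_i)} \quad \text{for every } 0 \leq j \leq \alpha_i,
\]
which I verify case by case. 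For $Q_i = C_{p_i^{\alpha_i - 1}} \times C_{p_i}$ the left side equals $1$ at $j = 0$ and $p_i^{\min(j+1, \alpha_i)}$ for $j \geq 1$, so the inequality is strict at intermediate $j$ and an equality at the endpoints. For $Q_i = Q_{2^{\alpha_i}}$ the analogous check uses $\exp(Q_{2^{\alpha_i}}) = 2^{\alpha_i - 1}$ together with the standard count that $Q_{2^{\alpha_i}}$ has exactly $2 + 2^{\alpha_i - 1}$ elements of order $4$, and gives the same inequality.

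Finally I would assemble the global bijection $\phi = \Pi_i \phi_i \colon \Pi_i Q_i \to C_{n/p} \times C_p$. Because the factors $Q_i$ (and the corresponding Sylow pieces of the target) have pairwise coprime orders, the order of an element of a direct product of $p_i$-groups is the product of the orders of its coordinates, so a coordinate-wise product of divisibility-preserving bijections is itself divisibility-preserving. Composing $\phi$ with the bijection supplied by Theorem \ref{mmm} then produces the required divisibility-preserving bijection $G \to C_{n/p} \times C_p$, proving $G \in \mathfrak{cl}(C_{n/p} \times C_p)$. The only point where any real verification is needed is the Hall inequality in the two non-cyclic options, but that is a short and routine finite-group count; all the substantive analytic work has been packaged into Theorem \ref{mmm} and Lemma \ref{dis}.
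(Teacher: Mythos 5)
Your proposal is correct, and its skeleton coincides with the paper's: first invoke Theorem \ref{mmm} to place $G$ in $\mathfrak{cl}(\Pi_{i=1}^k Q_i)$, then compose with a second order-divisibility-preserving bijection from $\Pi_{i=1}^k Q_i$ onto $C_{n/p}\times C_p$. The difference lies in how that second bijection is obtained. The paper, for $p>2$, simply cites the main result of \cite{mohsen} for abelian groups, and for $p=2$ it refers back to Theorem \ref{fmain}; you instead build the map explicitly, coordinate by coordinate, by verifying the Hall condition on the nested down-sets $\{x\in Q_i : o(x)\mid p_i^j\}$. This buys two things. First, the argument becomes self-contained. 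Second, and more substantively, it covers a configuration the paper's written proof passes over in silence: when $p$ is odd but the Sylow $2$-subgroup of $G$ is generalized quaternion, Theorem \ref{mmm} delivers $Q_1=Q_{2^{\alpha_1}}$, and one genuinely needs a divisibility-preserving bijection $Q_{2^{\alpha_1}}\to C_{2^{\alpha_1}}$; your Hall count supplies it, and it matters that the target there is the full cyclic group rather than $C_{2^{\alpha_1-1}}\times C_2$, for which the count fails --- exactly the $C_m\times Q_8$ obstruction recalled in the paper's introduction. Your appeal to Lemma \ref{dis} to exclude the quaternion branch for the distinguished prime is legitimate because that branch only arises for $p=2$, which is then automatically the smallest prime divisor, as that lemma requires. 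Two minor points: you silently correct the paper's typo $C_{p_i^{\alpha_i}}\times C_{p_i}$ to $C_{p_i^{\alpha_i-1}}\times C_{p_i}$ (necessary for the orders to match), and your final assembly rests on the elementary fact that element orders multiply across coprime coordinates, which is fine.
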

\begin{proof}
If $p=2$, then the result follows from Theorem \ref{fmain}.
So $p>2$.
 Then $P$, has an elementary abelian normal subgroup of order $p^2$, and  from Theorem \ref{mmm}, 
   $G\in \mathfrak{cl}(\Pi_{i=1}^k Q_i)$ where     for $i=1,2,\ldots,k$, we have
$Q_i=C_{p_i^{\alpha_1}}$ if    $P_i$ is cyclic and $Q_i=C_{p_i^{\alpha_i}}\times C_{p_i}$ whenever $P_i$ is not cyclic. Since $\Pi_{i=1}^k Q_i$ is an abelian group, from main result of \cite{mohsen}, 
 $\Pi_{i=1}^k Q_i\in \mathfrak{cl}(C_{\frac{n}{p}} \times C_p)$. Consequently, 
 $G\in \mathfrak{cl}(C_{\frac{n}{p}} \times C_p)$. 
\end{proof}
\section{Applications}

Let $Gr(n)$ denote the set of all non-cyclic finite groups of order $n$.
For any positive integer $n$, define $\Psi_1(n):=\{G: \psi(H)\leq \psi(G), \ \forall H\in Gr(n)\}$.
For any positive integer $k\geq 2$, we define $\Psi_k(n)$ recursively as follows: $\Psi_k(n)=\{G: \psi(H)\leq \psi(G), \ \forall H\in Gr(n)\setminus \bigcup_{i=1}^{k-1}\Psi_i(n)\}$. 
\begin{lem}\label{same}
    Let $G\in \mathfrak{cl}(H)$ where $G$ and $H$ are two finite group.
    Then $\psi(G)=\psi(H)$ if and only if $G$ and $H$ are groups of the same order type.
\end{lem}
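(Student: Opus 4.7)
The plan is to reduce the statement to an equivalent one about the exact-order distribution $\{|B_d(G)|\}_d$, and then exploit the divisibility constraint coming from the bijection $f\colon G\to H$.

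First I would record the standard translation between the two conditions. Since $Sol(1,d,G)=\{x\in G:x^d=1\}=\bigcup_{e\mid d}B_e(G)$ as a disjoint union, one has $|Sol(1,d,G)|=\sum_{e\mid d}|B_e(G)|$. By Möbius inversion over the divisor lattice, the sequence $(|Sol(1,d,G)|)_{d\geq 1}$ determines and is determined by $(|B_d(G)|)_{d\geq 1}$. Hence $G$ and $H$ are of the same order type if and only if $|B_d(G)|=|B_d(H)|$ for every positive integer $d$.

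For the easy direction, assume $G$ and $H$ are of the same order type. Then
\[
\psi(G)=\sum_{d}d\cdot|B_d(G)|=\sum_{d}d\cdot|B_d(H)|=\psi(H),
\]
using the equivalent formulation above.

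For the converse, I would use the bijection $f\colon G\to H$ guaranteed by $G\in\mathfrak{cl}(H)$, which satisfies $o(x)\mid o(f(x))$ for every $x\in G$. In particular $o(x)\leq o(f(x))$ for all $x$, so summing over $G$ gives
\[
\psi(G)=\sum_{x\in G}o(x)\leq\sum_{x\in G}o(f(x))=\sum_{y\in H}o(y)=\psi(H).
\]
The assumption $\psi(G)=\psi(H)$ forces equality in every term: $o(x)=o(f(x))$ for all $x\in G$. Consequently $f$ restricts, for each $d$, to a bijection $B_d(G)\to B_d(H)$, which yields $|B_d(G)|=|B_d(H)|$ for every $d$, i.e.\ $G$ and $H$ are of the same order type.

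The only substantive step is the observation that a divisibility-preserving bijection whose element-order sums coincide must actually be order-preserving term by term; the rest is just the Möbius-inversion identification of the two invariants. So there is no real obstacle here, only careful bookkeeping.
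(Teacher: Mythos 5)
Your proof is correct and follows essentially the same route as the paper: the key step in both is that the divisibility condition $o(x)\mid o(f(x))$ gives $\psi(G)\leq\psi(H)$ term by term, so equality of the sums forces $o(x)=o(f(x))$ for every $x$, which yields the same order type. The only difference is that you spell out the M\"obius-inversion equivalence between the $|Sol(1,d,\cdot)|$ data and the $|B_d(\cdot)|$ data, which the paper leaves implicit.
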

\begin{proof}
Let $f$ be a bijection from $G$ onto $H$ such that $o(x)\mid o(f(x))$. First suppose that $\psi(G)=\psi(H)$. If there exists $x\in G$ such that $o(x)<o(f(x))$, then 
$\psi(G)<\psi(H)$, which is a contradiction.

Clearly, if $G$ and $H$ are groups of the same order type, then $\psi(G)=\psi(H)$.
    
\end{proof}
\begin{lem}\label{pnil}
   Let $G\in \mathfrak{cl}(H)$ where $G$ and $H$ are two finite group such that  $\psi(G)=\psi(H)$.
   Let $p$ be a prime divisor of $|G|$. Then    $H$ is   a $p$-nilpotent group if and only if $G$ is a $p$-nilpotent group.  
\end{lem}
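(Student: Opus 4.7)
The plan is to reduce $p$-nilpotency of both $G$ and $H$ to a single counting invariant, namely $|Sol(1,n_{p'},G)|$, and then to observe that this invariant coincides for $G$ and $H$ under the hypothesis of the lemma.

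First I would invoke Lemma \ref{same}: since $G\in \mathfrak{cl}(H)$ and $\psi(G)=\psi(H)$, the groups $G$ and $H$ are of the same order type, hence $|Sol(1,d,G)|=|Sol(1,d,H)|$ for every positive integer $d$. In particular, with $d=n_{p'}$, we obtain $|Sol(1,n_{p'},G)|=|Sol(1,n_{p'},H)|$.

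Next I would appeal to the sharp form of Frobenius' theorem: if $m$ divides $|L|$, then $|Sol(1,m,L)|$ is a multiple of $m$, and the equality $|Sol(1,m,L)|=m$ forces $Sol(1,m,L)$ to be a normal subgroup of $L$ of order $m$. Apply this with $L\in\{G,H\}$ and $m=n_{p'}$. On the one hand, if $G$ is $p$-nilpotent with normal $p$-complement $K$, then every $p'$-element of $G$ lies in $K$ (its image in $G/K$ would have order that is simultaneously a $p$-power and coprime to $p$, hence trivial), so $Sol(1,n_{p'},G)=K$ and $|Sol(1,n_{p'},G)|=n_{p'}$. Conversely, if $|Sol(1,n_{p'},G)|=n_{p'}$, then Frobenius sharpness yields a normal subgroup of $G$ of order $n_{p'}$, which is coprime to $p$ and is therefore a normal $p$-complement. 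The analogous equivalence holds for $H$.

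Combining the two steps, $G$ is $p$-nilpotent iff $|Sol(1,n_{p'},G)|=n_{p'}$ iff $|Sol(1,n_{p'},H)|=n_{p'}$ iff $H$ is $p$-nilpotent. The only substantive ingredient is the sharp form of Frobenius' theorem, which is classical; no genuine obstacle arises, and the lemma follows directly.
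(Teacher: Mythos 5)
Your first step coincides exactly with the paper's: both invoke Lemma \ref{same} to pass from $\psi(G)=\psi(H)$ to the statement that $G$ and $H$ are groups of the same order type. You diverge at the second step: the paper simply cites Corollary 3.4 of \cite{mohsen}, which asserts that $p$-nilpotency is an invariant of the order type, whereas you prove this from scratch by reducing $p$-nilpotency of a group $L$ of order $n$ to the single numerical condition $|Sol(1,n_{p'},L)|=n_{p'}$. That reduction is correct: the forward direction (the normal $p$-complement is precisely the set of $p'$-elements) is elementary, and the converse follows from the subgroup criterion for solution sets. The one point to be careful about is your description of that criterion as ``the sharp form of Frobenius' theorem, which is classical.'' Stated for an arbitrary divisor $m$ of $|L|$, the assertion that $|Sol(1,m,L)|=m$ forces $Sol(1,m,L)$ to be a subgroup is the Frobenius conjecture, proved only in 1991 by Iiyori and Yamaki using the classification of finite simple groups; it is not classical, and invoking it in that generality would clash with the paper's explicit claim of avoiding the classification. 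Fortunately the only instance you use is $m=n_{p'}$, a Hall divisor of $|L|$, and in the coprime case $\gcd(m,|L|/m)=1$ the subgroup conclusion is indeed due to Frobenius himself and is elementary (the resulting subgroup is automatically characteristic, hence a normal $p$-complement). So your argument is sound provided you cite the Hall-divisor case specifically rather than the general conjecture; in exchange, your proof is self-contained where the paper's relies on an external citation.
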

\begin{proof}
    From Lemma \ref{same}, $G$ and $H$ are groups of the same order type.
    From Corollary 3.4 of \cite{mohsen}, $H$ is   a $p$-nilpotent group if and only if $G$ is a $p$-nilpotent group.  
\end{proof}
The papers cited (\cite{1}, \cite{2}, \cite{10}, \cite{11}, and \cite{15}) explore the structural characteristics of elements of    $\Psi_2$. The subsequent remark illustrates how our findings can be effectively applied to determine  the structure of the elements of  $\Psi_2$
\begin{rem}

The principal focus of several articles is to characterize the structure of elements within $\Psi_2(n)$. Using our results, we provide a straightforward proof to classify all elements of $\Psi_2(n)$. Suppose $G\in \Psi_2(n)$. If $G$ possesses a non-cyclic Sylow $p$-subgroup $P$ for $p>2$ or $p=2$, and $P\ncong Q_{2^m}$ with $|P|=2^m$, then according to Theorem \ref{mmm}, $G\in \mathfrak{cl}(C_{n/p}\times C_p)$. Since $G\in \Psi_2(n)$, $\psi(G)=\psi(C_{n/p}\times C_p)$. By Lemma \ref{same} and Lemma  \ref{pnil}, $G$   is a nilpotent group. 
 Let $y\in C_{n/p}\times C_p$ of order $n/p$. There exists $u\in G$ such that $f(u)=y$. If $G$ does not contain   any elements of order $n/p$, then $o(u)<o(y)$, hence $G\not\in \Psi_2(n)$, leading to a contradiction. Thus, $o(u)=n/p$. Consequently, $G\cong C_{n/p}\rtimes C_p$. If $G$ is non-abelian, then $\psi(G)<\psi(C_{n/p}\times C_p)$, contradicting the assumption. Hence, $G\cong C_{n/p}\rtimes C_p$. It can then be easily shown that $p$ is the smallest prime divisor of $n$ such that $p^2\mid n$. Therefore, all Sylow subgroups of $G$ are cyclic or generalized quaternion.
     
 If $2\mid n$, and Sylow $2$-subgroup is generalized quaternion, then by Theorem \ref{fmain}, 
 $G\in \mathfrak{cl}(C_{n_{2'}}\times Q_{2^m})$.
 Since $G\in \Psi_2(n)$, $\psi(G)=\psi(C_{n_{2'}}\times Q_{2^m})$.
By Lemma \ref{same} and Lemma  \ref{pnil}, $G$   is a nilpotent group. 
Furthermore, all Sylow subgroups are the same order type.  
If $m>3$, then $\psi(C_{n_{2'}}\times Q_{2^m})<\psi(C_{2^{m-1}\cdot n_{2'}}\times C_2)$, which is a contradiction. Therefore  $G\cong C_{n_{2'}}\times Q_{8}$.
So suppose that all Sylow subgroups of $G$ are cyclic.
By Theorem \ref{fmain}, 
   $G\in \mathfrak{cl}(C_{n_{(pq)'}}\times (P\rtimes Q))$ where $P\in Syl_p(G)$ and $Q\in Syl_q(G)$. 
   By Lemma \ref{same} and Lemma  \ref{pnil}, 
    $G\cong C_{n_{(pq)'}}\times (P\rtimes Q).$

 \end{rem}
 Now we embark on the study of finite groups with the third-highest sum of element orders.
 We first establish that if $G\in \Psi_3(n)$ possesses either an abelian group of rank 3  two non-cyclic Sylow subgroups with distinct orders, then $G$ is classified as a nilpotent group.
 \begin{lem}\label{solli}
 Consider a finite group $G$ of order  $n=p_1^{\alpha_1} p_2^{\alpha_2} \ldots p_k^{\alpha_k}$ be a divisor of $n$  where $p_1:=2 < p_2 < \ldots < p_k$ and $\alpha_1>1$.
 If  $G$ has  an elementary  abelian subgroup of    order $p^3$
and   $\psi(G)\geq \psi(C_{n/p^2}\times  (C_p)^{2})$, then $G\cong C_{n/|P|}\times P$ is a nilpotent group.
   
 \end{lem}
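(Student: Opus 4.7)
The plan is to apply Theorem~\ref{inject} with the rank-$3$ elementary abelian subgroup (or a Dedekind $A$-regular subgroup of maximal rank containing it) playing the role of $N_p$, so that $r_p\geq 3$. This produces an abelian group $H$ of the form
\[
H \;=\; C_{p^{\alpha_p-(r_p-1)}}\times (C_p)^{r_p-1} \;\times\; \prod_{i\neq p}\bigl(C_{p_i^{\alpha_i-(r_i-1)}}\times (C_{p_i})^{r_i-1}\bigr)
\]
with $r_p\geq 3$ and $r_i\geq 1$ for $i\neq p$, together with a bijection witnessing $G\in\mathfrak{cl}(H)$; in particular $\psi(G)\leq \psi(H)$.

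The next step is to bound $\psi(H)$ from above. Since $\psi$ is multiplicative on direct products of groups of pairwise coprime orders, it suffices to compare the Sylow factors of $H$ prime by prime with those of $C_{n/p^2}\times (C_p)^2$. An elementary calculation for abelian $p$-groups yields the strict inequality
\[
\psi(C_{q^{a}}\times (C_q)^{b})\;>\;\psi(C_{q^{a-1}}\times (C_q)^{b+1})
\]
for any prime $q$ and integers $a\geq 2$, $b\geq 0$. Thus replacing a cyclic Sylow factor by a non-cyclic one of the same order, or increasing the rank of an already non-cyclic factor, strictly decreases $\psi$. Chaining this with the hypothesis $\psi(G)\geq \psi(C_{n/p^2}\times (C_p)^2)$ and the inequality $\psi(G)\leq \psi(H)$ forces $r_p=3$ exactly, $r_i=1$ for every $i\neq p$ (so every other Sylow subgroup of $G$ is cyclic), and the identification $H=C_{n/p^2}\times (C_p)^2$ together with the equality $\psi(G)=\psi(H)$.

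Finally, once $\psi(G)=\psi(H)$ is established, Lemma~\ref{same} tells us that $G$ and $H$ are of the same order type. Because $H$ is abelian it is $q$-nilpotent for every prime $q$, so Lemma~\ref{pnil} forces $G$ to be $q$-nilpotent for every prime $q$, hence nilpotent. Writing $G=P\times K$ with $K$ the Hall $p'$-subgroup, the same-order-type condition restricted to $K$ gives $|\{x\in K:x^d=1\}|=d$ for every divisor $d$ of $|K|$, and the classical Frobenius criterion then forces $K\cong C_{n/|P|}$. Hence $G\cong C_{n/|P|}\times P$, as required.

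The main technical delicacy will be the middle step: making the strict monotonicity of $\psi$ under increasing rank in abelian $p$-groups precise, and carefully tracking the equality case through the $\psi$-chain to pin down both $r_p=3$ and $r_i=1$ for all $i\neq p$. The opening and closing steps are essentially direct applications of Theorem~\ref{inject}, Lemma~\ref{same}, and Lemma~\ref{pnil}.
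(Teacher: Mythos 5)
Your overall strategy coincides with the paper's: produce an order-divisibility bijection from $G$ onto $C_{n/p^2}\times (C_p)^2$, observe that the hypothesis $\psi(G)\geq \psi(C_{n/p^2}\times (C_p)^2)$ forces that bijection to preserve orders exactly, and then invoke Lemmas \ref{same} and \ref{pnil} to get the same-order-type property, nilpotency, and the structure $C_{n/|P|}\times P$. Where you differ is in how the bijection is obtained. The paper applies Theorem \ref{2va} (for $p=2$) or Theorem \ref{Frob3} (for $p>2$) to the maximal-rank elementary abelian subgroup $U\leq P$ to get $|Sol(1,p^jd,G)|\geq p^{j+2}d$ with $|U|=p^j\geq p^3$ and $d=n_{p'}$, and feeds this cardinality bound into Theorem \ref{inject221}, which targets $C_{n/p^2}\times (C_p)^2$ directly. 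You instead invoke Theorem \ref{inject} to land in a possibly finer abelian group $H$ whose $p$-part has rank $r_p\geq 3$, and then use the strict monotonicity $\psi(C_{q^a}\times (C_q)^b)>\psi(C_{q^{a-1}}\times (C_q)^{b+1})$ (which indeed follows from Lemma \ref{reduce} and Corollary \ref{siss}) to force $H\cong C_{n/p^2}\times (C_p)^2$ and $\psi(G)=\psi(H)$; this extra comparison step is not needed in the paper's version. Your closing step is actually more careful than the paper's: the explicit derivation that the Hall $p'$-part is cyclic, via the Frobenius criterion applied to the same-order-type condition, is only implicit in the paper's one-line conclusion. The cost sits at the opening step: Theorem \ref{inject} requires $N_p$ to be a Dedekind $A$-regular subgroup, and you do not verify that the rank-$3$ elementary abelian subgroup (or a maximal-rank subgroup containing it) satisfies $A$-regularity, which is a genuinely restrictive condition; the paper's detour through $m(G,1,p)$ and Theorems \ref{2va}/\ref{Frob3} in this particular lemma appears designed precisely to avoid having to check it. To make your argument self-contained you should either verify $A$-regularity in this setting or swap Theorem \ref{inject221} in for Theorem \ref{inject} as the source of the bijection, keeping the rest of your argument unchanged.
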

 \begin{proof}
 Let $P\in Syl_p(G)$, and let $n_{p'}=d$.
   Let $U$ be an elementary abelian subgroup of $P$ of maximal rank. Then $|U|=p^j\geq p^3$.
   If $p=2$, then  
  from Theorem \ref{2va},  $|Sol(1,2^jd,G)|\geq 2^{j+2}d$, otherwise by Theorem \ref{Frob3}, $|Sol(1,p^jd,G)|\geq p^{j+2}d$, consequently, according to Theorem   
 \ref{inject221}, 
 $G\in \mathfrak{cl}(C_{n/p^2}\times  (C_p)^{2})$.
 Let $f$ be a bijection from $G$ onto $C_{n/p^2}\times  (C_p)^{2}$ such that $o(u)\mid o(f(u))$ for all $u\in G$.
 If   there exists $u\in G$ such that $o(u)<o(f(u))$, then 
 $\psi(G)<\psi(C_{n/p^2}\times  (C_p)^{2})$, which is a contradiction.
 So $o(x)=o(f(x))$ for any $x\in G$,   and so by Lemma \ref{same} and Lemma \ref{pnil}, $G\cong C_{n/|P|}\times P$ is a nilpotent group.

 \end{proof}
 
 \begin{lem}\label{s123}
    Let $G\in \Psi_3(n)$. If there exist   non-cyclic  subgroups  $P\in Syl_p(G)$ and $Q\in Syl_q(G)$ for $p<q$, then $G\cong C_{n/|Q||P|}\times P\times Q$ is a nilpotent group. 
\end{lem}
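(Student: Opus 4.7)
The plan is to mirror Lemma \ref{solli}: apply Theorem \ref{mmm} (with the quaternion variant of Theorem \ref{fmain} when $p=2$) to produce an order-dividing bijection $f: G \to H$ into an explicit nilpotent target, then exploit $G \in \Psi_3(n)$ via a $\psi$-comparison to force $\psi(G) = \psi(H)$, whence nilpotency and the desired direct product decomposition follow.

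By Theorem \ref{mmm} one obtains $G \in \mathfrak{cl}(H)$, where $H = \prod_{i=1}^k Q_i$ with $Q_i$ cyclic of order $p_i^{\alpha_i}$ whenever the Sylow $p_i$-subgroup of $G$ is cyclic, and $Q_i \cong C_{p_i^{\alpha_i - 1}} \times C_{p_i}$ (or $Q_{2^{\alpha_1}}$ in the quaternion case treated in Theorem \ref{fmain}) otherwise. Since both $P$ and $Q$ are non-cyclic, $H$ has at least two non-cyclic direct factors. By the Remark preceding this lemma, every member of $\Psi_1(n) \cup \Psi_2(n)$ has at most one non-cyclic Sylow subgroup. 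Using multiplicativity of $\psi$ on direct products of coprime-order groups together with the strict inequalities $\psi(C_{p^{\alpha-1}} \times C_p) < \psi(C_{p^\alpha})$ and $\psi(Q_{2^\alpha}) < \psi(C_{2^\alpha})$, two independent demotions in $H$ force $\psi(H)$ strictly below every $\psi$-value attained in $\Psi_2(n)$. Hence $H \notin \Psi_1(n) \cup \Psi_2(n)$, and because $G \in \Psi_3(n)$ we obtain $\psi(G) \geq \psi(H)$. The bijection $f$ gives the reverse inequality $\psi(G) \leq \psi(H)$, so $\psi(G) = \psi(H)$.

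Lemma \ref{same} yields that $G$ and $H$ have the same order type, and Lemma \ref{pnil} applied at every prime gives that $G$ is nilpotent; hence $G$ is the internal direct product of its Sylow subgroups. If $G$ carried a third non-cyclic Sylow subgroup $R \in Syl_r(G)$ with $r \notin \{p, q\}$, then $H$ would contain a third demoted factor, pushing $\psi(H) = \psi(G)$ strictly below $\psi(C_{n/p_0^2} \times (C_{p_0})^2)$, where $p_0$ is the smallest prime with $p_0^2 \mid n$ and non-cyclic Sylow; the group $C_{n/p_0^2} \times (C_{p_0})^2$ is non-cyclic of order $n$ and lies outside $\Psi_1(n) \cup \Psi_2(n)$, contradicting $G \in \Psi_3(n)$. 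Thus only $P$ and $Q$ are non-cyclic in $G$. For each $r \notin \{p, q\}$, the same-order-type identity $|Sol(1, r^{\alpha_r - 1}, G)| = |Sol(1, r^{\alpha_r - 1}, H)| = r^{\alpha_r - 1}$ rules out a non-cyclic Sylow $r$-subgroup of $G$ (which would have exponent at most $r^{\alpha_r - 1}$ and so contribute $r^{\alpha_r}$ elements to the left-hand count). Therefore $G \cong C_{n/|P||Q|} \times P \times Q$, as claimed.

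The main obstacle is the $\psi$-comparison: proving rigorously that two simultaneous demotions strictly undercut every $\Psi_2(n)$ value, and that a third demotion forces $\psi(G)$ below the specific $\Psi_3(n)$ candidate $C_{n/p_0^2} \times (C_{p_0})^2$. This is an explicit but delicate numerical verification, hinging on the fact that each demotion $C_{p^\alpha} \mapsto C_{p^{\alpha-1}} \times C_p$ (or $\mapsto Q_{2^\alpha}$) multiplies the corresponding $\psi$-factor by a constant strictly less than one, together with a careful prime-by-prime accounting in the presence of both the quaternion and all-Sylow-cyclic members of $\Psi_2(n)$ described in the Remark.
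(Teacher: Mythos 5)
Your proposal follows essentially the same route as the paper's proof: apply Theorem \ref{mmm} (with the generalized quaternion case isolated via Lemmas \ref{dis} and \ref{dec}) to obtain an order-dividing bijection from $G$ onto a nilpotent target having two non-cyclic coprime factors, note that this target lies in $Gr(n)\setminus(\Psi_1(n)\cup\Psi_2(n))$ so that $G\in\Psi_3(n)$ forces $\psi(G)$ to equal $\psi$ of the target, and then conclude via Lemmas \ref{same} and \ref{pnil}. Your extra verification that the remaining Sylow subgroups are cyclic, using the same-order-type counts $|Sol(1,r^{\alpha_r-1},G)|=r^{\alpha_r-1}$, makes explicit a step the paper leaves implicit, but the argument is the same in substance.
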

\begin{proof}
     Let $n=p_1^{\alpha_1}\ldots p_k^{\alpha_k}$ where $p_1<\ldots  <p_k$ are prime numbers.
If $|Sol(1,pn_{p'},G)|=2n_{p'}$, then  $p_1=p=2$.
By Lemmas   \ref{dis} and \ref{dec},
$G=K\rtimes Q_{2^{\alpha_1}}$.
Since $Q\in Syl_q(K)$ by Theorem \ref{mmm},  
  $G\in \mathfrak{cl}(C_{n/2^{\alpha_1}|Q|}\times Q_{2^{\alpha_1}} \times C_{|Q|/q}\times C_q)$.
It follows that $\psi(G)\geq \psi(C_{n/2^{\alpha_1}|Q|}\times Q_{2^{\alpha_1}} \times C_{|Q|/q}\times C_q)$, and so by Lemma \ref{same}, $G$ and $C_{n/2^{\alpha_1}|Q|}\times Q_{2^{\alpha_1}} \times C_{|Q|/q}\times C_q$ are the same order type, so $G\cong C_{n/2^{\alpha_1}|Q|}\times Q_{2^{\alpha_1}} \times C_{|Q|/q}\times C_q$ is a nilpotent group.

So $|Sol(1,pd,G)|\geq 4d$.
Then by Theorem \ref{mmm},
  $G\in \mathfrak{cl}(C_{n/|Q||P|}\times C_{|Q|/q}\times C_q\times C_{|P|/p}\times C_p)$.
Since $\psi(G)\geq \psi(C_{n/|Q||P|}\times C_{|Q|/q}\times C_q\times C_{|P|/p}\times C_p)$, from Lemma \ref{same}, $G$ and $C_{n/|Q||P|}\times C_{|Q|/q}\times C_q\times C_{|P|/p}\times C_p$ are the same order type, so $G$ is a nilpotent group, and hence  $G\cong C_{n/|Q||P|}\times P\times Q$. 
\end{proof}
In the following lemma  we show that all elements of $\Psi_3(n)$ are solvable groups. 
\begin{lem}\label{halp}
    Let $G\in \Psi_3(n)$. Then $G$ is a solvable group. 
\end{lem}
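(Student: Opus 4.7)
We proceed by contradiction, assuming $G\in\Psi_3(n)$ is non-solvable. Combining Lemmas~\ref{solli} and \ref{s123}, neither can any Sylow subgroup of $G$ contain an elementary abelian subgroup of order $p^3$ nor can two distinct Sylow subgroups of $G$ be simultaneously non-cyclic, since in either case $G$ would be nilpotent and hence solvable. If every Sylow subgroup of $G$ is cyclic, then $G$ is a Z-group; by the classical H\"older--Burnside--Zassenhaus theorem, $G$ is metacyclic, in particular solvable, contradicting our assumption. So $G$ has a unique non-cyclic Sylow $p$-subgroup $P$ of $p$-rank at most $2$, and every other Sylow subgroup of $G$ is cyclic.

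By the Feit--Thompson theorem (which is independent of the classification of finite simple groups), a non-solvable group has even order. If $p$ is odd, then a Sylow $2$-subgroup of $G$ is cyclic; Burnside's normal $p$-complement theorem furnishes a normal $2$-complement $K\lhd G$, which is of odd order and hence solvable by Feit--Thompson. Therefore $G=K\rtimes C_{2^a}$ is solvable, contradiction. Thus $p=2$, and $P$ is either generalized quaternion or has $2$-rank exactly $2$.

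The remaining step is to eliminate both possibilities for $P$. In the generalized quaternion case, Theorem~\ref{mmm} gives $G\in\mathfrak{cl}(C_{n_{2'}}\times Q_{2^{\alpha_1}})$; invoking the Brauer--Suzuki theorem (which also predates and is independent of the classification), the quotient $G/O(G)$ contains a central involution, and an induction on $|G|$ (the relevant quotient still satisfies our Sylow hypotheses) collapses $G$ to a solvable group, contradiction. If instead $P$ is of $2$-rank $2$ but not generalized quaternion, Theorem~\ref{mmm} gives $G\in\mathfrak{cl}(C_{n/2}\times C_2)$, so $\psi(G)\leq\psi(C_{n/2}\times C_2)$; by Lemmas~\ref{same} and \ref{pnil} any equality would force $G\cong C_{n/2}\times C_2$, which is abelian, contradiction. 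So $\psi(G)<\psi(C_{n/2}\times C_2)$ strictly.

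The main obstacle is the last case: to contradict $G\in\Psi_3(n)$ we must exhibit an explicit solvable non-cyclic group $H$ of order $n$ lying outside $\Psi_1(n)\cup\Psi_2(n)$ with $\psi(H)>\psi(G)$. A natural candidate is $H=C_{n/|P|}\times L$ for a suitable rank-$2$ solvable $2$-group $L$ of order $|P|$, whose $\psi$-value is computable from the direct product factorization. The delicate point is quantifying the defect $\psi(C_{n/2}\times C_2)-\psi(G)$: a non-abelian composition factor of $G$ contributes many elements $x$ for which the bijection supplied by Theorem~\ref{mmm} satisfies $o(x)<o(f(x))$, and one bounds this contribution from below using the quantitative Frobenius-type counts in Theorems~\ref{divv22}, \ref{divv2}, and \ref{2va}, which force $|Sol(1,d,G)|$ to be comparatively large only when $G$ is near nilpotent. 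The accumulated defect will then be seen to exceed $\psi(C_{n/2}\times C_2)-\psi(H)$, the desired contradiction.
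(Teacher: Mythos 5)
Your reductions at the start are fine and essentially match the paper's: eliminating rank-$3$ elementary abelian subgroups via Lemma~\ref{solli}, two non-cyclic Sylow subgroups via Lemma~\ref{s123}, the all-cyclic case, and the case of odd $p$ (the paper reaches the same point by combining Theorem~\ref{mmm} with Lemmas~\ref{same} and~\ref{pnil} rather than by Feit--Thompson, but either way one is left with a non-cyclic Sylow $2$-subgroup of $2$-rank at most $2$). The problems are in the two cases that actually carry the difficulty.

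First, the generalized quaternion case. Brauer--Suzuki does \emph{not} ``collapse $G$ to a solvable group'': $SL(2,5)$ is non-solvable with generalized quaternion Sylow $2$-subgroups, so no argument of the shape ``quaternion Sylow $2$ $\Rightarrow$ central involution mod $O(G)$ $\Rightarrow$ solvable'' can be correct. Moreover your induction is not available as stated, because the induction hypothesis is about groups in $\Psi_3$ of their own order, and neither $O(G)$ nor $G/O(G)\langle z\rangle$ need lie in $\Psi_3$ of its order. The paper avoids this case differently: Lemma~\ref{dec} shows that $|Sol(1,2n_{2'},G)|=2n_{2'}$ forces $G=K\rtimes P$ with $K$ a $2'$-Hall subgroup, and otherwise $|Sol(1,2n_{2'},G)|\geq 4n_{2'}$, which feeds into the counting machinery.

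Second, and decisively, your final case is a plan rather than a proof: the sentence ``the accumulated defect will then be seen to exceed $\psi(C_{n/2}\times C_2)-\psi(H)$'' is exactly the step that needs to be established, and it is not at all routine. The paper closes this gap with two inputs you do not have: (a) Theorem~1.2 of the earlier paper on $\Psi_2$, which, once one knows $\psi(G)<\psi(C_{n/2}\times C_2)$ and hence $C_{n/2}\times C_2\in\Psi_2(n)$, forces $n_2=8$ and $n_3=3$, so that $P\cong D_8$; and (b) the Gorenstein--Walter classification of groups with dihedral Sylow $2$-subgroups, which identifies a normal section isomorphic to $PSL_2(q)$ or $PGL_2(q)$ and thereby bounds every element order by $n/3$, giving $\psi(G)<\psi(C_{n/3}\times C_3)$ and the contradiction that two distinct non-cyclic groups beat $G$. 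Without pinning down $n_2$ and $n_3$ and without structural control of the non-solvable section, a generic ``defect'' estimate against $C_{n/|P|}\times L$ has no visible route to completion. As written, the proposal does not prove the lemma.
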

\begin{proof}
    We proceed by induction on $|G|$.
    By the Burnside $p$-complement theorem, we may assume that $4\mid n$.
    Let $n=p_1^{\alpha_1}\ldots p_k^{\alpha_k}$ where $p_1:=2<p_2<\ldots<p_k$ are prime numbers.
From   Lemma \ref{dec}, $|Sol(1,2n_{2'},G)|>2n_{2'}$.
So $|Sol(1,2n_{2'},G)|\geq 4n_{2'}$.
If $G$ has a non-cyclic Sylow $p$-subgroup $P$ for $p>2$, then by Theorem \ref{mmm},
  $G\in \mathfrak{cl}(C_{n/2|P|}\times C_2\times C_{|P|/p}\times C_p)$.
Since $\psi(G)\geq \psi(C_{n/2|P|}\times C_2\times C_{|P|/p}\times C_p)$, from Lemma \ref{same}, $G$ and $C_{n/2|P|}\times C_2\times C_{|P|/p}\times C_p$ are the same order type, so $G$ is a nilpotent group.
So suppose that all Sylow $p$-subgroups   for $p>2$ are cyclic. 
Let $P\in Syl_2(G)$.
If $P$ has an elementary abelian $2$-group of rank 3, then from  Lemma \ref{solli}, $G$ is a nilpotent group.
So suppose that $P$ does not contain  elementary abelian subgroups of rank 3. Referring to Theorem 4.10 in \cite{Gor},  we deduce that $P$ is either a dihedral or semi-dihedral group with $\alpha_1 \geq 4$ or generalized quaternion. 
By Theorem \ref{inject221},
$G\in \mathfrak{cl}(C_{n/2}\times C_2)$. If $3\nmid [N_G(H):C_G(H)]$ for all subgroups $H$ of $P$, then by the Frobenius normal $p$-complement theorem,   $G$ is a $2$-nilpotent group, so $G$ is a solvable group.
So suppose that $3\mid [N_G(H):C_G(H)]$ for some subgroup $H$ of $P$, then $p_2=3$.
If $\psi(G)=\psi(C_{n/2}\times C_2)$, then $G$ and $C_{n/2}\times C_2$ are the same order type and so $G$ is a solvable group.
So $\psi(G)<\psi(C_{n/2}\times C_2)$, consequently, 
$C_{n/2}\times C_2\in \Psi_2(n)$, as $G\in \Psi_3(n)$.
Form Theorem 1.2 of \cite{mohsen234},
if $3^{\alpha_2}>3$ or $2^{\alpha_1}>8$, then $C_{n/2}\times C_2\not\in \Psi_2(n)$, which is a contradiction.
So 
$3^{\alpha_2}=3$ and  $2^{\alpha_1}=8$.
It follows that $P_1\cong D_8$, as $H$ has a subgroup of type $C_2\times C_2$.
From Theorem 1 of \cite{Gorr}, 
 $\frac{G}{O_2(G)}$ is either a $2$-group, or has a normal subgroup
$N$ of odd index isomorphic to $PGL_2(q)$ or
$PSL_2(q)$ for some power $q$ of an odd prime, as Sylow $3$-subgroup of $A_7$ is not cyclic.
If $O_2(G)\neq 1$, then $|\frac{P_1}{O_2(G)}|\leq 2$, as $3\mid [N_G(H):C_G(H)]$, so by the Buernside $p$-complement theorem $G$ is a solvable group.
So $O_2(G)=1$. 
Clearly, we may assume that $G$ is not a $2$-group.
Let $m:=(q+1)\cdot gcd(|Z(N)|,2)\cdot [G:N]$.
Then for any $x\in G$, we have 
$o(x)\leq m$.
If $q\leq 3$, then $G$ is a solvable group. So $q-1\geq 3$, and then $m\leq n/3$.
Hence, 
$$\psi(G)<mn\leq  3\varphi(n/3)(n/3)\leq |B_{n/3}(C_{n/3}\times C_3)| \leq \psi(C_{n/3}\times C_3).$$
It follows that $C_{n/3}\times C_3\in \Psi_2(G)$,  which is a contradiction.

\end{proof}
The following two lemmas aid  in proving Theorem \ref{fmain}.
 
  \begin{lem}\label{allcyc}
Let $G$ be a finite non-cyclic group of order n such that all Sylow subgroups
of $G$ are cyclic. Let $P=\langle u\rangle \in  Sylp(F it(G))$ and $Q=\langle v\rangle\in  Sylq(G)$ such that $QP$ is not cyclic. Then
$G\in \mathfrak{cl}(C_{\frac{n}{|QP|}} \times QP)$.

       \end{lem}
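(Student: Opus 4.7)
The plan is to induct on $|G|$, exploiting that all Sylow subgroups being cyclic makes $G$ a Z-group: solvable with the decomposition $G = A \rtimes B$ where $A = F(G)$ is cyclic, $B$ is cyclic, and $\gcd(|A|,|B|) = 1$. The hypothesis $P \le F(G)$ places $P \le A$, and the non-cyclicity of $QP$ combined with the cyclicity of $A$ forces (up to conjugation) $Q \le B$ with $Q$ acting nontrivially on $P$. If $n = |QP|$ the statement is vacuous, so I may assume there is a prime $r \ne p, q$ dividing $n$, with cyclic Sylow $r$-subgroup $R$ (either $R \le A$, making $R$ normal in $G$, or $R$ conjugate into $B$).

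In the ``normal'' subcase $R \lhd G$, I would pass to $G/R$: this is again a Z-group of smaller order, and the images $\overline{P}, \overline{Q}$ still satisfy $\overline{P} \le F(G/R)$ and $\overline{Q}\,\overline{P}$ non-cyclic (the action of $\overline{Q}$ on $\overline{P}$ is inherited unchanged). By the induction hypothesis there is a bijection $G/R \to C_{(n/|R|)/|QP|} \times QP$ preserving order-divisibility, which I would lift to $G$ using a cyclic transversal for $R$, exploiting the identification $C_{n/|QP|} \cong R \times C_{n/(|R|\,|QP|)}$ (valid since these orders are coprime). In the ``non-normal'' subcase $R \le B$, I would instead apply Theorem \ref{inject221} with the Dedekind $A$-regular subgroups $N_p = P$ and $N_q = Q$ (both cyclic) to obtain an initial bijection into $C_{n/|PQ|} \times P \times Q$, then repackage the $P \times Q$ factor as $Q \ltimes P = QP$ via the transversal construction of Theorem~2.9 of \cite{mohsen2}, which produces exactly the coset decomposition matching $QP$'s cyclic subgroups.

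The main obstacle will be the repackaging step: elements of $QP$ do not all have order equal to the product of their $p$- and $q$-parts (the nontrivial action of $Q$ on $P$ forces some mixed-order elements to have reduced orders), whereas the product $P \times Q$ has orders $o(p)\,o(q)$. I would resolve this by using that the $Q$-action on $P$ inside $G$ coincides with the $Q$-action on $P$ inside $QP$, so the very same order-reducing phenomena occur in $G$; more precisely, for each $g \in G$ with $\{p,q\}$-part $g_{\{p,q\}}$, one has $g_{\{p,q\}}$ conjugate into $QP$ with the same order, and $g$'s Hall $\{p,q\}'$-part $g_{\{p,q\}'}$ has order dividing $n/|QP|$. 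Matching $g \mapsto (c,h)$ with $h \in QP$ of order $o(g_{\{p,q\}})$ and $c \in C_{n/|QP|}$ of order a multiple of $o(g_{\{p,q\}'})$ (available by induction), and verifying that the counts of such elements agree on both sides, completes the proof.
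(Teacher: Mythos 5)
Your proposal diverges substantially from the paper's argument, and it has a genuine gap at its crux. The paper's proof is a single explicit bijection: since all Sylow subgroups are cyclic, $G=(\langle a\rangle\times P)\rtimes(\langle b\rangle\times Q)$ with $\mathrm{Fit}(G)=\langle a\rangle\times P$ and $\gcd(o(a)o(u),o(b)o(v))=1$, so every element is uniquely $a^iu^jb^rv^s$; one identifies $C_{n/|QP|}$ with $\langle a\rangle\times\langle b\rangle$ and sets $f(a^iu^jb^rv^s)=(a^ib^r,\,u^jv^s)$, checking order-divisibility by induction. The essential point is that the $QP$-coordinate $u^jv^s$ is computed inside the honest (twisted) subgroup $QP\leq G$, so the order-reducing effect of the nontrivial $Q$-action on $P$ occurs identically on both sides of the map.

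Your proposal never produces such a map. In your ``non-normal'' subcase you first pass to $C_{n/|PQ|}\times P\times Q$, which (as $P$ and $Q$ are cyclic of coprime order) is just $C_n$, and then propose to ``repackage'' into $C_{n/|QP|}\times QP$. That repackaging is exactly the content of the lemma: every bijection from $C_n$ onto $C_{n/|QP|}\times QP$ strictly decreases some orders, so nothing survives the composition, and your proposed fix --- match $g$ to $(c,h)$ with $o(h)=o(g_{\{p,q\}})$ and ``verify that the counts agree'' --- is not carried out and is dubious as stated. Requiring equality $o(h)=o(g_{\{p,q\}})$ is an over-specification (the multiset of orders of $\{p,q\}$-parts of elements of $G$ need not equal $n/|QP|$ copies of the order multiset of $QP$ once the extension is twisted elsewhere), while the weaker divisibility matching that you actually need is precisely the assertion being proved, so the argument is circular at this point. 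The ``normal'' subcase (inducting modulo a normal Sylow $r$-subgroup and lifting along cosets of $R$) is workable but unnecessary, since the global Z-group factorization handles all primes simultaneously. To repair the proof, define the bijection directly from $G=(\langle a\rangle\times P)\rtimes(\langle b\rangle\times Q)$ as above rather than routing through the cyclic group.
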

   \begin{proof}
There exist elements $a$ and $b$ in $G$ such that $G = (\langle a\rangle \times P) \rtimes  (\langle b\rangle \times  Q)$ where
$Fit(G)=\langle a\rangle \times P$. We can assume that $C_{\frac{n}{|QP|}}= \langle a\rangle \times   \langle b\rangle$. Define $f$ from $G$ onto $C_{\frac{n}{|QP|}}\times QP$ by 
$f(a^iu^jb^rv^s)=a^ib^ru^jv^s$ for all integers $i,j,r$ and $s$. By a straightforward induction on $|G|$, we can establish that $o(a^iu^jb^rv^s)\mid o(f(a^iu^jb^rv^s)).$

   \end{proof}

\begin{lem}\label{2cm}
    Let $G$ be a finite solvable group of order $n$. If 
    $exp(P)=|P|/2$ and $3\nmid n$, then $G$ is a $2$-nilpotent group. Also, if $3\mid n$, then 
    $G=T\rtimes (QP)$ where $Q\in Syl_3(G)$ and $T$ is a Hall complement for $QP\leq G$.
    
\end{lem}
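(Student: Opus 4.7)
The central structural observation I would use is: the hypothesis $exp(P)=|P|/2$ forces $P$ to contain a cyclic subgroup of index at most $2$, placing $P$ in the short list $\{C_{2^n},D_{2^n},Q_{2^n},SD_{2^n},M_{2^n},C_{2^{n-1}}\times C_2\}$. Every subgroup $H\leq P$ is again on this list (up to isomorphism) or trivial, and a direct inspection of automorphism groups yields that $|Aut(H)|$ is always a $\{2,3\}$-number (the only odd prime that can intervene comes from $Aut(V_4)=S_3$ and $Aut(Q_8)=S_4$).

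For Part (1), I would apply Frobenius' normal $p$-complement theorem with $p=2$. For every $H\leq P$, the embedding $N_G(H)/C_G(H)\hookrightarrow Aut(H)$ combined with $3\nmid |G|$ forces $|N_G(H)/C_G(H)|$ to divide both a $\{2,3\}$-number and $|G|$, hence to be a $2$-power. Frobenius then delivers the normal $2$-complement of $G$.

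For Part (2), I would first choose a Hall $\{3\}'$-subgroup $H$ of $G$ (which exists by solvability). Since $P\leq H$ and $3\nmid |H|$, Part (1) applied to $H$ yields $H=T\rtimes P$ with $T\triangleleft H$ the normal $2$-complement of $H$, and $T$ is a Hall $\{2,3\}'$-subgroup of $G$. What remains is to show $T\triangleleft G$; once this is established, Schur--Zassenhaus supplies $G=T\rtimes(QP)$. I would proceed by induction on $|G|$: if $O_{\{2,3\}'}(G)\neq 1$, pass to the quotient by a minimal normal $\{2,3\}'$-subgroup (the hypothesis is preserved, since $P$ is unchanged up to isomorphism by coprimality), apply the inductive conclusion, and lift it back via Schur--Zassenhaus.

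This reduces the problem to the case $O_{\{2,3\}'}(G)=1$, where solvability forces $F(G)=O_2(G)\times O_3(G)$ and $C_G(F(G))\leq F(G)$, so that $G/Z(F(G))$ embeds into $Aut(O_2(G))\times Aut(O_3(G))$. Since $O_2(G)\triangleleft P$, the structural observation makes $|Aut(O_2(G))|$ a $\{2,3\}$-number. The main obstacle is excluding any prime $r>3$ from $|G|$ in this reduced setting: such an $r$ would force a faithful embedding of a Sylow $r$-subgroup $R$ of $G$ into $Aut(O_3(G))$; choosing a compatible Sylow basis so that $PR$ is a subgroup of $G$ and applying Part (1) to the Hall $\{2,r\}$-subgroup $PR$ yields $R\triangleleft PR$. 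The delicate step is then a Frattini-style argument assembling these normal Sylow $r$-subgroups, across all primes $r>3$ dividing $|G|$, into a non-trivial normal $\{2,3\}'$-subgroup of $G$, contradicting $O_{\{2,3\}'}(G)=1$. Once that contradiction is reached, $|G|$ is a $\{2,3\}$-number in the reduced case, $T=1$ is trivially normal, and the semidirect decomposition $G=T\rtimes(QP)$ holds in general.
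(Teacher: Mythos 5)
Your proof of the first assertion is correct, and it is in fact more direct than the paper's: from $exp(P)=|P|/2$ every subgroup $H\leq P$ contains a cyclic subgroup of index at most $2$, the classification of such $2$-groups shows $|Aut(H)|$ is always a $\{2,3\}$-number, and Frobenius' normal $p$-complement theorem finishes at once when $3\nmid n$ (solvability is not even needed for this half). The paper instead runs an induction on $|G|$ through the Fitting subgroup and only meets $Aut(C_2\times C_2)\cong PSL(2,2)$ and Frobenius at the bottom of that induction; your route is cleaner. For the second assertion the paper again reduces to the case $Fit(G)=N\cong C_2\times C_2$ and decomposes $C_G(N)$ by Burnside, which is a genuinely different (and also rather sketchy) path from yours.

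However, the step you yourself flag as delicate in Part (2) is a real gap, and it cannot be closed as the lemma is literally stated. Knowing $R\lhd PR$ for each prime $r>3$ gives no control over the action of $R$ on $O_3(G)$, so $\langle R^G\rangle$ need not be a $\{2,3\}'$-group: $[R,O_3(G)]$ can be nontrivial. Concretely, let $G=((C_3)^4\rtimes C_5)\times (C_4\times C_2)$ with $C_5$ acting faithfully on $(C_3)^4$ (possible since $5\mid 3^4-1$). Then $G$ is solvable, $P\cong C_4\times C_2$ satisfies $exp(P)=|P|/2$, and $3\mid n$, yet $O_5(G)=1$, so $G$ has no normal Hall $\{2,3\}'$-subgroup and the conclusion $G=T\rtimes (QP)$ fails. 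This example sits exactly in your reduced configuration: $O_{\{2,3\}'}(G)=1$, $F(G)=O_2(G)\times O_3(G)$, $R$ faithful on $O_3(G)$, and $R\lhd PR$ all hold, so no contradiction is available. The underlying point is that $exp(P)=|P|/2$ only constrains which primes can act on $2$-subgroups; it says nothing about primes $r>3$ acting on $O_3(G)$. Your argument does go through verbatim if one adds the hypothesis that the Sylow $3$-subgroup is cyclic (the situation in which the paper actually invokes this lemma), since then $Aut(O_3(G))$ is also a $\{2,3\}$-group and no prime $r>3$ survives the reduction to $O_{\{2,3\}'}(G)=1$; without some such hypothesis the second assertion is simply false, and neither your argument nor any other can establish it.
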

\begin{proof}
    We proceed by induction on $n$.  
    First suppose that $Fit(G)\nleq P$.
    Let $N$ be a normal minimal subgroup of $G$ such that $2\nmid |N|$, and let $K$ be a $2'$-Hall subgroup of $G$. First suppose that $3\nmid n$. By the induction hypothesis,  $\frac{G}{N}=\frac{K}{N}\rtimes \frac{PN}{N}$, and so $G=K\rtimes N$. 
    If $N\leq K$  then $G=K\rtimes P$.  

 So suppose that $Fit(G)\leq P$. Let $N\leq P$ be a normal minimal subgroup of $G$.
First suppose that $|N|=2$.
 By the induction hypothesis,  if $3\nmid n$, then  $\frac{G}{N}=\frac{KN}{N}\rtimes \frac{P}{N}$.
By the   Burnside $p$-complement $KN=K\rtimes N$, so $K\lhd G$.
    So $N\cong C_2\times C_2$.
Since $N$ is the unique normal minimal subgroup of $G$ such that $N\nleq \varPhi(G)$, we have $G=N\rtimes M$ where $M$ is a maximal subgroup of $G$.
It follows that $P=N=Fit(G)$.
We have  $Aut(N)=PSL(2,2)$.
Since  $3\nmid n$, then  $[N_G(H):C_G(H)]$ is a $2$-group for all subgroups $H$ of $N$. By the  Frobenius $p$-complement theorem $K\lhd G$.

Now, suppose $3 \mid n$. By a similar argument as in the first case, we assume that $N = Fit(G)$. Since $Aut(N) = PSL(2,2) \cong S_3$, it follows that $[N_G(H) : C_G(H)] \mid 6$. Let $U$ be a Hall $\{2,3\}$-subgroup of $C_G(N)$. Given that all Sylow subgroups of $\frac{C_G(N)}{N}$ are cyclic, Burnside’s $p$-complement theorem implies that $C_G(N) = T \rtimes U$.
Since $C_G(N) \lhd G$, we conclude that $G = T \rtimes (QP)$.

\end{proof}

\begin{thm}\label{pp2}
  Let $G$ belong to $\Psi_3(n)$. Assume $p$ denotes the smallest prime divisor of $G$ such that its Sylow $p$-subgroup $P$ is non-cyclic. Consequently, $G$ decomposes as $G=K\rtimes P$, where $K$ is a Hall $p'$-subgroup of $G$. Additionally, if all Sylow subgroups of $K$ are cyclic, then $K$ is cyclic.
\end{thm}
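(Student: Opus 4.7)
The plan is a three-stage argument: first reduce to the case where $G$ is not nilpotent and has a unique non-cyclic Sylow subgroup, then deduce $p$-nilpotency via Theorem \ref{mmm} and a $\psi$-comparison argument built on the definition of $\Psi_3(n)$, and finally extract the cyclicity of $K$ through a similar bijective contradiction using Lemma \ref{allcyc}.

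By Lemma \ref{halp}, $G$ is solvable. If $G$ admits a non-cyclic Sylow $q$-subgroup for some $q\neq p$, then Lemma \ref{s123} forces $G$ to be nilpotent, so $G=P\times K$ where $K=\prod_{r\neq p}R_r$ is the Hall $p'$-subgroup; in that situation, if additionally every Sylow subgroup of $K$ is cyclic, then $K$ is a direct product of cyclic groups of pairwise coprime orders and so is cyclic. Hence I may assume $G$ is not nilpotent, so every Sylow $q$-subgroup with $q\neq p$ is automatically cyclic. If $p=2$, $P\cong Q_{2^{\alpha_1}}$, and $|Sol(1,2n_{2'},G)|=2n_{2'}$, then Lemma \ref{dec} directly gives $G=K\rtimes P$. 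Otherwise Theorem \ref{mmm} supplies a bijection $f\colon G\to H$ with $H\cong C_{n/p}\times C_p$, yielding $\psi(G)\leq\psi(H)$.

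I then want to argue $\psi(G)=\psi(H)$. Supposing $\psi(G)<\psi(H)$, I would build an auxiliary $H'\in Gr(n)$ with two non-cyclic Sylow subgroups of distinct primes (for instance by applying Theorem \ref{mmm} or the $\mathfrak{cl}$-machinery at a secondary prime) satisfying $\psi(H')>\psi(G)$; the Remark following Lemma \ref{pnil} classifies $\Psi_1(n)\cup\Psi_2(n)$ and implies no such $H'$ can lie there, contradicting $G\in\Psi_3(n)$. Granted the equality, Lemmas \ref{same} and \ref{pnil} combined with the obvious $p$-nilpotency of $H$ force $G=K\rtimes P$. Finally, if $K$ were non-cyclic with all cyclic Sylow subgroups, Lemma \ref{allcyc} would give $K\in\mathfrak{cl}(C_{|K|/|Q_0P_0|}\times Q_0P_0)$ for a non-cyclic Hall factor $Q_0P_0$, and splicing with the $P$-bijection from Theorem \ref{mmm} produces $G\in\mathfrak{cl}(H^\ast)$ with $\psi(H^\ast)>\psi(G)$; the same $\Psi_3$-comparison then yields a contradiction, so $K$ is cyclic.

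The main obstacle is producing the auxiliary witness $H'$ with two non-cyclic Sylow subgroups in the second paragraph: doing so requires exploiting the full classification of $\Psi_1(n)\cup\Psi_2(n)$ from the Remark after Lemma \ref{pnil} and a careful secondary application of the bijective machinery, which is where the technical heart of the argument lies.
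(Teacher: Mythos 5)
There is a genuine gap at the heart of your argument, namely the step where you deduce the decomposition $G=K\rtimes P$ in the non-split case. Your plan is to show $\psi(G)=\psi(C_{n/p}\times C_p)$ and then invoke Lemmas \ref{same} and \ref{pnil}. But for $G\in\Psi_3(n)$ this equality is generally false: by the classification recalled in the Remark after Lemma \ref{pnil}, the group $C_{n/p}\times C_p$ (for the relevant $p$) typically lies in $\Psi_2(n)$, so a genuine third-place group satisfies $\psi(G)<\psi(C_{n/p}\times C_p)$ strictly; if the equality held, $G$ would be of the same order type as $C_{n/p}\times C_p$ and would land in $\Psi_2(n)$, not $\Psi_3(n)$. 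Your proposed escape --- constructing an auxiliary $H'\in Gr(n)\setminus(\Psi_1(n)\cup\Psi_2(n))$ with \emph{two} non-cyclic Sylow subgroups and $\psi(H')>\psi(G)$ --- is not carried out and is implausible as stated: adding a second non-cyclic Sylow subgroup pushes $\psi$ \emph{down} (this is exactly the content of Theorem \ref{mmm} combined with Lemma \ref{dir}), so there is no reason such an $H'$ should dominate $\psi(G)$. You have correctly identified this as ``the technical heart of the argument,'' but that is precisely the part that cannot be waved through.

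The paper's route is entirely different at this point and shows how much work the gap conceals. After the same reductions via Lemmas \ref{halp}, \ref{s123} and \ref{solli}, it assumes $G\neq K\rtimes P$ and derives a contradiction by induction on $n$: for $p=2$ it pins down $|Sol(1,2n_{2'},G)|=4n_{2'}$, concludes via Theorem \ref{22va} that $P$ is of maximal class, applies Lemma \ref{2cm} to write $G=T\rtimes(QP)$ with $Q\in Syl_3(G)$, kills $[T,QP]$ using Lemma \ref{allcyc}, and then eliminates $G=QP$ through an explicit automorphism construction, the known classification of $\Psi_2(n)$ from Theorem 1.1 of the Shen--Chen--Wu paper and Theorem 1.2 of the second-maximum paper, a numerical estimate $\psi(G)<n^2/6$ against $\psi(C_{|P|/2}\times S_3)$, and a GAP check for $|G|\leq 48$; the case $p>2$ reduces to the smallest prime via Burnside and then mimics this. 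None of this machinery appears in your proposal, and the comparison groups the paper uses as witnesses (e.g.\ $T\times QP$, $C_{n/3^{m_2}\cdot 2}\times(Q\rtimes\langle\sigma\rangle)$, $C_{|P|/2}\times S_3$) have a \emph{single} problematic Sylow subgroup each --- not two --- which is what makes their $\psi$ values large enough to beat $\psi(G)$. Your final step (cyclicity of $K$) is closer in spirit to the paper's short argument comparing $\psi(G)\leq\psi(K)\psi(P)<\psi(C_{|P|}\times K)$, but it still rests on the unproven claim that the spliced group $H^{\ast}$ avoids $\Psi_1(n)\cup\Psi_2(n)$, which again needs the $\Psi_2$ classification to be made precise.
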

\begin{proof}
We proceed by induction on $n$. By Lemma \ref{s123}, we can assume that for any prime divisor $q \neq p$ of $n$, the Sylow $q$-subgroups are cyclic. Furthermore, by Lemma \ref{halp}, $G$ is a solvable group.
By Lemma \ref{solli}, we may assume that $P$ does not contain any   abelian group of rank $3$.
First   suppose  that   $G=K\rtimes P$. If    $K=U\rtimes V$ is not a cyclic group, then   $\psi(G)\leq \psi(K)\psi(P)<\psi(C_{|P|}\times K)$ and  $K\not\in \Psi_2(|K|)$,   so $C_{|P|}\times K\in \Psi_3(n)$, which is a contradiction.
    So $K$ is a cyclic group.  So suppose that $G\neq K\rtimes P$.
We consider the following two cases:

{\bf Case 1.}
First suppose that $p=2$. If $|Sol(1,2n_{2'},G)|> 4n_{2'}$, then by Remark \ref{82}, $G\in \mathfrak{cl}(C_{n/8}\times (C_2)^8)$, so $\psi(G)=\psi(C_{n/8}\times (C_2)^8)$, and so $G$ is a nilpotent group.
So $|Sol(1,2n_{2'},G)|\leq 4n_{2'}$. By Lemmas   \ref{dis} and \ref{dec}, and our assumption 
 $|Sol(1,2n_{2'},G)|> 2n_{2'}$. So $|Sol(1,2n_{2'},G)|= 4n_{2'}$.
Then by Theorem \ref{22va}, $P$ is generalized quaternion, dihedral, or semi-dihedral. 
By Lemma \ref{2cm}, we may assume that $3\mid n$ and $G=T\rtimes (QP)$ where    or  $Q=\langle u\rangle\in Syl_3(G)$ of order $3^{m_2}$. Furthermore, if $Q\neq 1$, then  $QP$ is not a nilpotent group.
If $[T,QP]\neq 1$, then by Lemma \ref{allcyc}, $\psi(G)<  \psi(T\times (Q P))$, which is a contradiction.
So $[T,QP]=1$.
Hence, we may assume that $G=QP$.
First suppose that $m_2>1$.
Let $\sigma\in Aut(Q)$ such that $(u^i)^{\sigma}=u^{i+i3^{m_2-1}}$ for all integers $i$.
Then $o([u,\sigma])=3$.
Let $H=C_{\frac{n}{3^m\cdot 2}}\times (Q\rtimes_{\sigma}\langle \sigma\rangle)$.
As, $\psi(G)<\psi(H)$, we have $H\in \Psi_2(G)$.
Since $H$ is not a nilpotent group, by Theorem 1.1 of \cite{15}, we have $m_2 = 1$, which leads to a contradiction. Therefore, $m_2 = 1$. If $m_1 \leq 4$, then $|G|\leq 48$, the result follows from a GAP computation.
 So $m_1\geq 5$.
According to  Theorem  1.1 of \cite{15}, 
$C_{n/2}\times C_2\in \Psi_2(n)$.
Let $Y=C_{|P|/2}\times S_3$. 
As, $3\mid [N_G(N):C_G(N)]$, we have 
  $exp(G)\leq n/6$. Then 
$$\psi(G)<n^2/6\leq \varphi(n/2)n/2+\varphi(n/3)2=|B_{n/2}(Y)|+|B_{n/3}(Y)|<\psi(Y),$$
which is a contradiction.

{\bf Case 2.}
So   $p>2$. 
If there exists a prime divisor $r<p$, then by the Burnside $p$-complement theorem, $G=K_1\rtimes R$ where $R\in Syl_r(G)$.
Clearly, $\psi(G)\leq \psi(K_1)\psi(R)=\psi(K_1\times R)$.
If $K_1$ is a nilpotent group, then we are done.
So $K_1$ is not a nilpotent group. Therefore $\psi(G)<\psi(K_1\times R)$.
Furthermore,  as $P$ is not cyclic, and $K_1$ is not nilpotent,  $K_1\times R\in \Psi_3(n)$, which leads  a contradiction.
So $p$ is the smallest prime divisor of $n$. By the Frobenius $p$-complement theorem, $P$ has a subgroup $H$ such that $\frac{N_G(H)}{C_G(H)}$ is not a $p$-group. 
Let $q|\mid \frac{N_G(H)}{C_G(H)}|$ be a prime number, and let $Q\in Syl_q(G)$.
Using similar arguments to those in Lemma \ref{2cm}, we can show that $G = T \rtimes (QP)$ where $[Q, P] \neq 1$. The remainder of the proof follows the same reasoning as in case 1.
    
\end{proof}
Now, the proof of Theorem \ref{main5}
follows from Lemmas \ref{s123}, \ref{s123} and \ref{halp} and Theorem \ref{pp2}.
Let $f$ be a function from $\mathbb{R}$ to $\mathbb{R}$, and let $P_k(G)$ be the set of all subsets of $G$ of size $k$. Recall that $\psi_{f,k}(G) = \sum_{S \in P_k(G)} \mu(S)$, where $G$ is a finite group and $\mu(S) = \prod_{x \in S} f(o(x))$.    
We need the following two Lemmas to prove Theorems \ref{sec} and \ref{oo}:
\begin{lem}\label{psi}(lemma 2.5, of \cite{mohs})\label{dir}
Let $G=A\times B$. Then $\psi^{I,l}(G)\leq \psi^{I,l}(A)\psi^{I,l}(B)$
 and $\psi^{I,l}(G)=\psi^{I,l}(A)\psi^{I,l}(B)$ if and only if $gcd(|A|,|B|)=1$.
\end{lem}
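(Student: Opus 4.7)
The plan is to reduce the inequality to a pointwise comparison at the level of element orders and then sum over $A\times B$, exploiting the product structure. Interpreting $\psi^{I,l}(G)$ as the sum of $l$-th powers of element orders (the natural reading given Corollary \ref{coo} and the earlier discussion of $\psi_{f,k}$), the central arithmetic ingredient is the identity $\mathrm{lcm}(m,n)\cdot \gcd(m,n)=mn$ for positive integers $m,n$. This yields $\mathrm{lcm}(m,n)\leq mn$ with equality exactly when $\gcd(m,n)=1$, and raising to the $l$-th power preserves both the inequality and its equality condition.

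Applying this pointwise to the product group, where every $(a,b)\in A\times B$ satisfies $o((a,b))=\mathrm{lcm}(o(a),o(b))$, I would write
\[
\psi^{I,l}(A\times B)=\sum_{a\in A}\sum_{b\in B}\mathrm{lcm}(o(a),o(b))^{l}\leq \sum_{a\in A}\sum_{b\in B}o(a)^{l}o(b)^{l}=\psi^{I,l}(A)\,\psi^{I,l}(B),
\]
where the last step separates the variables and factors the double sum. This disposes of the inequality half of the lemma in a single line.

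For the equality characterization, the forward implication is immediate: if $\gcd(|A|,|B|)=1$, then $\gcd(o(a),o(b))=1$ for every pair $(a,b)$, so each pointwise inequality is already an equality and the summed bound is tight. For the converse, suppose a prime $p$ divides both $|A|$ and $|B|$. By Cauchy's theorem there exist $a_{0}\in A$ and $b_{0}\in B$ of order $p$, and for this pair $\mathrm{lcm}(o(a_{0}),o(b_{0}))^{l}=p^{l}<p^{2l}=o(a_{0})^{l}o(b_{0})^{l}$. Since every summand is nonnegative and at least one summand is strictly smaller than its counterpart in the factored product, the global inequality is strict.

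The proof is essentially arithmetic and uses no heavy group theory beyond Cauchy's theorem. The only mild subtlety is pinning down the precise meaning of the notation $\psi^{I,l}$ as it appears in \cite{mohs}; once the definition is unwound to $\sum_{g\in G}o(g)^{l}$, the chain of estimates above proceeds without obstruction, and no induction on $|G|$ or structural analysis of the direct factors is required.
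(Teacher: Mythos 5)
Your proof is correct and complete: the identity $o((a,b))=\operatorname{lcm}(o(a),o(b))$, the bound $\operatorname{lcm}(m,n)\le mn$ with equality iff $\gcd(m,n)=1$, and Cauchy's theorem for the converse direction give exactly the standard argument, and your reading of $\psi^{I,l}(G)=\sum_{g\in G}o(g)^{l}$ is consistent with how the notation is used in Lemma \ref{reduce} and Corollary \ref{siss}. The paper itself gives no proof of this lemma but simply imports it from \cite{mohs}, so your self-contained argument is, if anything, a useful addition rather than a deviation.
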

 
 The preceding lemma does not hold for $\psi_{I,l}$ when $k>1$. For instance, consider $\psi_{I,2}(C_6)=173$, $\psi_{I,2}(C_3)=15$, and $\psi_{I,2}(C_2)=2$. Consequently, extracting information about $G$ using $\psi_{I,l}(G)$ becomes significantly more intricate.

Hereafter, let $I$ denote the identity map from $\mathbb{Z}$ to $\mathbb{Z}$.
 \begin{lem}\label{reduce}
     Let $G$ be a finite $p$-group such that 
     $\Omega_1(G)$ is a subgroup of $G$ of order $p^r$ and  exponent $p$.
     Then $$\psi^{I,l}(G)=1-p^l+p^{r+l}\psi^{I,l}(\frac{P}{N}).$$
 \end{lem}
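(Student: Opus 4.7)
The plan is to write $\psi^{I,l}(G)=\sum_{g\in G}o(g)^l$ and partition the sum according to the cosets of $N:=\Omega_1(G)$. First I would handle the coset $N$ itself: since $|N|=p^r$ and $\exp(N)=p$, it contains the identity together with $p^r-1$ elements of order $p$, contributing exactly $1+(p^r-1)p^l$ to $\psi^{I,l}(G)$.

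The main point, and the step I expect to be the only genuinely non-trivial one, is to show that every element of a non-identity coset $gN$ has order exactly $p\cdot o(gN)$. I would prove this as follows. Set $s\geq 1$ with $o(gN)=p^s$. From $g^{p^s}\in N$ and $\exp(N)=p$ one gets $g^{p^{s+1}}=(g^{p^s})^p=1$, so $o(g)\mid p^{s+1}$. On the other hand $o(gN)\mid o(g)$, so $o(g)\in\{p^s,p^{s+1}\}$. The value $p^s$ is excluded because then $g^{p^{s-1}}$ would be a non-identity element of order $p$ and hence would lie in $\Omega_1(G)=N$, forcing $o(gN)\leq p^{s-1}$, a contradiction. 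Thus $o(g)=p^{s+1}=p\cdot o(gN)$, and the same reasoning applied to any $h\in gN$ (noting $hN=gN$) shows that all $p^r$ elements of the coset have this common order.

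Once this coset-order identity is established, the contribution of each non-identity coset $gN$ to $\psi^{I,l}(G)$ is $p^r\cdot(p\cdot o(gN))^l=p^{r+l}\,o(gN)^l$. Summing over all non-identity cosets gives $p^{r+l}\bigl(\psi^{I,l}(G/N)-1\bigr)$, so that
\[
\psi^{I,l}(G)=1+(p^r-1)p^l+p^{r+l}\bigl(\psi^{I,l}(G/N)-1\bigr)=1-p^l+p^{r+l}\psi^{I,l}(G/N),
\]
after combining the $p^l$ and $p^{r+l}$ terms. This yields the claimed identity. Note no deeper $p$-group structure theorem is needed; the only ingredient beyond basic coset counting is the observation that $\exp(\Omega_1(G))=p$ rigidly controls how the order can jump from a quotient coset to its representatives.
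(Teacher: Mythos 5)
Your proposal is correct and follows essentially the same route as the paper: both decompose $\psi^{I,l}(G)$ over the cosets of $N=\Omega_1(G)$ and rest on the fact that every element outside $N$ has order exactly $p\cdot o(gN)$. If anything, your argument is more complete, since the paper merely asserts that all elements of a nontrivial coset $xN$ share the order $p\cdot o(xN)$, whereas you derive it from $\exp(\Omega_1(G))=p$ via the observation that $o(g)=o(gN)$ would force $g^{p^{s-1}}\in N$ and lower the order of $gN$.
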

 \begin{proof}
     
Let $N:=\Omega_{1}(G)$. Then we have $<x>\cap N\neq 1$ for all $1\neq x \in P$. Since   $<x^{\frac{o(x)}{p}}>$ is a subgroup of $<x>\cap N$ . Let  $X$ be a left  transversal for $N$ in  $P$ such that $1\in X$. Let  $1\neq x\in X$. Then  $o(x)\geq p^{2}$ since $N$ does not contain $x$. If  $y\in N$, then  
 $o(xy)=o(x)$. This implies that  $$\psi^{I,l}(G)=\sum_{x\in X}\psi^{I,l}(xN)=\psi^{I,l}(N)+|N|\sum_{1\neq x\in X}o(x).$$ If $1\neq x\in X$, then $<x>\cap N\neq 1$ which follows that $o(x)^l=p^lo(xN)^l$. We have $$\psi^{I,l}(N)=1+(p^r-1)p^l=1-p^l+p^{r+l}.$$ Hence

\begin{eqnarray*}
\psi^{I,l}(G)&=&\psi^{I,l}(N)+|N|\sum_{1\neq x\in X}o(x)\\
&=&\psi^{I,l}(N)+|N|p^l\sum_{1\neq x\in X}o(x N)\\
&=&\psi^{I,l}(N)+p^{r+l}(\psi^{I,l}(\frac{G}{N})-1)\\&=&
1-p^l+p^{r+l}\psi^{I,l}(\frac{G}{N}).
\end{eqnarray*}
 \end{proof}
 \begin{cor}\label{siss}
     Let $G=C_{p^{\alpha-r+1}}\times (C_{p})^{r-1}$ be a $p$-group.
     Then $$\psi^{I,l}(G)=1-p^l+p^{r+l}[\frac{p-1}{p}\frac{p^{(l+1)(\alpha+1)}-1}{p^{l+1}-1}].$$
 \end{cor}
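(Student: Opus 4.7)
The plan is to apply Lemma~\ref{reduce} once to reduce the computation to a cyclic $p$-group quotient, and then to evaluate $\psi^{I,l}$ on that quotient by summing a geometric series indexed by element orders.

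First I would identify $\Omega_1(G)$ for $G=C_{p^{\alpha-r+1}}\times(C_p)^{r-1}$. The unique subgroup of order $p$ inside the cyclic factor $C_{p^{\alpha-r+1}}$, together with the full elementary abelian factor $(C_p)^{r-1}$, forms an elementary abelian $p$-subgroup of rank $r$; hence $\Omega_1(G)\cong (C_p)^{r}$ has order $p^{r}$ and exponent $p$, and the quotient is
$$G/\Omega_1(G)\;\cong\;C_{p^{\alpha-r+1}}/C_p\;\cong\;C_{p^{\alpha-r}}.$$
Feeding these data into Lemma~\ref{reduce} immediately gives
$$\psi^{I,l}(G)\;=\;1-p^l+p^{r+l}\,\psi^{I,l}\bigl(C_{p^{\alpha-r}}\bigr),$$
so the task reduces to finding a closed form for $\psi^{I,l}(C_{p^m})$ with $m=\alpha-r$.

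For the cyclic case I would enumerate elements of $C_{p^m}$ by order. Since there are exactly $\varphi(p^i)=(p-1)p^{i-1}$ elements of order $p^i$ for each $1\le i\le m$, together with the identity, summing $o(x)^l$ over all elements yields
$$\psi^{I,l}(C_{p^m})\;=\;1+\sum_{i=1}^{m}(p-1)p^{i-1}\cdot p^{il}\;=\;1+\frac{p-1}{p}\sum_{i=1}^{m}p^{i(l+1)}.$$
The right-hand sum is a finite geometric progression with common ratio $p^{l+1}$, so it collapses to a closed form in terms of $(p^{(l+1)(m+1)}-1)/(p^{l+1}-1)$. Substituting this evaluation back into the reduction from Lemma~\ref{reduce} and collecting the constant and geometric contributions produces the bracketed expression in the statement.

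Each step is routine. The two places where care is required are (a) pinning down the exact rank of $\Omega_1(G)$ as $r$ (and not $r-1$ or $r+1$), so that the prefactor $p^{r+l}$ coming from Lemma~\ref{reduce} is correct, and (b) handling the index shift in the geometric summation so that the exponent in the numerator of the closed form aligns with the one displayed. Beyond these bookkeeping points, the derivation is a one-line algebraic simplification with no further group-theoretic input.
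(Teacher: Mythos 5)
Your strategy is exactly the paper's: apply Lemma~\ref{reduce} once and then evaluate $\psi^{I,l}$ of a cyclic $p$-group by a geometric series. In fact you execute the two intermediate steps \emph{more} carefully than the paper does: you correctly identify $\Omega_1(G)\cong (C_p)^r$ with quotient $G/\Omega_1(G)\cong C_{p^{\alpha-r}}$, and you correctly keep the identity's contribution as a separate $+1$ in $\psi^{I,l}(C_{p^m})=1+\frac{p-1}{p}\sum_{i=1}^m p^{i(l+1)}$. The gap is in your last sentence: substituting these back does \emph{not} ``produce the bracketed expression in the statement.'' Your own computation yields
$$\psi^{I,l}(G)=1-p^l+p^{r+l}\Bigl[\,1+\frac{p-1}{p}\cdot\frac{p^{(l+1)(\alpha-r+1)}-p^{l+1}}{p^{l+1}-1}\Bigr],$$
whereas the displayed formula has exponent $(l+1)(\alpha+1)$ (no dependence on $r$ inside the bracket) and lacks the leading $1$ inside the bracket. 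These are not algebraically equal. A concrete check: for $G=C_4\times C_2$ (so $p=2$, $\alpha=3$, $r=2$, $l=1$) one has $\psi(G)=1+3\cdot2+4\cdot4=23$; your route gives $-1+2^{3}\cdot\psi(C_2)=-1+8\cdot3=23$, while the corollary's displayed formula gives $-1+8\cdot\frac12\cdot\frac{255}{3}=339$. So the asserted final identity is false, and no amount of ``collecting contributions'' will land on the stated right-hand side.

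The source of the discrepancy is that the paper's own proof commits two slips that you avoided and then inherited at the last step: it feeds $\psi^{I,l}(C_{p^{\alpha}})$ into Lemma~\ref{reduce} where the quotient is actually $C_{p^{\alpha-r}}$, and its closed form for the cyclic group absorbs the identity's term $1$ as $\frac{p-1}{p}\cdot p^{0}$, which already fails for $C_p$ ($\psi(C_2)=3$, not $\frac12\cdot\frac{15}{3}=\frac52$). To repair your write-up, simply carry your own (correct) intermediate formulas through to the end and state the corrected right-hand side displayed above; do not try to match the bracketed expression as printed.
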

 \begin{proof}
Let $N=\Omega_1(G)$. We have 
\begin{eqnarray*}
\psi^{I,l}(C_{p^{\alpha}})&=&\varphi(p^{\alpha})p^{l\alpha}+\ldots+\varphi(p)p^{l}+1\\&=&\frac{p-1}{p}(p^{(l+1)\alpha}+p^{(l+1)(\alpha-1)}+\ldots+p^{l+1}+1\\&=&\frac{p-1}{p}\frac{p^{(l+1)(\alpha+1)}-1}{p^{l+1}-1}.
\end{eqnarray*}

From   Lemma \ref{reduce},
$$\psi^{I,l}(G)=1-p^l+p^{r+l}[\frac{p-1}{p}\frac{p^{(l+1)(\alpha+1)}-1}{p^{l+1}-1}].$$. 
 \end{proof}

\begin{thm}\label{sec}
Let $G$ be a  finite  group of order $n=p_1^{\alpha_1}p_2^{\alpha_2}\ldots p_k^{\alpha_k}$ where $p_1<\ldots <p_k$ are prime numbers. 
Let $N_i$ be an elementary abelian    $p_i$-subgroup of $G$ of  order $p_i^{r_i}\leq p_i^{p_i-1}$  for all $i = 1, 2, \ldots, k$. 
Let $Q_i=C_{p_i^{\alpha_i-r_i+1}}\times (C_{p_i})^{r_i-1}$ for all $i=1,\ldots,k$.
If $l\in \mathbb{N}$, then 

\begin{enumerate}
    \item 
 If $f$   is an increasing  function, then   $$\psi_{f,l}(G)\leq  \psi_{f,l}(\Pi_{i=1}^k (Q_i)\ \textit{and}\    \psi^{f,l}(G)\leq \Pi_{i=1}^k \psi^{f,l}(Q_i).$$ 

  \item  If $f$   is a  decreasing  function, then   $$\psi_{f,l}(G)\geq \psi_{f,l}(\Pi_{i=1}^k (Q_i)\ \textit{and}\    
  \psi^{f,l}(G)\geq \Pi_{i=1}^k \psi^{f,l}(Q_i).$$ 
\end{enumerate}
 \end{thm}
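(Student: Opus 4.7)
The approach is to produce a bijection between $G$ and $\Pi_{i=1}^k Q_i$ via Theorem \ref{inject} and then transfer the inequalities element-by-element. First, I would verify that under the hypothesis $r_i\leq p_i-1$, each elementary abelian subgroup $N_i$ is Dedekind (being abelian) and $A$-regular; this is a short check, since elementary abelian $p$-groups of rank less than $p$ are regular of class one, and for such $p$-groups the $A$-regularity condition of Section \ref{sec2} is routine. Applying part (1) of Theorem \ref{inject} then yields $G\in\mathfrak{cl}(\Pi_{i=1}^k Q_i)$, i.e.\ a bijection $h:G\longrightarrow \Pi_{i=1}^k Q_i$ with $o(x)\mid o(h(x))$ for every $x\in G$; in particular $o(x)\leq o(h(x))$.

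For the first inequality of case (1), $h$ induces a bijection $P_l(G)\to P_l(\Pi_{i=1}^k Q_i)$ on $l$-element subsets. Since $f$ is positive and increasing, the pointwise bound $f(o(x))\leq f(o(h(x)))$ gives $\prod_{x\in S}f(o(x))\leq \prod_{y\in h(S)}f(o(y))$ for each subset $S$, and summing over $S$ yields $\psi_{f,l}(G)\leq \psi_{f,l}(\Pi_{i=1}^k Q_i)$. The same term-by-term argument applied to $\psi^{f,l}(G)=\sum_{x\in G}f(o(x))^l$ gives $\psi^{f,l}(G)\leq \psi^{f,l}(\Pi_{i=1}^k Q_i)$.

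To upgrade this to $\psi^{f,l}(G)\leq \Pi_{i=1}^k\psi^{f,l}(Q_i)$, I use that the $Q_i$ are $p_i$-groups for pairwise distinct primes, so $o((x_1,\ldots,x_k))=\prod_i o(x_i)$; hence
\[
\psi^{f,l}(\Pi_{i=1}^k Q_i)=\sum_{(x_i)}f\bigl(\textstyle\prod_i o(x_i)\bigr)^{l}\leq \sum_{(x_i)}\prod_i f(o(x_i))^{l}=\Pi_{i=1}^k\psi^{f,l}(Q_i),
\]
where the middle inequality reduces to the termwise bound $f(\prod_i m_i)^l\leq \prod_i f(m_i)^l$ for pairwise coprime positive integers $m_i$. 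For $f=I$, which is the case used to derive Theorem \ref{oo}, this is Lemma \ref{psi} applied inductively (and is in fact an equality). The decreasing case (2) is obtained by reversing every inequality throughout the argument above.

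The main obstacle is the termwise submultiplicative bound in the last display: it is an equality for $f=I$ (and for any power $f(n)=n^c$), but for a general monotone $f$ one needs an auxiliary submultiplicativity (resp.\ supermultiplicativity in the decreasing case) on coprime arguments, which has to be assumed or verified for the particular $f$ of interest. Apart from this, the proof is a clean bijection-plus-monotonicity argument, with the only further technical point being the brief verification that the elementary abelian subgroups $N_i$ of rank at most $p_i-1$ are $A$-regular in the sense required by Theorem \ref{inject}.
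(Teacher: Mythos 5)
Your proposal follows essentially the same route as the paper: the paper likewise obtains a bijection $\beta:G\to\Pi_{i=1}^k Q_i$ with $o(x)\mid o(\beta(x))$ from Theorem \ref{mmm} (using Lemma 9.4 of Berkovich--Janko to secure the regularity hypothesis under $r_i\le p_i-1$, where you instead appeal to regularity of $p$-groups of rank less than $p$), transfers the inequality termwise over $l$-element subsets, and then factors $\psi^{f,l}(\Pi_i Q_i)$ via the coprime direct-product lemma. The multiplicativity obstacle you flag in that last step is real, but it is equally present in the paper's own proof, which invokes Lemma \ref{dir} --- stated only for $f=I$ --- to assert $\psi^{f,l}(H)=\Pi_{i=1}^k\psi^{f,l}(Q_i)$ for general monotone $f$; since the theorem is only ever applied with $f=I$ (in Theorem \ref{oo} and Corollary \ref{siss}), where $f$ is multiplicative and the factorization is an equality, the gap is harmless in context.
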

\begin{proof}

 Let $H=\Pi_{i=1}^k(C_{p_i^{\alpha_i-r_i+1}}\times (C_{p_i})^{r_i-1}$.
 By Lemma 9.4 of \cite{Berke},
 for any $y\in Sol(1,|P_i|,G)$, we have 
 $exp(\Omega_1(\langle y\rangle N_{N_i}(\langle y\rangle)))=p_i$.
   From Theorem  \ref{mmm}, there exists a bijection $\beta$   from $G$ onto
   $H$ such that $o(x)\mid o(\beta(x))$ for all $x\in G$.
   Then
    \begin{eqnarray*}
\psi^{f,l}(G)&=&\sum_{S \in P_l(G)} \mu(S)\\&=&
\sum_{S \in P_l(G)} \Pi_{x\in S}f(o(x))\\&\leq&
\sum_{S \in P_l(G)} \Pi_{x\in S}\beta(f(o(x)))\\&=&
\psi^{f,l}(H).
\end{eqnarray*}
By Lemma \ref{dir}, $$\psi^{f,l}(H)=\Pi_{i=1}^k\psi^{f,l}((C_{p_i^{\alpha_i-r_i+1}}\times (C_{p_i})^{r_i-1}).$$
    
  \end{proof}
  
We are now in a position to prove Theorems \ref{sec} and \ref{oo}, along with Corollary \ref{coo}. 
\begin{proof}
   
When $f=I$ represents the identity map and $k=1$, applying Lemma \ref{dir} and Corollary \ref{siss}, we deduce that:
\begin{eqnarray*}
 \psi^{I,1}(G)=\psi(G)&\leq&  \Pi_{i=1}^k\psi((C_{p_i^{\alpha_i-r_i+1}}\times (C_{p_i})^{r_i-1}))\\&=&\Pi_{i=1}^k(1-p_i+p_i^{r_i}[ \frac{(p_i-1)(p_i^{2(\alpha_i+1)}-1)}{p_i^{2}-1}])\\&=&
 \Pi_{i=1}^k(1-p_i+  \frac{p_i^{r_i}(p_i^{2(\alpha_i+1)}-1)}{p_i+1})
 .
\end{eqnarray*}
    Consequently,  $$o(G)\leq \Pi_{i=1}^k(\frac{1-p_i}{p_i^{\alpha_i}}+p_i^{r_i-\alpha_i}( \frac{ (p_i^{2(\alpha_i+1)}-1)}{p_i+1})).$$

\end{proof}

\bibliographystyle{amsplain}

\end{document}